%
\documentclass[12pt]{article}
\usepackage[amsmath]{e-jc}
\usepackage{biblatex} 
\addbibresource{PHF_ejc.bib} 
\newcommand{\Z}{\ensuremath{\mathbb{Z}}}

\newcommand{\C}{\ensuremath{\mathbb{C}}}
\newcommand{\VE}{\ensuremath{\mathcal{V}}}
\newcommand{\N}{\ensuremath{\mathbb{N}}}
\newcommand{\R}{\ensuremath{\mathbb{R}}}
\newcommand{\Q}{\ensuremath{\mathbb{Q}}}
\newcommand{\bigslant}[2]{{\raisebox{.2em}{$#1$}\left/\raisebox{-.2em}{$#2$}\right.}}
\newcommand{\lap}{\ensuremath{\mathcal{L}}}
\usepackage{mathrsfs}
\emergencystretch=1em
\usepackage{mathtools}
\usepackage{enumitem} 
\usepackage{graphicx}
\usepackage{subcaption}
\captionsetup{compatibility=false}
\usepackage{etoolbox}
\apptocmd{\sloppy}{\hbadness 10000\relax}{}{}
\setlength{\parindent}{0pt}
\numberwithin{equation}{section}


\usepackage{graphicx}




\dateline{2.11.2022}{TBC}{TBD}

\MSC{31A30, 05C81, 60J65, 30E25, 05A15}

%
%
\Copyright{The authors. Released under the CC BY-ND license (International 4.0).}

\title{Polyharmonic Functions in the Quarter Plane}


\author{Andreas Nessmann\thanks{This project has received funding from the European Research Council (ERC) under the European Union's Horizon 2020 research and innovation programme under the Grant Agreement No. 759702.}\\
\small Institut für Diskrete Mathematik und Geometrie,\\[-0.8ex] 
\small Technische Universität Wien, Vienna, Austria\\
\small Institut Denis Poisson,\\[-0.8ex] 
\small Université de Tours, Tours, France\\
\small\tt andreas.nessmann@tuwien.ac.at}
\begin{document}

\maketitle


\begin{abstract}

  While discrete harmonic functions have been objects of interest for quite some time, this is not the case for discrete polyharmonic functions, as appear for instance in the asymptotics of path counting problems. In this article, a novel method to compute all discrete polyharmonic functions in the quarter plane for non-singular models with small steps and zero drift is proposed. In case of a finite group, an alternative method using decoupling functions is given, which often leads to a basis consisting of rational functions. In a similar manner one can obtain polyharmonic functions in the continuous setting, and convergence between the discrete and continuous cases is proven. Lastly, using a concrete example it is shown why the decoupling approach seems not to work in the infinite group case.
\end{abstract}

  I would like to thank Michael Drmota and Kilian Raschel for introducing me to this topic, and especially the latter for many helpful and enlightening discussions.
  
\section{Introduction and Motivation}
Suppose we are given a weighted step set $\mathcal{S}\subseteq \{-1,0,1\}^2$, and we want to count the (weighted) number $q(0,x;n)$ of excursions in the quarter plane $\Z_{\geq 0}\times\Z_{\geq 0}$ of length $n$ from the origin to some point $x=(i,j)$. For the simple walk for instance, we have $\mathcal{S}=\left\{\uparrow,\rightarrow,\downarrow,\leftarrow\right\}$, where each step has weight $\frac{1}{4}$. In this case, the number $q(0,x;n)$ can be computed explicitly (see e.g. \cite{MBM2}) via \begin{align}\label{eq:SW_exact}
    q(0,x;n)=4^{-n}\frac{(i+1)(j+1)n!(n+2)!}{m!(m+i+1)!(m+j+1)!(m+i+j+2)!},
\end{align}
where $m=\frac{n-i-j}{2}$ is an integer, and $0$ otherwise (as the quarter plane is bipartite). It is now fairly natural to ask about asymptotics of this expression, or more generally about asymptotics of the number $q(0,x;n)$ for an arbitrary step set $\mathcal{S}$. In particular, we consider (as proposed in \cite{Poly}) asymptotic expansions such that there is a (strictly) increasing sequence $(\alpha_p)$ such that for all $p$ we have (again, up to some coefficients possibly vanishing due to parity considerations) \begin{align}\label{eq:asymptotic}
q(0,x;n)=\gamma^n\left(\sum_{p=1}^k\frac{v_p(x)}{n^{\alpha_p}}+\mathcal{O}\left(\frac{1}{n^{\alpha_{k+1}}}\right)\right).
\end{align}
In case of the simple walk, (\ref{eq:SW_exact}) allows us to directly compute $\alpha_p=p+2$ and \begin{align}
    \label{eq:as_SW_1}v_1=&(i+1)(j+1),\\
    v_2=&(i+1)(j+1)(15 + 4 i + 2 i^2 + 4 j + 2 j^2),\\
    \label{eq:as_SW_2}
    \begin{split}v_3=&(i + 1) (j + 1) (317 + 16 i^3 + 4 i^4 + 168 j + 100 j^2 + 16 j^3 \\
    &   +4j^4+ 8 i (21 + 4 j + 2 j^2) + 4 i^2 (25 + 4 j + 2 j^2)).
   \end{split}
\end{align} 
It should be explicitly noted at this point that expansions of the form (\ref{eq:asymptotic}) are not proven to exist for this type of problem. While for the simple walk and a few other examples (e.g.~the diagonal walk, tandem walk, see \cite{MBM}) this can be shown using an explicit representation similar as (\ref{eq:SW_exact}), in general it is not so clear. One-term expansions of this form have been proven for many cases in \cite{Denisov}, and more recently, using multivariate analytic techniques, in \cite[Thm.~1]{Universality}, \cite[6.1]{ACSV}, while higher order asymptotics for the one-dimensional case have been shown in \cite{WachtelPhD}.\\
It is now fairly natural to ask how the asymptotics depend on the finishing point of our paths, which is to ask about the properties of the $v_p$: whether they necessarily have a particular structure, if there is a clear relation to our chosen step set, and how to compute them. It is a very recent observation from the extended abstract \cite{Poly} that each $v_p$ must be a polyharmonic function of order $p$, which to a large extent answers the first two questions. This can be shown by utilizing a recursive relation between the $q(0,x;n+1)$ and $q(0,x;n)$, and showing that each function $v_p$ must be what is called a discrete polyharmonic function of order $p$. This article aims to take a closer look at the structure of these functions and give a method to construct them, which to the author's knowledge has not been done before, albeit in \cite{Poly} the authors were able to compute some biharmonic functions using a guessing approach which will be more closely examined in Section~\ref{sec:guessing}.\\
In the continuous case, given a covariance matrix $\Sigma = \begin{pmatrix} \sigma_{11} & \sigma_{12}\\ \sigma_{12} & \sigma_{22}\end{pmatrix}$, we call a function $f$ polyharmonic of degree $p$ if it is a solution of \begin{align}\label{eq:defPHF}
    \triangle^p f=0,
\end{align}
where $\triangle$ is the Laplace-Beltrami operator $\triangle = \frac{1}{2}\left(\sigma_{11}\frac{\partial^2}{\partial x^2}+2\sigma_{12}\frac{\partial ^2}{\partial x\partial y}+\sigma_{22}\frac{\partial^2}{\partial y^2}\right)$. These kinds of functions have already been studied in the late 19th century, notably by E.~Almansi, who proved in \cite{Almansi} that in a star-shaped domain containing the origin, any polyharmonic function of degree $n$ can be written as \begin{align}
    f(x)=\sum_{k=0}^n |x|^{2k}h_k(x),
\end{align} 
where the $h_k$ are harmonic (polyharmonic of degree $1$). In particular harmonic and biharmonic functions have by now seen plenty of applications in physics, see e.g. \cite{Elasticity}.\\
The discrete setting on the other hand has gained interest comparably recently. A (discrete) function defined on a graph is called polyharmonic if it satisfies (\ref{eq:defPHF}) as well, but with a discretised version of the Laplacian. For this discretisation, given transition probabilities $p_{x,y}$ from any point $x$ to any point $y$, one lets \begin{align}
    \triangle f(x)=\sum_{y}p_{x,y}f(y)-f(x).
\end{align}
There have been some results on polyharmonic functions on trees recently \cite{Trees2,Trees1}, and polyharmonic functions on subdomains of $\mathbb{Z}^d$ have become an object of interest linked in particular to the study of discrete random walks.  In our case, this subdomain will be the quarter plane and our walk homogeneous inside of it, i.e.~the transition probabilities $p_s:=p_{x,x+s}$, where the steps $s$ are given by the set $\mathcal{S}$ of allowed steps, will be independent of $x$. The discrete Laplacian thus reads \begin{align}\label{eq:intro_1}
    \triangle f(i,j)=\sum_{(u,v)\in\mathcal{S}}p_{u,v}f(i+u,j+v)-f(i,j).
\end{align}
One can immediately verify that the functions given by (\ref{eq:as_SW_1})-(\ref{eq:as_SW_2}) are indeed polyharmonic of degrees $1,2,3$ respectively. It is not at all obvious, however, how polyharmonic functions in general can be found. In \cite{Conformal}, a way to construct harmonic functions for zero-drift models with small steps via a boundary value problem is given. This is utilized in \cite{Hung} to give a complete description of harmonic functions for symmetric step sets with small negative steps, which has since been extended to results for the non-symmetric case in \cite{Hung2}. The methods used in the latter two articles can be applied to compute harmonic functions in the setting considered here (i.e. small steps, zero drift, non-degenerate models; see Section~\ref{sec:DiscretePHF}) with only minor adjustments. Very recently in the extended abstract \cite{Poly}, the authors outline a way to compute biharmonic functions. They utilize a guessing approach, which works -- with some restrictions -- in the finite group setting, which will also be discussed in Section~\ref{sec:guessing}. Their main idea in doing so is to find a so-called `decoupling function', which was first introduced by W.~T.~Tutte in \cite{Tutte2}, and is discussed further in \cite{Tutte}. There, this concept is utilized to give remarkably succinct proofs of the algebraicity (or D-algebraicity) of the counting function of some models in the quarter plane. While in \cite{Poly} the authors guessed decoupling functions for some concrete examples, it is possible to show for finite group models that they always exist, and describe them explicitly. This will be done in Section~\ref{sec:Decoupling}.\\
What all these articles have in common and will be the same here is that instead of working directly with a polyharmonic function $h(i,j)$, they consider its generating function $H(x,y):=\sum_{i,j}x^{i+1}y^{j+1}h(i,j)$. The main reason to do so is the functional equation \begin{small}
\begin{align}
    \label{eq:FE} K(x,y)H(x,y)&=K(x,0)H(x,0)+K(0,y)H(0,y)-K(0,0)H(0,0)-xy\left[\triangle H\right](x,y),
\end{align}
\end{small}\noindent
which can be shown by straightforward computation to be satisfied by this generating function (note that we have $\triangle H=0$ for harmonic $H$). Here, $K(x,y)$, which will be defined in Section~\ref{sec:Prelims}, is a the same kernel that usually appears in the study of random walks except with the directions reversed. Note that the functional equations for counting walks or the stationary distribution look strikingly similar, see e.g. \cite{Book,MBM}.
\medbreak
The goal of this article is to give an overview of the structure and an algorithm to compute all discrete polyharmonic functions for walks with small steps and zero drift in the quarter plane. If the group is finite, then one can expand on the idea in \cite{Poly} and use decoupling functions, resulting in many cases in a particularly nice basis consisting of rational functions of a fairly simple shape. In this case, one also obtains a direct link to continuous polyharmonic functions. \newline
The structure of this article will be roughly as follows:
\begin{itemize}
\item In Section~\ref{sec:Prelims}, a short introduction to the setting as well as a quick overview of some tools that will be utilized is given.
\item In Section~\ref{sec:DiscretePHF}, some general properties of discrete polyharmonic functions will be stated.
\item In Section~\ref{sec:generalSolution}, a general algorithm to construct discrete polyharmonic functions is presented (Thm.~\ref{thm:ConstructPHF}), and it is shown that all possible discrete polyharmonic functions can be constructed in this manner (Thm.~\ref{thm:AllPHF_generalcase}).
\item In Section~\ref{sec:continuousPHF}, an analogue of the latter method in the continuous case is presented, the relation between the discrete and continuous functional equations as well as convergence in terms of generating functions and Laplace transforms are discussed. 
\item In Section~\ref{sec:Decoupling}, an alternative construction utilizing decoupling functions is presented, which is applicable to models with finite group only (Thm.~\ref{thm:buildPHF}). This method leads, provided a certain parameter is integer, to a (Schauder) basis consisting of rational functions. This construction is then translated to the continuous setting, and convergence properties are shown (Thm.~\ref{thm:Convergence}).
\item In Section~\ref{sec:guessing}, the guessing approach mentioned (but not detailed) in \cite{Poly} is discussed, in which one uses an ansatz to try and find suitable decoupling functions. In particular, we deduce that the guessing approach works to decide if there is a decoupling function of a reasonably nice shape.
\item In Section~\ref{sec:pitheta2}, a special case is examined in more detail, to show the relation between the two approaches and in particular why it does not appear promising to extend the notion of decoupling to the infinite group case. 
\item Lastly, Section~\ref{sec:Outlook} gives a brief overview of some open questions.
\end{itemize}
This article is the complete version of the extended abstract \cite{AofA}.

\section{Preliminaries}\label{sec:Prelims}
The following only serves as a very brief overview; for a more thorough introduction see e.g. \cite{Conformal, Book}. Consider a homogeneous\footnote{That is, the probability to jump from a point $x$ to a point $y$ depends on $y-x$ only.} random walk in $\mathbb{Z}\times\mathbb{Z}$ with a step set $\mathcal{S}$ and transition probabilities $p_{i,j}$. From now on, we will make the following assumptions:
\begin{enumerate}[label=(\roman*)]
    \item The walk consists of small steps only, i.e. $\mathcal{S}\subseteq\{-1,0,1\}^2$.
    \item The walk is non-degenerate, that is, the list $p_{1,1}$,$p_{1,0}$,$p_{1,-1}$,$p_{0,-1}$,$p_{-1,-1}$,$p_{-1,0}$,$p_{-1,1}$,$p_{0,1}$ does not contain three consecutive $0$s. 
    \item The walk has zero drift, meaning that $\sum_{(i,j)\in\mathcal{S}}ip_{i,j}=\sum_{(i,j)\in\mathcal{S}}jp_{i,j}=0.$
\end{enumerate}
A standard object appearing in a variety of functional equations around random walks (for example when one wants to compute a stationary distribution, or for counting walks, see e.g. \cite{Book}) is the \textit{kernel} of the walk, which is given by \begin{align}
    K(x,y)=xy\left(\sum_{(i,j)\in\mathcal{S}}p_{i,j}x^{-i}y^{-j}-1\right).
\end{align}

Note that this kernel slightly differs from the one used for counting walks: on top of the absence of the counting variable $t$, the directions of the steps is reversed (i.e.~the monomial $x^iy^j$ is paired with the probability $p_{-i,-j}$ here instead of the other way around, for instance in~\cite{MBM}). This can be intuitively explained by the fact that when counting walks they are commonly grouped by their previous step, whereas when looking at asymptotics we build our recursions by looking forward. In \cite{Book}, the kernel is examined quite thoroughly, and we will in the following state a few of their results. Note that they are not affected by reversing the directions of our steps.\\
As we consider non-degenerate walks with small steps, our kernel will necessarily be quadratic in both $x$ and $y$. Letting \begin{align}
\label{eq:kernelnot1}
    K(x,y)=a(x)y^2+b(x)y+c(x)= \tilde{a}(y)x^2+\tilde{b}(y)x+\tilde{c}(y),
\end{align}
we can use the quadratic formula to find solutions of $K(\cdot,y)=0$ and $K(x,\cdot)=0$, which are given by \begin{align}
    X_\pm (y)=\frac{-\tilde{b}(y)\pm\sqrt{\tilde{b}(y)^2-4\tilde{a}(y)\tilde{c}(y)}}{2\tilde{a}(y)},\quad
    Y_\pm(x)=\frac{-b(x)\pm\sqrt{b(x)^2-4a(x)c(x)}}{2a(x)}.
\end{align}
Letting $D(x):=b(x)^2-4a(x)c(x)$, then one can show \cite[2.5]{Conformal}, \cite[2.3.2]{Book} that given our particular case of zero drift, $D(x)=0$ has $3$ solutions: the double root $x=1$, a solution $x_1\in [-1,1)$, and a solution $x_4\in (1,\infty)\cup (-\infty, -1]$. Consequently, one can see that the discriminant is negative for $y\in[y_1,1]$, and therefore in this range we have $X_+(y)=\overline{X_-(y)}$. Analogous results hold for $\tilde{D}(y):=\tilde{b}(y)^2-4\tilde{a}(y)\tilde{c}(y)$. This is in particular used in the computation of harmonic functions, as in \cite{Conformal} or \cite{Hung}. The idea is to define the domain $\mathcal{G}$ as the area bounded by the curve $X_\pm\left([y_1,1]\right)$, and notice that the functional equation~(\ref{eq:FE}) leads to the boundary value problem \begin{align}
    K(x,0)H(x,0)-K(\overline{x},0)H(\overline{x},0)=0
\end{align}
on $\partial\mathcal{G}\setminus\{1\}$, while $K(x,0)H(x,0)$ is analytic in the interior of $\mathcal{G}$ and continuous on $\overline{\mathcal{G}}\setminus\{1\}$ (cf \cite{Poly,Conformal}). A few examples of what $\mathcal{G}$ can look like is given in Fig.~\ref{fig:gammas}; in particular the case where $\mathcal{G}$ is the unit disk will be examined in Section~\ref{sec:pitheta2}. 
\begin{figure}
\centering
\begin{subfigure}{.5\linewidth}
  \centering
  \includegraphics[width=.3\linewidth]{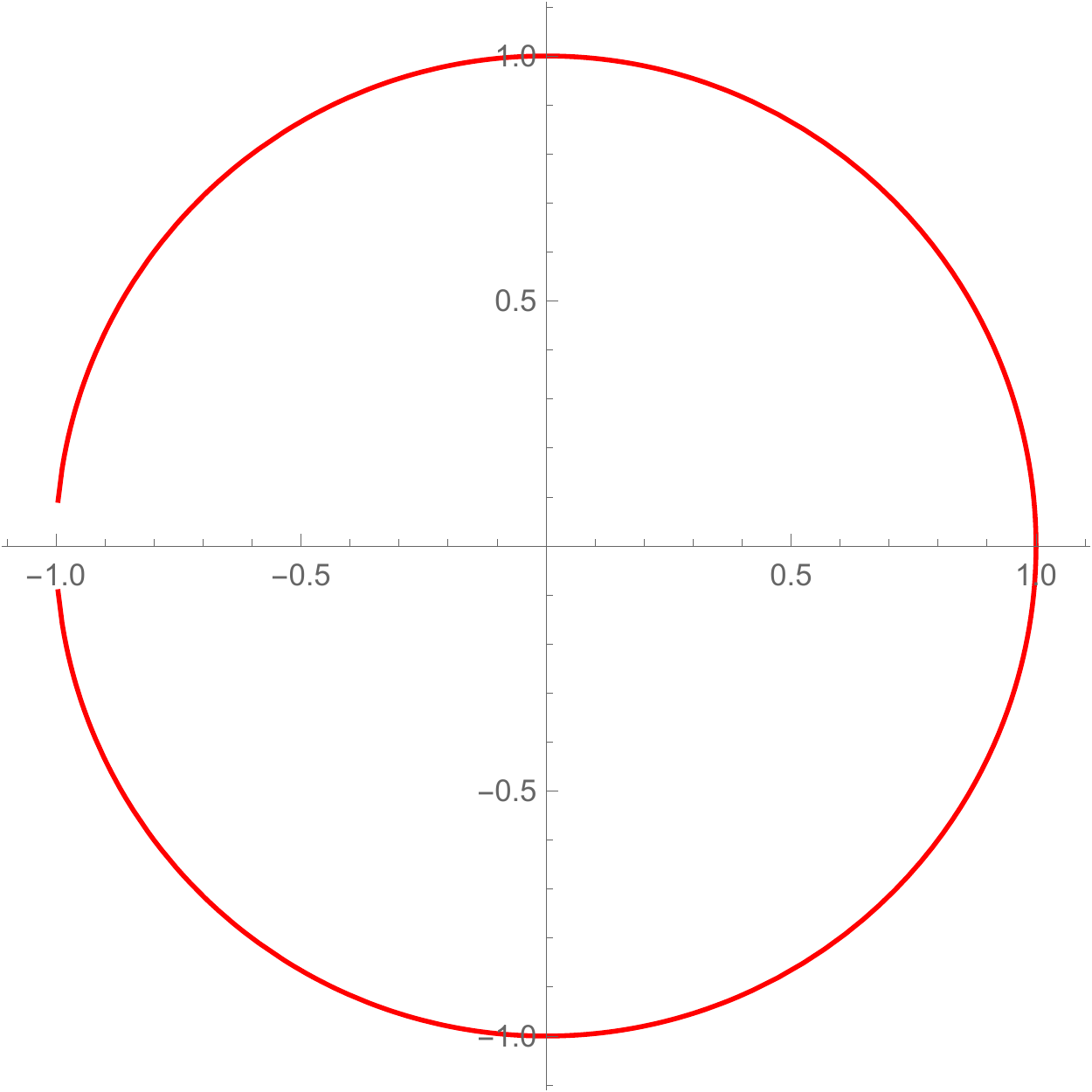}
  \caption{The simple walk: $\mathcal{S}=\{\rightarrow,\downarrow,\leftarrow,\uparrow\}$}
  \label{fig:sub1}
\end{subfigure}%
\begin{subfigure}{.25\linewidth}
  \centering
  \includegraphics[width=.5\linewidth]{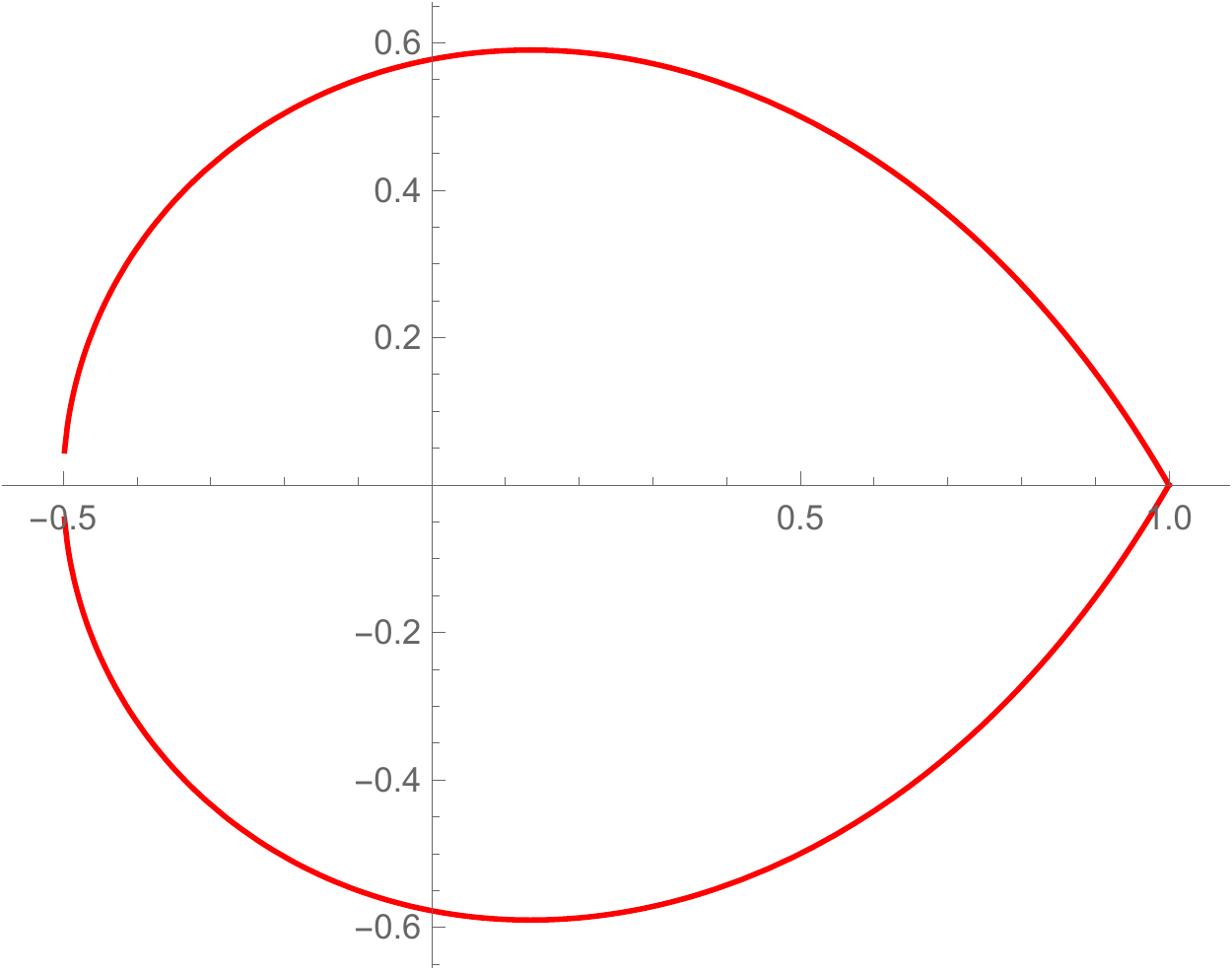}
  \caption{The tandem walk: $\mathcal{S}=\{\nwarrow,\rightarrow,\downarrow\}$}
  \label{fig:sub2}
\end{subfigure}
\begin{subfigure}{.4\textwidth}
  \centering
  \includegraphics[width=.4\linewidth]{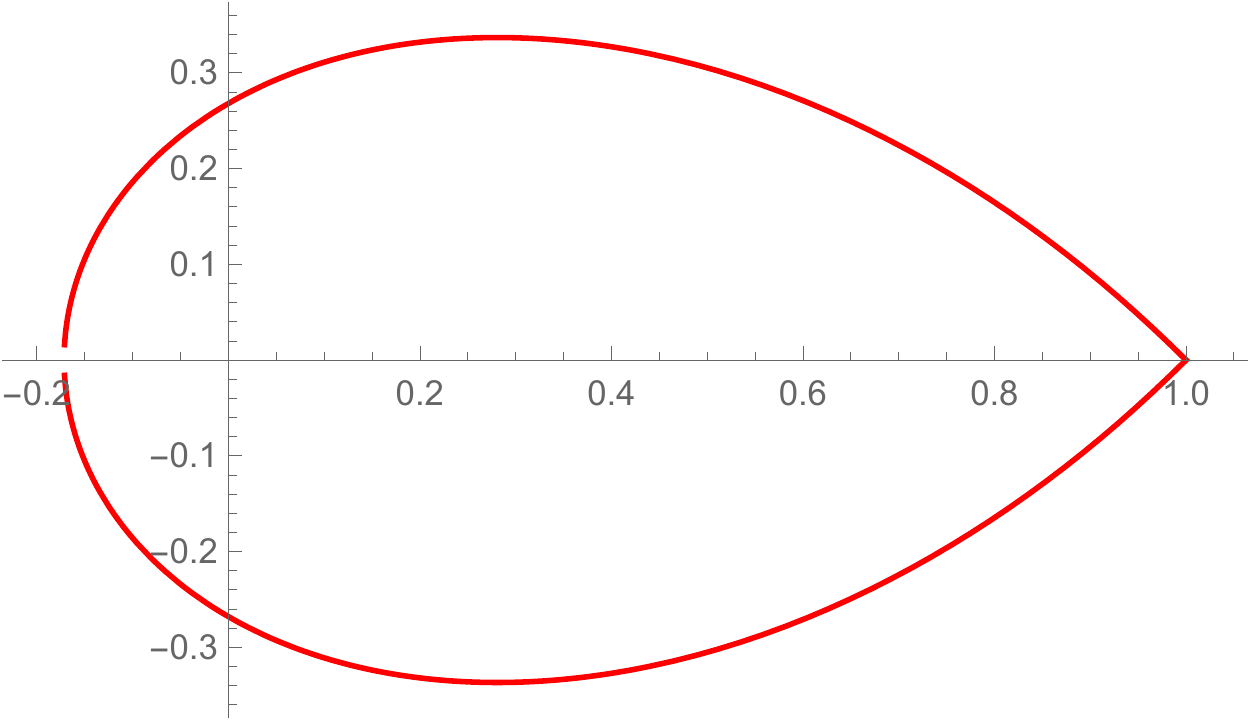}
  \caption{The Gouyou-Beauchamps walk: $\mathcal{S}=\{\leftarrow,\rightarrow,\nwarrow,\searrow\}$}
  \label{fig:sub3}
\end{subfigure}
\caption{The shape of the domain $\mathcal{G}$ for different (unweighted) models. The angle $\theta$ can be defined as the angle at which its boundary intersecs the real axis at the point $(1,1)$.}
\label{fig:gammas}
\end{figure}

In order to solve the above boundary value problem, one can construct a mapping $\omega: \mathbb{C}\to \bar{\mathbb{C}}$ which is a fundamental solution in the sense that any other solution can be written as some entire function applied to $\omega$. An explicit formula for $\omega$ as well as some additional informations are given in \cite{Conformal}. This $\omega$ satisfies \begin{align}\label{eq:wInv}
    \omega(0)&=0,\quad
    \omega(X_+(y))=\omega(X_-(y))\quad\forall y\in[y_1,1],\quad 
   \frac{\partial \omega}{\partial x}(x)\neq 0\quad\forall x\in\mathcal{G}^{\circ}.
\end{align}
In particular, $\omega$ is a conformal mapping of the domain $\mathcal{G}$. Furthermore, it has a pole-like singularity of order $\pi/\theta$ at $x=1$, where $\theta$ is the inner angle at which $\partial\mathcal{G}$ intersects the $x$-axis. Alternatively, $\theta$ can be computed via \begin{align}\label{eq:defTheta}
    \theta=\arccos\left(-\frac{\sum ij p_{i,j}}{\sqrt{\sum i^2 p_{i,j}}\sqrt{\sum j^2 p_{i,j}}}\right),
\end{align} see e.g.~\cite[2.15]{Conformal}. This angle $\theta$ happens to be closely related to the so-called group of the walk; whenever the group is finite, the ratio $\pi/\theta$ must be rational. This group will be defined in Section~\ref{sec:Decoupling} and is by now a standard object of interest in the combinatorics of lattice paths (see e.g.~\cite{MBM,MBM2}). Also, the angle $\theta$ is directly linked to the asymptotic growth of harmonic functions, see e.g.~\cite{Hung,Hung2}.\\
In the same manner as one has constructed the region $\mathcal{G}$, one can obtain a $\mathcal{G}'$ by swapping the roles of $x$ and $y$. While it is possible to construct a second conformal mapping $\widehat{\omega}$ for $\mathcal{G}'$ in the same fashion as $\omega$, one can also see that $\omega\circ X_+$ is a conformal mapping by \cite[Cor.~5.3.5]{Book}, and it has the same behaviour around $1$ as $\omega$. Finally, we note that due to (\ref{eq:wInv}), $\omega$ is an invariant in the sense of \cite[Def.~4.3]{Tutte}.

\section{Discrete Polyharmonic Functions}\label{sec:DiscretePHF}
Let $\VE$ be the quarter plane $\Z_{\geq 0}\times\Z_{\geq 0}$, with $\VE^\circ=\{(x_1,x_2)\in\VE: x_1,x_2\neq 0\}$ and $\partial\VE=\VE\setminus\VE^\circ$. For any function $f: \VE\to\C$, we define \begin{align}\label{eq:dfn_Laplacian}
\triangle f(x):=\sum_{s\in\mathcal{S}}p_s f(x+s) -f(x),\end{align}
where $x=(x_1,x_2)\in\VE$ and the sum $x+s$ is to be taken component-wise. 

\begin{definition}\label{def:Discrete_PHF}A function $f: \mathbb{Z}_{\geq 0}\times\mathbb{Z}_{\geq 0}\to \mathbb{C}$ is called \textbf{polyharmonic of degree $k$}, if \begin{align}
    \triangle^k f(x)&=0\quad\forall x\in\VE^\circ,\\
    f(x)&=0\quad\forall x\in\partial\VE.
\end{align}
We call $f$ \textbf{harmonic} if it is polyharmonic of degree $1$. 
\end{definition}
The Dirichlet condition is a direct consequence of the probabilistic interpretation given in the introduction; clearly there are no walks starting outside of the quarter plane which are always inside it. First of all, we will see that given combinatorial problems similar as the path counting one mentioned in the introduction, it is reasonable to expect polyharmonic functions to appear in the asymptotics. 
\begin{lemma}\label{lemma:asymptotics_PHF}
Let $q(x;n)$ be some combinatorial quantity depending on $n$ and a point $x\in\Z^d$, and let $\mathcal{S}\subset\Z^d$ be some step set. Furthermore, suppose that $q(x;n)$ has an asymptotic expansion of the form \begin{align}
    q(x;n)=\sum_{k=1}^\infty f_k(n)v_k(x),\\
    \end{align}
    where $\lim_{n\to\infty} \frac{f_{k+1}(n)}{f_k(n)}=0$ and $\lim_{n\to\infty}\frac{f_k(n+1)}{f_k(n)}=1$ for all $k\geq 1$,
    and that it satisfies a recursive relation of the form
    \begin{align}
    q(x;n+1)=\sum_{s\in\mathcal{S}}p_s q(x-s;n).
\end{align}
Then, for all $k\geq 1$, $v_k(x)$ is a polyharmonic function of degree $k$.
\end{lemma}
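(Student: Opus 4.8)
The plan is to substitute the asymptotic expansion directly into the recursive relation and match the resulting series term-by-term, using the two limit hypotheses on the $f_k$ to control the orders of magnitude. First I would write out $\triangle v_k(x) = \sum_{s\in\mathcal{S}} p_s v_k(x+s) - v_k(x)$ and observe that the recursion, rewritten with $x$ replaced by $x+s$ appropriately (or, more cleanly, by comparing $q(x;n+1)$ with $\sum_s p_s q(x-s;n)$ and then reindexing $x\mapsto x$ so that the shifted argument is $x+s$ inside the Laplacian), produces
\begin{align*}
\sum_{k=1}^\infty f_k(n+1) v_k(x) = \sum_{k=1}^\infty f_k(n)\Big(\sum_{s\in\mathcal{S}} p_s v_k(x+s)\Big) = \sum_{k=1}^\infty f_k(n)\big(v_k(x) + \triangle v_k(x)\big).
\end{align*}
Rearranging, $\sum_k \big(f_k(n+1)-f_k(n)\big) v_k(x) = \sum_k f_k(n)\,\triangle v_k(x)$.

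The core of the argument is then an induction on $k$. For the base case $k=1$: divide the rearranged identity by $f_1(n)$ and let $n\to\infty$. By the hypothesis $f_k(n+1)/f_k(n)\to 1$ the left-hand side contributes $\big(f_1(n+1)/f_1(n) - 1\big) v_1(x)\to 0$, and by $f_{k+1}(n)/f_k(n)\to 0$ every term with $k\ge 2$ on both sides vanishes in the limit (after dividing by $f_1$), so the leading term on the right forces $\triangle v_1(x) = 0$ for $x\in\VE^\circ$, i.e. $v_1$ is harmonic. The Dirichlet condition $v_k(x)=0$ on $\partial\VE$ should be read off from the corresponding boundary behaviour of $q(x;n)$ (the combinatorial quantity vanishes outside the region), which I would state as part of the setup. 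For the inductive step, assume $\triangle v_j = 0$ for $j < k$ is known to fail at degree $k-1$ but holds at degree... — more precisely, assume inductively that $\triangle^{j} v_j = 0$ for $j\le k-1$, equivalently that we have already identified how $\triangle v_j$ expresses in terms of lower-order $v_i$. The cleanest bookkeeping is: substitute the (inductively known) relation $\triangle v_j = -\,(\text{combination of } v_i,\ i<j)$ — which is what polyharmonicity of degree $j$ says once we know $\triangle^{j-1}(\triangle v_j)=0$ — and then, dividing the master identity by $f_k(n)$ and sending $n\to\infty$, all terms of order $f_i$ with $i\le k-1$ must cancel among themselves by the induction hypothesis, leaving $\triangle v_k(x)$ matched against a combination of $v_1,\dots,v_{k-1}$; applying $\triangle^{k-1}$ and using $\triangle^{j}v_j=0$ kills the right side, giving $\triangle^k v_k = 0$.

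The main obstacle is making the term-by-term comparison rigorous: the asymptotic expansion is a formal statement, so to extract $\triangle v_k = (\text{stuff})$ one must truncate at level $k+1$, bound the tail $\sum_{j>k} f_j(n) v_j(x) = o(f_k(n))$ pointwise in $x$ (which is exactly what $f_{k+1}/f_k\to 0$ gives, provided the expansion is genuinely asymptotic and not merely formal), and similarly estimate $f_k(n+1)-f_k(n) = o(f_k(n))$ from $f_k(n+1)/f_k(n)\to 1$ — note this last step needs $f_k(n+1)-f_k(n)$ to actually be $o(f_k(n))$, which is immediate, and crucially that $f_k(n+1)-f_k(n)$ is dominated by $f_j(n)$ for $j>k$ only in the sense of being $o(f_k)$; one has to be a little careful that $f_k(n+1)-f_k(n)$ is in fact absorbed correctly rather than generating a spurious term, and the natural way to handle it is to note it is $o(f_k(n))$ and that no surviving term of the same order appears, so it drops out in the limit. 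I would also remark that the identity holds for every fixed $x\in\VE^\circ$, so the conclusion $\triangle^k v_k(x)=0$ is pointwise on $\VE^\circ$, which together with the boundary vanishing is exactly Definition~\ref{def:Discrete_PHF}. A final technical point worth flagging: one should assume (or note it is automatic from the hypotheses) that the family $\{v_k\}$ is "independent enough" that matching orders is unambiguous — but in fact the argument above never needs linear independence of the $v_k$, only the growth separation of the $f_k$, so no extra assumption is required.
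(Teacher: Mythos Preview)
Your overall plan --- substitute the expansion into the recursion, use the growth hypotheses on the $f_k$ to isolate the leading term, and induct on $k$ --- is exactly the paper's strategy, and your base case is fine. The inductive step, however, takes a wrong detour: you write that ``$\triangle v_j = -(\text{combination of } v_i,\ i<j)$ --- which is what polyharmonicity of degree $j$ says''. That is not what it says. Knowing $\triangle^{j} v_j=0$ only tells you that $\triangle v_j$ is $(j-1)$-polyharmonic; it need \emph{not} lie in the linear span of the particular functions $v_1,\dots,v_{j-1}$, and the paper explicitly remarks after the lemma that one does not get $\triangle v_{k+1}=v_k$. So the substitution you propose to make the lower-order terms cancel is unjustified.

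The fix is the move you yourself mention at the very end: apply $\triangle^m$ \emph{before} trying to match orders. The paper does this directly to the recursion: applying $\triangle^m$ to both sides of $q(x;n+1)=\sum_s p_s q(x-s;n)$ and using the inductive hypothesis $\triangle^m v_p=0$ for $p\le m$ drops the first $m$ terms on each side outright, leaving sums starting at $k=m+1$; then divide by $f_{m+1}(n)$ and let $n\to\infty$, so that $f_{m+1}(n+1)/f_{m+1}(n)\to 1$ on the left and $f_k(n)/f_{m+1}(n)\to 0$ for $k>m+1$ on both sides, yielding $\triangle^m v_{m+1}(x)=\sum_s p_s\,\triangle^m v_{m+1}(x-s)$, i.e.\ $\triangle^m v_{m+1}$ is harmonic. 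No expression of $\triangle v_j$ in terms of lower $v_i$ is ever needed. If you prefer to start from your rearranged ``master identity'' $\sum_k(f_k(n+1)-f_k(n))v_k=\sum_k f_k(n)\triangle v_k$, applying $\triangle^m$ to \emph{that} and then dividing by $f_{m+1}(n)$ works equally well and gives $0=\triangle^{m+1}v_{m+1}$ directly; the point is that the higher power of $\triangle$ is what kills the lower-index terms, not any alleged linear relation among the $v_i$.
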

\begin{proof}
Suppose we already know that for $p=1,\dots, m$, $v_p$ is $p$-polyharmonic. We have \begin{align}
    \triangle^m q(x;n+1)&=\sum_{s\in\mathcal{S}}p_s\triangle^mq(x-s;n)\quad\Leftrightarrow \\
    \sum_{k=1}^\infty f_k(n+1)\triangle^m v_k(x)&=\sum_{s\in\mathcal{S}}\sum_{k=1}^\infty p_sf_k(n)\triangle^mv_k(x)\quad\Leftrightarrow\\
    \sum_{k=m+1}^\infty f_k(n+1)\triangle^m v_k(x)&=\sum_{s\in\mathcal{S}}\sum_{k=m+1}^\infty p_sf_k(n)\triangle^mv_k(x)\quad\Rightarrow\\
    \sum_{k=m+1}^\infty \frac{f_k(n+1)}{f_{m+1}(n)}\triangle^m v_k(x)&=\sum_{s\in\mathcal{S}}\sum_{k=m+1}^\infty p_s\frac{f_k(n)}{f_{m+1}(n)}\triangle^m v_k(x-s)\quad\Rightarrow\\
    \triangle^m v_{m+1}(x)&=\sum_{s\in\mathcal{S}}p_s \triangle^m v_{m+1}(x-s),
\end{align}
where in the last line we take the limit $n\to\infty$. This tells us that $\triangle^mv_{m+1}(x)$ is harmonic, which the same as $v_{m+1}(x)$ being $m+1$-polyharmonic.
\end{proof}
\textbf{Remark:} While this shows us that $v_k(x)$ is $k$-polyharmonic, it does $\textit{not}$ imply that $\triangle v_{k+1}(x)=v_k(x)$. Also, the above lemma covers in particular the case where the $f_k(n)$ are rational functions with decreasing degrees, e.g. $f_k(n)=n^{c-k}$.
\vskip\baselineskip
Denote in the following by $\mathcal{H}_n$ the space of discrete $n$-polyharmonic functions as defined in Def.~\ref{def:Discrete_PHF}, and by $\mathcal{H}:=\bigcup_{n\in\N}\mathcal{H}_n$ the space of all discrete polyharmonic functions. Clearly, $\mathcal{H}_n$ is a $\mathbb{C}$-vector space. Now, given any $\widehat{H}_n\in\mathcal{H}_n$, we can identify it with the sequence $\left(\widehat{H}_n,\widehat{H}_{n-1},\dots, \widehat{H}_1\right)$, where $\triangle\widehat{H}_{k+1}=\widehat{H}_k$, and $\triangle\widehat{H}_1=0$. It is clear that any such sequence is uniquely defined by the corresponding $\widehat{H}_n$. Now suppose that we have $\widehat{H}_n,\widehat{H}_n'\in \mathcal{H}_n$, such that, with their sequence representation as above, $\widehat{H}_1=\widehat{H}_1'$. In this case, we have \begin{align}
    \triangle^{n-1}\left[\widehat{H}_n-\widehat{H}_n'\right]=\widehat{H}_1-\widehat{H}_1'=0,
\end{align}
thus $\widehat{H}_n-\widehat{H}_n'\in\mathcal{H}_{n-1}$. Therefore, provided that for each $\widehat{H}_n\in\mathcal{H}_n$ we can find a corresponding $\widehat{H}_{n+1}\in\mathcal{H}_{n+1}$, which will be shown below in Thms.~\ref{thm:AllHF} and \ref{thm:AllPHF_generalcase}, one can prove the following lemma:
\begin{lemma}\label{lemma:PHFspaces_isomorphism}
Let $\mathcal{H}_n$ be the space of real-valued, discrete $n$-polyharmonic functions in the quarter plane. Then we have an isomorphy of vector spaces \begin{align}
    \mathcal{H}_n\cong \left(\mathcal{H}_1\right)^n.
\end{align}
\end{lemma}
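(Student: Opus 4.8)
The plan is to argue by induction on $n$, realising $\mathcal{H}_n$ at each step as an extension of $\mathcal{H}_{n-1}$ by $\mathcal{H}_1$ and splitting it off. The base case $n=1$ is immediate, so suppose $n\geq 2$ and $\mathcal{H}_{n-1}\cong(\mathcal{H}_1)^{n-1}$. The starting observation is that $\triangle$ sends $\mathcal{H}_k$ linearly into $\mathcal{H}_{k-1}$; in particular $\mathcal{H}_{n-1}$ is a linear subspace of $\mathcal{H}_n$, and $\widehat{H}_n\mapsto\widehat{H}_1:=\triangle^{n-1}\widehat{H}_n$ --- the last entry of the sequence representation discussed above --- is a well-defined $\R$-linear map $\Phi_n\colon\mathcal{H}_n\to\mathcal{H}_1$. (Here one reads Def.~\ref{def:Discrete_PHF} with the Dirichlet condition re-imposed on $\partial\VE$ after each application of $\triangle$, so that $\triangle$ genuinely lowers the polyharmonic degree; otherwise not even $\mathcal{H}_{n-1}\subseteq\mathcal{H}_n$ would hold.)

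Next I would show $\ker\Phi_n=\mathcal{H}_{n-1}$. One inclusion is precisely the computation displayed just before the statement: $\Phi_n(\widehat{H}_n)=\Phi_n(\widehat{H}_n')$ gives $\triangle^{n-1}(\widehat{H}_n-\widehat{H}_n')\equiv 0$ on $\VE^\circ$, and since $\widehat{H}_n-\widehat{H}_n'$ also vanishes on $\partial\VE$ it lies in $\mathcal{H}_{n-1}$, so $\ker\Phi_n\subseteq\mathcal{H}_{n-1}$. The reverse inclusion is clear, as every $g\in\mathcal{H}_{n-1}$ satisfies $\triangle^{n-1}g\equiv 0$, i.e.\ $\Phi_n(g)=0$. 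Hence $\Phi_n$ induces an injection $\mathcal{H}_n/\mathcal{H}_{n-1}\hookrightarrow\mathcal{H}_1$.

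It remains to check that $\Phi_n$ is surjective. Given $h\in\mathcal{H}_1$, I would invoke the construction results of Section~\ref{sec:generalSolution} (Thms.~\ref{thm:AllHF} and~\ref{thm:AllPHF_generalcase}), whose content is exactly that every $k$-polyharmonic function is $\triangle$ of some $(k+1)$-polyharmonic one: applying this repeatedly lifts $h=\widehat{H}_1$ to $\widehat{H}_2\in\mathcal{H}_2$, then $\widehat{H}_3\in\mathcal{H}_3$, and so on up to $\widehat{H}_n\in\mathcal{H}_n$ with $\triangle\widehat{H}_{k+1}=\widehat{H}_k$; telescoping these relations gives $\Phi_n(\widehat{H}_n)=\triangle^{n-1}\widehat{H}_n=h$. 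Consequently
\begin{align}
0\longrightarrow\mathcal{H}_{n-1}\hookrightarrow\mathcal{H}_n\xrightarrow{\ \Phi_n\ }\mathcal{H}_1\longrightarrow 0
\end{align}
is a short exact sequence of $\R$-vector spaces; as any such sequence splits, $\mathcal{H}_n\cong\mathcal{H}_{n-1}\oplus\mathcal{H}_1\cong(\mathcal{H}_1)^{n-1}\oplus\mathcal{H}_1\cong(\mathcal{H}_1)^n$, completing the induction.

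The only genuinely non-formal ingredient --- and what I expect to be the real obstacle --- is the surjectivity of $\Phi_n$, equivalently the fact that every discrete polyharmonic function lifts one degree higher; without it one obtains merely the embedding $\mathcal{H}_n/\mathcal{H}_{n-1}\hookrightarrow\mathcal{H}_1$. Note that no finiteness is needed, since short exact sequences of vector spaces split unconditionally, so the argument is unaffected by $\mathcal{H}_1$ being generically infinite-dimensional, though the resulting isomorphism is of course non-canonical, depending on a choice of splitting at each stage.
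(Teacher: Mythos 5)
Your proof is correct and follows essentially the same route as the paper: an induction in which $\mathcal{H}_n$ is split as an extension with successive quotients $\mathcal{H}_1$, with surjectivity (the ability to lift a polyharmonic function one degree higher) deferred to the construction results Thms.~\ref{thm:AllHF} and~\ref{thm:AllPHF_generalcase}, exactly as the paper does "in advance". The only (cosmetic) difference is that you realise the extension as $0\to\mathcal{H}_{n-1}\to\mathcal{H}_n\xrightarrow{\triangle^{n-1}}\mathcal{H}_1\to 0$, whereas the paper uses the mirror-image sequence $0\to\mathcal{H}_1\to\mathcal{H}_{n+1}\xrightarrow{\triangle}\mathcal{H}_n\to 0$; your explicit handling of the boundary convention for $\triangle$ is a welcome clarification of a point the paper leaves implicit.
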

\begin{proof}
Suppose the statement holds for $k=1,\dots,n$ and we have $H_{n+1},H_{n+1}'\in\mathcal{H}_{n+1}$ such that $H_n=H_n'$. Then we have \begin{align}
    \triangle\left[H_{n+1}-H_{n+1}'\right]=H_n-H_n'=0,
\end{align}
thus $H_{n+1}-H_{n+1}'\in\mathcal{H}_1$. Therefore (and utilizing in advance Thm.~\ref{thm:AllPHF_generalcase}), we can construct an isomorphism $\bigslant{\mathcal{H}_{n+1}}{\mathcal{H}_1}\to\mathcal{H}_n$, and the proof is complete.
\end{proof}
In particular, if we are given any $\widehat{H}_n\in\mathcal{H}_n$, and we want to find all $\widehat{H}_{n+1}\in\mathcal{H}_{n+1}$ with $\triangle \widehat{H}_{n+1}=\widehat{H}_n$, then this means that it suffices to find a single $\widehat{H}_{n+1}$ with this property as well as all harmonic functions, because any other such $\widehat{H}_{n+1}'$ can be written as $\widehat{H}_{n+1}+\widehat{G}_1$, for some $\widehat{G}_1\in\mathcal{H}_1$.\\
We already know (see e.g. \cite{Conformal},\cite{Poly}, using the idea of the BVP outlined above), that for any entire function\footnote{Note that the inverse does not hold, i.e. not any harmonic function can be written in this manner; as we will see that there are some harmonic functions whose generating functions have radius of convergence $0$.} $P(x)\in\mathbb{R}[x]$, we can construct (the GF of) a harmonic function via 
\begin{align}\label{eq:buildHarmonic}
    H(x,y)=\frac{P\left(\omega(x)\right)-P\left(\omega\left(X_+(y)\right)\right)}{K(x,y)},
\end{align}
where $\omega$ is the conformal mapping introduced in Section~\ref{sec:Prelims}. We will now show what is, in a sense, the opposite direction of the above statement. The following theorem (as well as its proof) is an analogue to \cite[Thm.~2]{Hung}, where a similar result is shown for the case of symmetric walks with small negative steps. 
\begin{theorem}\label{thm:AllHF}
For any discrete harmonic function with generating function $H(x,y)$, there is a unique formal power series $P(x)$ such that (\ref{eq:buildHarmonic}) holds. In particular, we have an isomorphism \begin{align}
    \mathcal{H}_1\cong \mathbb{R}[[x]].
\end{align}
\end{theorem}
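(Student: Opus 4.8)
The plan is to set up a correspondence between harmonic generating functions $H(x,y)$ and formal power series $P(x)$, show it is well-defined in both directions, and then verify it is linear and bijective. Given a harmonic function $h(i,j)$ with generating function $H(x,y)=\sum_{i,j}x^{i+1}y^{j+1}h(i,j)$, recall from the functional equation~(\ref{eq:FE}) (with $\triangle H=0$) that $K(x,y)H(x,y)=K(x,0)H(x,0)+K(0,y)H(0,y)-K(0,0)H(0,0)$. Substituting $y=Y_+(x)$, which annihilates the left-hand side, ties $K(x,0)H(x,0)$ to a function evaluated along the curve $Y_+$; this is precisely the boundary value problem described in Section~\ref{sec:Prelims}. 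Since $\omega$ is a fundamental solution of that BVP (any solution is an entire function composed with $\omega$, by~(\ref{eq:wInv})), I would argue that $K(x,0)H(x,0) = P(\omega(x))$ for some formal power series $P$; combined with the analogous relation for $K(0,y)H(0,y)$ via $\omega\circ X_+$ and the constant term, rearranging~(\ref{eq:FE}) yields exactly~(\ref{eq:buildHarmonic}).

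The first key step is the forward direction: from a harmonic $H$ recover $P$. Here I would work at the level of formal power series in $x$ rather than analytic functions, since the theorem explicitly allows harmonic functions whose GF has radius of convergence $0$. The point is that $\omega(x)$ is a formal power series with $\omega(0)=0$ and $\omega'(0)\neq 0$ (by~(\ref{eq:wInv})), hence $\omega$ is formally invertible; so $P := \bigl(K(\cdot,0)H(\cdot,0)\bigr)\circ \omega^{-1}$ makes sense as a formal power series, and this construction is the inverse of~(\ref{eq:buildHarmonic}). Uniqueness of $P$ is then immediate: if $P(\omega(x))-P(\omega(X_+(y)))$ and $\tilde P(\omega(x))-\tilde P(\omega(X_+(y)))$ agree after dividing by $K$, then setting $Q=P-\tilde P$ we get $Q(\omega(x))=Q(\omega(X_+(y)))$ identically; specializing appropriately (e.g.\ along $y$ where $X_+(y)$ and $x$ are independent, or extracting the $x$-only part) forces $Q$ to be constant, and the constant is pinned down by $\omega(0)=0$ together with $H(0,0)$, so $Q=0$.

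The second key step is the backward direction, which is essentially quoted: for any $P\in\mathbb{R}[[x]]$ the right-hand side of~(\ref{eq:buildHarmonic}) defines (the GF of) a harmonic function. I would check that $K(x,y)$ divides the numerator as formal power series — the numerator vanishes when $y=Y_+(x)$ since then $\omega(X_+(Y_+(x)))=\omega(x)$ (using that $\omega$ is invariant under the Galois involution, from~(\ref{eq:wInv})) — so the quotient $H$ is a well-defined power series, and one reads off from~(\ref{eq:FE}) that $\triangle H=0$, with the boundary values matching Definition~\ref{def:Discrete_PHF}. Finally, linearity of $P\mapsto H$ is obvious from~(\ref{eq:buildHarmonic}), so the two maps are mutually inverse $\mathbb{R}$-linear bijections, giving $\mathcal{H}_1\cong\mathbb{R}[[x]]$.

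The main obstacle I anticipate is the forward direction's rigor in the purely formal (non-convergent) setting: one must be careful that "any solution of the BVP is an entire function of $\omega$" — an analytic statement in~\cite{Conformal} for convergent $H$ — has a clean formal-power-series analogue, namely that $K(x,0)H(x,0)$, viewed as a formal power series satisfying the invariance relation along $X_\pm$, is automatically a formal power series in $\omega(x)$. This requires knowing that the ring of formal power series invariant under the relevant involution is generated by $\omega$, which should follow from~(\ref{eq:wInv}) together with $\omega$ being an invariant in the sense of~\cite[Def.~4.3]{Tutte}; making this precise, and confirming no convergence is secretly used, is where I expect the real work to lie. A secondary subtlety is handling the constant term $K(0,0)H(0,0)$ consistently between the two $\omega$'s (for $x$) and $\omega\circ X_+$ (for $y$), which should reduce to the normalization $\omega(0)=0$.
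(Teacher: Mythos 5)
Your forward map $P := \bigl(K(\cdot,0)H(\cdot,0)\bigr)\circ\omega^{-1}$ is sound: $\omega$ has valuation one at $0$ (by (\ref{eq:wInv}) and $0\in\mathcal{G}^\circ$), hence is formally invertible, and this gives a well-defined formal power series $P$. The problem is the other direction, and the fact that (\ref{eq:buildHarmonic}) is not a literal formula for all of $\mathbb{R}[[x]]$. The paper splits into two cases according to whether $K(0,0)=0$, and your argument only survives in the first one. When $K(0,0)\neq 0$ (the model has a North-East step), we have $d_0:=\omega(X_+(0))\neq 0$: the power series $\omega(X_+(y))$ has a \emph{nonzero constant term}, so for a generic formal power series $P$ the composition $P(\omega(X_+(y)))$ in the numerator of (\ref{eq:buildHarmonic}) is simply undefined as a formal power series. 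You cannot plug something with nonzero constant term into an arbitrary element of $\mathbb{R}[[x]]$. Your proposal implicitly uses this substitution in the backward direction ("for any $P\in\mathbb{R}[[x]]$ the right-hand side of (\ref{eq:buildHarmonic}) defines a harmonic function") and also when verifying that the $y$-boundary comes out right in the forward direction; both fail here.

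The paper circumvents this by working not with a literal substitution but with a carefully designed polynomial family $P_{2m}(z)=z^m(z-d_0)^m$, $P_{2m+1}(z)=z^{m+1}(z-d_0)^m$. The factor $(z-d_0)^m$ is chosen precisely so that $P_m(\omega(X_+(y)))$, which involves plugging in a series whose constant term is $d_0$, has valuation at least $\lfloor m/2\rfloor$ in $y$, while $P_m(\omega(x))$ has valuation at least $\lceil m/2\rceil$ in $x$. Both valuations tend to infinity, so countable sums $\sum a_m P_m(\omega(x))$ and $\sum a_m P_m(\omega(X_+))$ converge as formal power series, and one can iteratively match arbitrary boundary data. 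The isomorphism $\mathcal{H}_1\cong\mathbb{R}[[x]]$ is then realized through this graded basis and the associated staggered valuation structure of the $H_1^m$, not through formal inversion of $\omega$. So the secondary obstacle you identified (formalising "entire function of $\omega$") is real, but the decisive one — the undefined substitution when $d_0\neq 0$ — is the one that actually forces a different construction, and your proposal does not address it.

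As a small additional remark, even in the $K(0,0)=0$ case the paper does not invert $\omega$ directly; it uses the local form $\omega(x)=x(1+p(x))/(1-x)^{\pi/\theta}$ together with the Weierstra{\ss} preparation theorem to justify that the iterative matching of coefficients terminates at each order and yields genuine power series. Your inversion argument is a legitimate alternative there and is arguably cleaner, but it only covers half of the theorem.
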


\begin{proof}[Proof (outline).] The arguments are mostly the same as in \cite[Thm.~2]{Hung}. From (\ref{eq:FE}), it follows that $K(x,y)H(x,y)$ is already uniquely defined by the (univariate) boundary terms $K(x,0)H(x,0)$ and $K(0,y)H(0,y)$. The idea is to construct, using appropriate power series $P(x)$ in (\ref{eq:buildHarmonic}), a harmonic function for any given possible boundary condition. If $K(0,0)=0$, which is the same as saying that our model does not include a North-East step, then we cannot write $1/K(x,y)$ as a power series, but we can instead choose $X_+(0)$ such that $X_+(0)=0$, i.e. we can substitute $X_+$ into another power series. Therefore, we will consider two cases:
\begin{enumerate}
    \item $K(0,0)=0$:\\
    In this case, substituting $X_+$ for $x$ in (\ref{eq:FE}) gives \begin{align}
        0=K(X_+,0)H(X_+,0)+K(0,y)H(0,y).
    \end{align}
    Utilizing this to substitute for $K(0,y)H(0,y)$ in (\ref{eq:FE}), we obtain \begin{align}
        K(x,y)H(x,y)=\underbrace{K(x,0)H(x,0)}_{=:P(x)}-\underbrace{K(X_+,0)H(X_+,0)}_{=:P(X_+)}.
    \end{align}
    Setting \begin{align}\label{eq:defH1_1}
        H_1^m(x,y)=\frac{\omega(x)^m-\omega(X_+)^m}{K(x,y)},
    \end{align}
    and utilizing that around $0$ we have (after scaling and potentially switching $x,y$) $\omega(x)=\frac{x(1+p(x))}{(1-x)^{\pi/\theta}}$ (see \cite[5.3]{Book}; use that our walk is not singular), we can iteratively compute coefficients $a_k$ such that $\sum a_j\omega(x)^k=P(x)$. To see that at the end we indeed obtain a power series, one can apply the Weierstra{\ss} preparation theorem. 
    \item $K(0,0)\neq 0$:\\
    In this case, the previous approach does not work anymore since substitution of $X_+$ into an arbitrary power series fails. Instead, let now $\omega(x)=\sum x^nc_n,\omega\left(X_+\right)=\sum y^n d_n$. We know that $c_1,d_1\neq 0, c_0=0$ (see \cite[5.3]{Book}, and notice that $p_{-1,-1}\neq 0$).\\
    We can now proceed by defining \begin{align}
        P_{2m}(z)&=z^m(z-d_0)^m,\\
        P_{2m+1}(z)&=z^{m+1}(z-d_0)^m.
    \end{align}
    Letting \begin{align}\label{eq:defH1_2}
        H_1^m(x,y):=\frac{P_m\left(\omega(x)\right)-P_m\left(\omega(X_+)\right)}{K(x,y)},
    \end{align}
    one can check that the monomial with non-zero coefficient with minimal degree in the series representation of $H_1^m(x,y)$ around $0$ occurs for $k=l=m$ for $m$ even, and $k=l+1=m$ otherwise. Note here that $\omega(x),\omega(X_0)$ have non-vanishing derivatives at $0$ as $0\in\mathcal{G}^\circ$, see \cite[5.3]{Poly}.  From there, given arbitrary power series $Q(x),R(y)$ with $Q(0)=R(0)$, one can again iteratively build coefficients $a_n$ such that $\sum a_nP_n(\omega(x))=Q(x)$, $\sum b_nP_n\left(\omega(X_+)\right)=R(y)$. We have thus constructed a harmonic function with boundary terms $Q(x),R(y)$; since these were arbitrary we are done. Note that as $K(0,0)\neq 0$, the division by $K(x,y)$ is not an issue here.

\end{enumerate}

\end{proof}
Thm.~\ref{thm:AllHF} implies that the functions defined by \begin{align}
\label{eq:DefHF}
H_1^m(x,y):=\frac{P_m(\omega(x))-P_m(\omega(X_+))}{K(x,y)},
\end{align}
where \begin{align}
    &P_m(z):=z^m&\qquad\text{ if }K(0,0)=0,\\
   &\begin{drcases}
   P_{2m}(z):=z^m(z-d_0)^m\\
   P_{2m+1}(z):=z^{m+1}(z-d_0)^m
   \end{drcases}&\qquad\text{ if }K(0,0)\neq 0,
\end{align}
and $d_0=\omega(X_+(0))$, as in the proof of Thm.~\ref{thm:AllHF}, form a Schauder basis\footnote{That is, we can express any function not necessarily via finite, but via countable sums.} of $\mathcal{H}_1$. This, combined with the idea from Lemma~\ref{lemma:PHFspaces_isomorphism}, gives us a criterion for a family of polyharmonic functions to be a Schauder basis of $\mathcal{H}$, the space of all polyharmonic functions.
\begin{lemma}\label{lemma:criterion_AllPHF}
Let $\left(H_n^k\right)_{n,k\in\mathbb{N}}$ be a family of discrete polyharmonic functions, such that \begin{enumerate}
    \item $H_1^k(x,y)=\frac{P_k(\omega(x))-P_k(\omega(X_+))}{K(x,y)}$ as in (\ref{eq:DefHF}),
    \item $\triangle H_{n+1}^k=H_n^k$,
\end{enumerate}Then, the $(H_m^l)$, $1\leq m\leq l, 1\leq k$ form a Schauder basis of $\mathcal{H}_n$, that is, given any $H_n\in\mathcal{H}_n$ there are unique $a_{m,l}$, $1\leq i\leq n$, $j\in\mathbb{N}$ such that \begin{align}
    H_n=\sum_{i=m}^n\sum_{l=1}^\infty a_{m,l}H_m^l.
\end{align}

\end{lemma}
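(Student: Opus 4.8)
The plan is to proceed by induction on $n$, built on the behavior of $\triangle$ on the tower $\mathcal{H}_1\subseteq\mathcal{H}_2\subseteq\cdots$ set up before Lemma~\ref{lemma:PHFspaces_isomorphism}: $\triangle$ maps $\mathcal{H}_{k+1}$ into $\mathcal{H}_k$, with kernel exactly $\mathcal{H}_1$. The base case $n=1$ is not new -- it is precisely the assertion recorded just after the proof of Thm.~\ref{thm:AllHF} that the $H_1^k$ of the form (\ref{eq:DefHF}) form a Schauder basis of $\mathcal{H}_1$; under the isomorphism $\mathcal{H}_1\cong\R[[x]]$ this says exactly that the polynomials $P_k$ were chosen so that every formal power series expands uniquely as a coefficientwise-convergent sum $\sum_k a_k P_k$.

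Assume now that $(H_m^l)_{1\le m\le n,\ l\in\N}$ is a Schauder basis of $\mathcal{H}_n$, and fix $H_{n+1}\in\mathcal{H}_{n+1}$. First I would push down one level: since $\triangle H_{n+1}\in\mathcal{H}_n$, the inductive hypothesis gives unique coefficients with $\triangle H_{n+1}=\sum_{m=1}^n\sum_{l\in\N}a_{m,l}H_m^l$. Then I would lift this expansion, using hypothesis~(2): put $\widetilde H:=\sum_{m=1}^n\sum_{l\in\N}a_{m,l}H_{m+1}^l$, which again lies in $\mathcal{H}_{n+1}$ because each $H_{m+1}^l\in\mathcal{H}_{m+1}\subseteq\mathcal{H}_{n+1}$. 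Since each value $\triangle f(x)$ is a finite linear combination of values of $f$, the operator $\triangle$ commutes with these (coefficientwise, i.e.\ pointwise-finite on $\VE$) sums, so $\triangle\widetilde H=\sum_{m,l}a_{m,l}H_m^l=\triangle H_{n+1}$. Hence $H_{n+1}-\widetilde H$ is harmonic (its Laplacian vanishes on $\VE^\circ$ and it vanishes on $\partial\VE$), and the base case provides unique $b_{1,l}$ with $H_{n+1}-\widetilde H=\sum_{l\in\N}b_{1,l}H_1^l$. Re-indexing $\widetilde H$ (send $m$ to $m+1$) yields the required expansion $H_{n+1}=\sum_{m=1}^{n+1}\sum_{l}b_{m,l}H_m^l$ with $b_{m,l}=a_{m-1,l}$ for $m\ge2$.

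Uniqueness propagates in the same spirit. If $\sum_{m=1}^{n+1}\sum_l b_{m,l}H_m^l=0$, apply $\triangle$; using $\triangle H_1^l=0$ and $\triangle H_m^l=H_{m-1}^l$ this becomes $\sum_{m=1}^n\sum_l b_{m+1,l}H_m^l=0$, so $b_{m+1,l}=0$ for $1\le m\le n$ by the inductive hypothesis, after which the relation collapses to $\sum_l b_{1,l}H_1^l=0$ and the base case forces $b_{1,l}=0$. Grouping the coefficient family $(b_{m,l})$ by its first index $m$ recovers, as a by-product, the explicit form of the isomorphism $\mathcal{H}_n\cong(\mathcal{H}_1)^n$ of Lemma~\ref{lemma:PHFspaces_isomorphism}.

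The one point that genuinely needs care is the bookkeeping of the infinite sums: I must check that the lifted series $\widetilde H=\sum a_{m,l}H_{m+1}^l$ is still a legitimate (coefficientwise-convergent) Schauder sum whenever $\sum a_{m,l}H_m^l$ is, and likewise that applying $\triangle$ to the uniqueness relation is term-by-term valid. This is where the concrete construction enters: for the $H_m^l$ produced in Thms.~\ref{thm:AllHF} and \ref{thm:AllPHF_generalcase} the order of vanishing at the origin grows with $l$ for each fixed $m$, so that every generating-function coefficient of such a sum is a finite linear combination of the $b_{m,l}$. Granting that, all of the manipulations above are legitimate coefficient by coefficient, and the remainder of the argument is routine.
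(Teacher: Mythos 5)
Your induction---push $H_{n+1}$ down with $\triangle$, expand $\triangle H_{n+1}$ in $\mathcal{H}_n$, lift by shifting the first index, correct by a harmonic function, and obtain uniqueness by applying $\triangle$ to a vanishing sum---is exactly the core of the paper's argument; the paper itself remarks that when convergence of the sums is clear, the proof ``boils down to the very last step,'' which is what you wrote. The gap is the point you flag in your last paragraph and then dismiss. The lemma is stated for an \emph{arbitrary} family satisfying hypotheses (1) and (2), and it is later invoked for several distinct constructions (Thm.~\ref{thm:AllPHF_generalcase}, Thm.~\ref{thm:buildPHF}). For such a family nothing controls $\operatorname{deg}_0 H_m^l$ when $m\geq 2$: the hypotheses determine $H_{m+1}^l$ only up to addition of an arbitrary element of $\mathcal{H}_m$, so for fixed $m$ the orders of vanishing at the origin need not grow with $l$. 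Consequently the series $\sum_l a_{m,l}H_m^l$, your lifted series $\widetilde H$, and the term-by-term application of $\triangle$ in the uniqueness step are not a priori meaningful coefficient by coefficient. Your appeal to ``the concrete construction'' proves the lemma only for those particular families, not as the general criterion it is stated and used as.

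The paper closes this gap with a preliminary reduction that your proof is missing: from $(H_n^k)$ it builds a modified family $(\widehat H_n^k)$ with $\widehat H_1^k=H_1^k$, $\triangle\widehat H_{n+1}^k=\widehat H_n^k$, each $\widehat H_n^k$ a countable combination of the $H_m^l$ with $m\leq n$, and---crucially---$\operatorname{deg}_0\widehat H_{n+1}^k>\operatorname{deg}_0\widehat H_n^k$. The device is the one you already use for the correction term, applied systematically: at each level one subtracts a harmonic function annihilating the boundary terms $K(x,0)\widehat H(x,0)$ and $K(0,y)\widehat H(0,y)$ up to a prescribed order, and the functional equation (\ref{eq:FE}) then forces the order at the origin to increase. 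With that degree control in place, all of your coefficientwise manipulations become legitimate and your induction goes through verbatim for the hatted family; the basis property is then transferred back to $(H_n^k)$ via the third property above. Without some such normalisation, the convergence of the Schauder sums is not routine bookkeeping but the actual technical content of the lemma.
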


\begin{proof}
We do not a priori assume that sums of the form $\sum_{n\geq 0}H_n^k$ converge as formal power series. Therefore, we will first modify our family $(H_n^k)$, forcing this type of convergence. The main issue here is that we do not have any handle on the order of $H_n^k$ at $0$, written in the following as $\operatorname{deg}_0{H_n^k}:=\min\{u+v\mid \text{ the term } x^uy^v\text{ has non-zero coefficient }\}$. However, (\ref{eq:FE}) implies that if $H_{n+1}^k$ has vanishing boundary terms up to a sufficiently high order -- which we can force by adding a suitable harmonic function, as in the proof of Thm.~\ref{thm:AllHF} --, then we have $\operatorname{deg}_0{H_{n+1}^k}>\operatorname{deg}_0{H_n^k}$. We will utilize this in order to construct a family $(\widehat{H}_n^k)$ such that, for each $n$, we have \begin{enumerate}
    \item $\widehat{H_1^k}=H_1^k$,
    \item $\triangle\widehat{H}_{n+1}^k=\widehat{H}_n^k$,
    \item each $\widehat{H}_n^k$ can be written as a countable sum of the $H_m^k$ for $m\leq n$,
    \item $\operatorname{deg}_0\widehat{H}_{n+1}^k$ is at least $\operatorname{deg}_0\widehat{H}_n^k+1\geq\left[\frac{k}{2}\right]+n-1$,
    \item the $\widehat{H_m^k}$ with $m\leq n$ form a Schauder basis of $\mathcal{H}_n$.
\end{enumerate}
From this, the statement follows immediately.\\
We proceed by induction. For $n=1$, we know that all the conditions are satisfied due to Thm.~\ref{thm:AllHF} (the part about the order at $0$ is follows from the construction of the $H_1^k$). Now suppose we have found a suitable family $(\widehat{H_m^k})$ for $k\in\Z^+, m=1,\dots,n$, and pick any $k$. We want to construct a suitable $\widehat{H_{n+1}^k}$. As $H_1^k=\widehat{H_1^k}$, we know that \begin{align}
    \triangle^{n-1}H_n^k=\triangle^{n-1}\widehat{H}_n^k,
\end{align} 
that is, $G_{n-1}^k:=H_n^k-\widehat{H}_n^k\in\mathcal{H}_{n-1}$. By the induction hypothesis, we can therefore write $G_{n-1}^k$ as some countable sum of the $\widehat{H}_{n-1}^l$, for which we can then find a pre-image $G_n^k$ under $\triangle$ by substituting $n$ for $n-1$ in that sum representation. Note that, again by the induction hypothesis, we do not have any convergence issues here. We therefore have \begin{align}
    \triangle \left[H_{n+1}^k-G_n^k\right]=H_n^k-\left(H_n^k-\widehat{H}_n^k\right)=\widehat{H}_n^k.
\end{align}
Now all we need to do is to add a suitable harmonic function $J_{n,k}$ killing the boundary terms up to sufficiently high order (see the proof of Thm.~\ref{thm:AllHF}), and then, letting \begin{align}
    \widehat{H}_{n+1}^k:=H_{n+1}^k-G_n^k-J_{n,k}
\end{align}
we see that the first four conditions are satisfied (note that the bound on the degree at $0$ follows directly from the degree of the boundary conditions as well as the functional equation (\ref{eq:FE})). It therefore remains to show that the $\widehat{H}_m^k$ with $1\leq m\leq n+1, k\geq 1$ form a Schauder basis of $\mathcal{H}_{n+1}$. First, we will show that we can generate all polyharmonic functions. To see this, pick any element $H'_{n+1}\in\mathcal{H}_{n+1}$. By induction, we know that we can write \begin{align}
    \triangle H'_{n+1}=\sum_{m=1}^n\sum_{l\geq 1}a_{m,i}H_m^l.
\end{align}
However, letting $H_{n+1}:=\sum_{m=1}^n\sum_{l\geq 1}a_{m,i}H_{m+1}^l$, we have \begin{align}
    \triangle \left[H'_{n+1}-H_{n+1}\right]=0,
\end{align}
thus $H'_{n+1}-H_{n+1}\in \mathcal{H}_1$. Hence, up to addition of a harmonic function (remember that by Thm.~\ref{thm:AllHF} they can all be written as countable sum of the $H_1^k$), we know that our $H'_{n+1}$ can already be written as countable sum of the $\widehat{H}_m^l$, and we are done.\\
To show uniqueness of the coefficients, suppose we have two representations \begin{align}
    H_{n+1}=\sum_{m=1}^{n+1}\sum_{k=1}^\infty a_{m,k}H_m^k=\sum_{m=1}^{n+1}\sum_{k=1}^\infty b_{m,k}H_m^k.
\end{align}
As \begin{align}
    \triangle^nH_{n+1}=\sum_{k=1}^\infty (a_{n+1,k}-b_{n+1,k})H_1^k=0,
\end{align}
we know by the basis property for $\mathcal{H}_1$ that $a_{n+1,k}=b_{n+1,k}$ for all $k$. However, this means that \begin{align}
    \sum_{m=1}^n\sum_{k=1}^\infty a_{m,k}H_m^k=\sum_{m=1}^n\sum_{k=1}^\infty b_{m,k}H_m^k,
\end{align}
and by the induction hypothesis we therefore have $a_{m,k}=b_{m,k}$ for all $1\leq m\leq n+1,k\geq 1$.
\end{proof}
\textbf{Remarks:} 
\begin{itemize}
    \item If instead of defining the $H_1^k$ as in Thm.~\ref{thm:AllHF} one allows for any Schauder basis of $\mathcal{H}_1$, then via Lemma~\ref{lemma:criterion_AllPHF} one obtains not only a sufficient, but also a necessary condition for a family of polyharmonic functions to form a Schauder basis.
    \item In the constructions of polyharmonic functions given below, it will often be the case that it is clear from construction that for any given $n$, any sum of the form $\sum_{k\geq 1}a_kH_n^k$ converges as power series. In this case, we can let $\widehat{H}_n^k:=H_n^k$, and the proof boils down to the very last step.
    \item In Sections~\ref{sec:DiscretePHF} and \ref{sec:Decoupling}, different bases will be constructed. It is not immediate how to switch between them; the critical step is the construction of harmonic functions eliminating the boundary terms, which are generally not easy to write down explicitly. One could argue that the basis constructed in Section~\ref{sec:Decoupling} is in some sense a canonical one, seeing as it has a particularly nice shape, but it is not clear whether something comparable exists in the infinite group case, nor whether they stand out due to, say, a particular combinatorial interpretation.
    \end{itemize}\vskip\baselineskip

If we compare the functional equation (\ref{eq:FE}) for harmonic and polyharmonic functions, then the only difference lies in the additional term of $xyH_n(x,y)$ on the right-hand side not vanishing for the latter. In terms of the boundary value problem, this means that
we now want to solve \begin{small}\begin{align}\label{eq:DecouplingBVP}
    K\left(X_+,0\right)H_n\left(X_+,0\right)-K\left(X_-,0\right)H_n\left(X_-,0\right)=X_+yH_{n-1}(X_+,y)-X_-yH_{n-1}(X_-,y).
\end{align}
\end{small}\noindent
In an ideal world, the right-hand side of the latter equation would be $0$ as in the harmonic case, and this is indeed what happens for the simple walk (and, more generally, if ever $\pi/\theta=2$ and the group is finite, as will be discussed in Section~\ref{sec:pitheta2}). In this case, we can proceed as before, and obtain an explicit formula for polyharmonic functions, see Thm.~\ref{thm:pitheta2:finitegroup_PHF}.

\subsection{Example: the simple walk}\label{sec:SimpleWalk}
The simple walk has the step set $\mathcal{S}=\{\uparrow, \rightarrow,\downarrow,\leftarrow\}$, each with probability $\frac{1}{4}$. We have \begin{align}
    K(x,y)=\frac{xy}{4}\left(x+y+x^{-1}+y^{-1}\right)-xy,\quad
    \omega(x)=\frac{-2x}{(1-x)^2},\quad 
    \omega\left(X_+\right)=-\omega(y).
\end{align}
It turns out that the right-hand side of (\ref{eq:DecouplingBVP}) keeps vanishing, and thus one can iteratively construct polyharmonic functions via $H_{n+1}(x,y):=\frac{xyH_n(x,y)-X_+yH_n(X_+,y)}{K(x,y)}$. This allows us to find an explicit expression for all resulting polyharmonic functions. This property is directly tied to the fact that $\pi/\theta=2$, where $\theta$ is given by (\ref{eq:defTheta}), which will be discussed in more detail in Section~\ref{sec:pitheta2}.
\\
We therefore obtain a basis of all polyharmonic functions by letting \begin{align}
    H_{n+1}^k(x,y)=\frac{xyH_n^k(x,y)-X_+yH_n^k(X_+,y)}{K(x,y)}.
\end{align}
For an explicit formula as well as a proof, see  Thm.~\ref{thm:pitheta2:finitegroup_PHF}.
We can for instance compute 
$H_1^1=-\frac{8}{(1-x)^2(1-y)^2}$, $H_1^2=-\frac{32y}{(x-1)^2(y-1)^4}$, $H_1^3=-\frac{128y^2}{(x-1)^2(y-1)^6}$. One can show by induction that we have $H_1^k=-\frac{2\cdot 2^{2k}}{(x-1)^2(y-1)^{2k}}$.\\
Proceeding to compute the generating functions $V_{1,2,3}$ of $v_{1,2,3}$ as given in (\ref{eq:as_SW_1})--(\ref{eq:as_SW_2}), we obtain \begin{align*} 
&V_1=64H_1^1,\quad V_2=\frac{3}{8}H_2^1-\frac{3}{8}H_1^2+60H_1^1,\\ 
&V_3=-24H_3^1+24H_2^2+72H_2^1-30H_1^3-72H_1^2+5072H_1^1.
\end{align*}

It is somewhat striking here that the only $p$-polyharmonic part contained in $v_p$ is $H_p^1$, which is in some manner the simplest possible. At this stage there is neither a proof that this is always true nor a counter-example.

\section{A general solution}\label{sec:generalSolution}
While the computation for the simple walk turned out to be fairly simple, this was mainly due to the right-hand side of (\ref{eq:DecouplingBVP}) consistently vanishing. This does not happen in general. For the tandem walk, for instance, we arrive at 
\begin{align}
    K(X_+,0)H_1^1(X_+,0)-K(X_-,0)H_1^1(X_+,0)=\frac{y^3\sqrt{1-4y}}{(y-1)^5}.
\end{align}
The direct approach using a BVP like in the harmonic case does not generally yield an explicit solution as easily as before. One could modify the structure in order to obtain a similar BVP as before, using a decoupling function, which is the approach which works with finite group models and will be discussed in Section~\ref{sec:Decoupling}. However, we will first construct polyharmonic functions directly utilizing the functional equation (\ref{eq:FE}), independently of whether or not the group is finite. The main idea again utilizes that in Thm.~\ref{thm:AllHF}, we showed that given any power series $P(x)\in\mathbb{C}[[x]]$, we can construct a harmonic function $H(x,y)$ such that $K(x,0)H(x,0)=P(x)$. Now suppose that for one of the $H_1^k$ constructed in the aforementioned theorem, there is a $H_2^k$ such that $\triangle H_2^k=H_1^k$. Then, subtracting a harmonic function with the same values on the boundary, we know that there is also a $\widehat{H}_2^k$, such that $\triangle \widehat{H}_2^k=H_1^k$ and $K(x,0)\widehat{H}_2^k(x,0)=0$. By Lemma~\ref{lemma:criterion_AllPHF}, if we know this $\widehat{H}_2^k$ then we can reconstruct $H_2^k$ (or indeed any other biharmonic function with image $H_1^k$ under $\triangle$). Therefore, our strategy will be to utilize (\ref{eq:FE}) in order to find this particular $\widehat{H}_2^k$, where the assumption that $K(x,0)\widehat{H}_2^k(x,0)=0$ simplifies the equation immensely. While the construction itself is not very complicated, we will need a small technical lemma to make sure we will indeed end up with bivariate power series.
\begin{lemma}\label{lemma:Weierstrass} Suppose we have a model such that $K(0,0)=0$, $\left.\frac{\partial}{\partial x} K\right|_{x=y=0}\neq 0$, and select $X_+(y)$ such that $0=X_+(0)$. Furthermore, let $F(x,y)$ be a bivariate power series, such that $F(X_+(y),y)=0$ and that $F(x,0)\neq 0$. Then, \begin{align}
\frac{F(x,y)}{K(x,y)}\end{align}
is a bivariate power series in $x,y$.
\end{lemma}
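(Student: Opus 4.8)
The plan is to factor the kernel near the origin using the Weierstra{\ss} preparation theorem, cancel the common factor with $F$, and observe that what remains is a unit times a power series.

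First I would record the local structure of $K$. Since the model has small steps and is non-degenerate, $K(x,y)$ is a genuine polynomial, quadratic in each variable, and the hypotheses $K(0,0)=0$ together with $\left.\frac{\partial}{\partial x}K\right|_{x=y=0}\neq 0$ say exactly that $K(x,0)$ vanishes to order precisely $1$ at $x=0$. Hence, as an element of $\C[[x,y]]$, the series $K$ satisfies the assumptions of the Weierstra{\ss} preparation theorem in the distinguished variable $x$ with order $1$, so that
\begin{align*}
K(x,y)=u(x,y)\bigl(x-g(y)\bigr),
\end{align*}
with $u\in\C[[x,y]]$ a unit ($u(0,0)\neq 0$) and $g\in\C[[y]]$, $g(0)=0$. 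Setting $x=g(y)$ shows $K(g(y),y)\equiv 0$, so $g$ is the unique branch of the root curve $\{K=0\}$ through the origin (uniqueness because $x=0$ is a simple zero of $K(\cdot,0)$); since the branch $X_+$ has been chosen with $X_+(0)=0$, we get $g=X_+$ as power series, and in particular $X_+\in\C[[y]]$.

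Next I would exploit the vanishing of $F$ along this branch. Applying the Weierstra{\ss} division theorem to $F$ with respect to the degree-one Weierstra{\ss} polynomial $x-X_+(y)$ yields a unique decomposition
\begin{align*}
F(x,y)=\bigl(x-X_+(y)\bigr)G(x,y)+R(y),\qquad G\in\C[[x,y]],\ R\in\C[[y]];
\end{align*}
evaluating at $x=X_+(y)$ and using $F(X_+(y),y)=0$ forces $R\equiv 0$, hence $F=(x-X_+(y))G$. Combining the two factorizations gives
\begin{align*}
\frac{F(x,y)}{K(x,y)}=\frac{\bigl(x-X_+(y)\bigr)G(x,y)}{u(x,y)\bigl(x-X_+(y)\bigr)}=G(x,y)\,u(x,y)^{-1}\in\C[[x,y]],
\end{align*}
since $u$ is a unit in $\C[[x,y]]$. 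The hypothesis $F(x,0)\neq 0$ is not needed for this conclusion; because $X_+(0)=0$ it is equivalent to $G(x,0)\neq 0$, hence to $(F/K)(x,0)\neq 0$, which is the form in which the lemma is applied later.

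The step I expect to be the only real subtlety is the identification $g=X_+$: one has to be careful that the closed-form expression $X_+(y)=\frac{-\tilde b(y)+\sqrt{\tilde b(y)^2-4\tilde a(y)\tilde c(y)}}{2\tilde a(y)}$ genuinely defines a holomorphic function (equivalently, a power series) near $y=0$ even when $\tilde a(0)=0$, as happens for instance for the simple walk; this is precisely where the choice of branch of the square root and the normalization $X_+(0)=0$ enter. Once one knows that $K(\cdot,0)$ has a simple zero at the origin, the preparation theorem (or, equivalently, the implicit function theorem) produces exactly one power-series root through the origin and thereby pins this branch down, after which the argument is a routine application of the preparation and division theorems.
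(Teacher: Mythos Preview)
Your argument is correct and follows the same overall strategy as the paper: apply the Weierstra{\ss} preparation theorem to $K$ to extract a factor $x-g(y)$ with $g=X_+$, then show that this factor also divides $F$. The one genuine difference is in how you carry out the second step. The paper applies the preparation theorem a second time, to $F$, writing $F=f(x,y)P(x,y)$ with $f$ a unit and $P$ a Weierstra{\ss} polynomial in $x$, and then argues that $P(g(y),y)=0$; this is where the hypothesis $F(x,0)\neq 0$ is actually used, since without it $F$ need not have finite order in $x$ at the origin and the preparation theorem does not apply. You instead use the division theorem with divisor $x-X_+(y)$, which works for any $F\in\C[[x,y]]$ and makes the hypothesis $F(x,0)\neq 0$ superfluous, as you correctly note. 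Your route is slightly cleaner for exactly this reason; the paper's extra assumption is harmless in the applications but is, strictly speaking, not needed for the lemma itself.
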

\begin{proof}
By the Weierstraß preparation theorem, we can write \begin{align}
\label{eq:weierstrass1}
K(x,y)=e(x,y)(x-g(y)),\end{align}
with $e$ being an invertible bivariate power series, and $f_2(y)\in\C[[y]]$, with $f_2(0)=0$.\\
We can also rewrite \begin{align}\label{eq:weierstrass2}
F(x,y)=f(x,y)(x^k+x^{k-1}f_{k-1}(y)+\dots+f_0(y))=:f(x,y)P(x,y),\end{align}
where again $f$ is an invertible bivariate power series, the $f_i(y)\in\C[[y]]$ satisfy $f_i(0)=0$, and $P(x,y)\in\C[[y]][x]$ is a polynomial in $x$ over the ring $\C[[y]]$. Consequently, we have \begin{align}
\frac{F(x,y)}{K(x,y)}=\frac{f(x,y)}{e(x,y)}\cdot\frac{P(x,y)}{x-g(y)}.\end{align}
Since $e(x,y)$ is invertible, it remains to show that the second factor is a power series in $x,y$. To do so, all we need to do is to show that $g(y)$ is a zero of the polynomial $P(x,y)$, i.e. that $P(g(y),y)=0$.\\
By (\ref{eq:weierstrass2}), we know that $P(X_+(y),y)=0$ locally around $0$, and by (\ref{eq:weierstrass1}) we see that we also have $X_+(y)=g(y)$. The statement follows.
\end{proof}

\textbf{Remark:} The case $K(0,0)\neq 0$ (which is equivalent to our model having a North-East step) is not really of interest, as in this case $1/K(x,y)$ is a power series anyway, so the statement holds trivially. Also note that we must have either $K(0,0)\neq 0$, $\left.\frac{\partial}{\partial x} K\right|_{x=y=0}\neq 0$ or $\left.\frac{\partial}{\partial y} K\right|_{x=y=0}\neq 0$, because otherwise our model could have no North, North-East or East steps and would therefore be singular.

\begin{lemma}\label{lemma:ConstructPHF}
Suppose we have an arbitrary non-singular model with small steps and zero drift such that either $K(0,0)\neq 0$ or $\frac{\partial}{\partial x}K(x,y)\mid_{x=y=0}\neq 0$. Given any bivariate power series $G(x,y)$ which is analytic around $(0,0)$, we can then construct a power series $H(x,y)$ such that
\begin{enumerate}
    \item $H(x,y)$ is analytic around $(0,0)$,
    \item $\triangle H(x,y)=G(x,y)$, and
\item $K(x,0)H(x,0)=0$
\end{enumerate}
by letting \begin{align}
\label{eq:ConstructPHF_def1}
    H(x,y)&:=\frac{X_+y G(X_+,y)-xyG(x,y)}{K(x,y)}&\text{ if }K(0,0)=0,\\
    \label{eq:ConstructPHF_def2}
    H(x,y)&:=\frac{xyG(x,y)}{K(x,y)}&\text{ if }K(0,0)\neq 0,
\end{align}
where we select $X_+(y)$ such that $X_+(0)=0$.
\end{lemma}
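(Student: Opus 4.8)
The plan is to treat the two cases $K(0,0)\neq 0$ and $K(0,0)=0$ of the definition separately, and within each to verify the three assertions in the order (1), (3), (2), since the vanishing of the boundary term (3) is exactly what makes the verification of (2) go through.

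The case $K(0,0)\neq 0$ is immediate. Then $1/K(x,y)$ is itself a power series around the origin, so $H=xyG/K$ is a power series divisible by $xy$; this gives (1), and since $H(x,0)=0$ it gives $K(x,0)H(x,0)=0$ as well as $K(0,0)H(0,0)=0$, i.e.\ (3). For (2) I substitute $K(x,y)H(x,y)=xyG(x,y)$ into the functional equation (\ref{eq:FE}); once the boundary terms drop out, the resulting identity has one side divisible by $xy$ and the other a function of $y$ alone, so both are identically zero and one reads off $\triangle H=G$.

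The real work is in the case $K(0,0)=0$. The first point is that the quotient (\ref{eq:ConstructPHF_def1}) is genuinely a bivariate power series, which is not obvious because $K(x,y)$ vanishes at the origin. The numerator $N(x,y):=X_+yG(X_+,y)-xyG(x,y)$ (which makes sense as a power series precisely because the choice $X_+(0)=0$ lets us substitute $X_+$ into $G$) vanishes on the curve $x=X_+(y)$, since the two summands agree there, so Lemma~\ref{lemma:Weierstrass} is the tool to use; but that lemma needs a numerator that is regular in $x$, i.e.\ nonzero when restricted to $y=0$, whereas $N$ is visibly divisible by $y$, indeed by as high a power of $y$ as $G$ is. I would therefore first peel off that power: write $G=y^mG_0$ with $G_0(x,0)\not\equiv 0$ (if $G\equiv 0$ simply take $H=0$), so that $N=y^{m+1}F_0$ with $F_0(x,y)=X_+G_0(X_+,y)-xG_0(x,y)$; now $F_0(X_+(y),y)=0$ while $F_0(x,0)=-xG_0(x,0)\not\equiv 0$ (again using $X_+(0)=0$), so Lemma~\ref{lemma:Weierstrass}, whose hypothesis $\partial_x K|_{x=y=0}\neq 0$ is exactly our standing assumption in this case, tells us $F_0/K$ is a power series. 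Hence $H=y^{m+1}(F_0/K)$ is a power series divisible by $y$, which gives (1), and then $H(x,0)=0$ gives (3) and $K(0,0)H(0,0)=0$. For (2) I substitute $K(x,y)H(x,y)=N(x,y)$ into (\ref{eq:FE}); the boundary terms $K(x,0)H(x,0)$ and $K(0,0)H(0,0)$ vanish by what was just shown, and the identity that remains rearranges into the form $xy(\triangle H-G)=K(0,y)H(0,y)-X_+(y)yG(X_+(y),y)$. Here the whole right-hand side is a function of $y$ alone — this is the precise role of the $X_+$-summand in the numerator — while the left-hand side is divisible by $x$; since one side has no $x^0$-part and the other has no positive-$x$-part, both vanish, which is exactly $\triangle H=G$ (and, as a byproduct, identifies the left boundary term as $K(0,y)H(0,y)=X_+(y)yG(X_+(y),y)$).

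The step I expect to be the genuine obstacle is the power-series claim (1) in the case $K(0,0)=0$: everything hinges on Lemma~\ref{lemma:Weierstrass}, and the care needed is in the preliminary factoring $G=y^mG_0$ — without it the requirement that the numerator be regular in $x$ simply fails — together with a check of the edge cases ($G\equiv 0$; and $G$ divisible by powers of $x$, which causes no trouble since the lemma asks only that $F_0(x,0)$ not be the zero series, not that it be nowhere vanishing). Once (1) and (3) are in hand, deducing (2) from the functional equation is a short computation.
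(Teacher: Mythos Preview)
Your argument is correct, and in the harder case $K(0,0)=0$ it takes a genuinely different route from the paper to establish that $H$ is a power series. The paper does not strip powers of $y$ from $G$; instead it picks an auxiliary harmonic function $H'$ with $K(x,0)H'(x,0)\not\equiv 0$ (such an $H'$ exists by the construction in Theorem~\ref{thm:AllHF}), observes that $K(x,y)\bigl(H(x,y)+H'(x,y)\bigr)$ now satisfies both hypotheses of Lemma~\ref{lemma:Weierstrass}, applies the lemma to this sum, and then subtracts the known power series $H'$. Your trick of writing $G=y^mG_0$ so that the reduced numerator $F_0$ is regular in $x$ is more self-contained, since it avoids appealing to the existence of a harmonic function with prescribed boundary behaviour; the paper's trick, on the other hand, sidesteps the small case analysis on $G$ (the $G\equiv 0$ case and the extraction of $m$). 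Either way the division step rests entirely on Lemma~\ref{lemma:Weierstrass}, and your verification of (2) via the functional equation---isolating a term in $y$ alone against one divisible by $x$---is exactly the mechanism the paper uses as well.

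One small wrinkle in your write-up of the easy case $K(0,0)\neq 0$: since $H=xyG/K$ is divisible by $xy$, \emph{all three} boundary terms in (\ref{eq:FE}) vanish, so there is no ``function of $y$ alone'' left over---the identity reduces directly to $K(x,y)H(x,y)=-xy[\triangle H](x,y)$ and you read off $\triangle H$ from there. (This is also how the paper handles it, in one line.)
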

\begin{proof}
The property $\triangle H=G$ can be written in terms of the functional equation (\ref{eq:FE}): \begin{align}
    K(x,y)H(x,y)=K(x,0)H(x,0)+K(0,y)H(0,y)-K(0,0)H(0,0)-xyG(x,y).
\end{align}
The case $K(0,0)\neq 0$ is easy, because then $1/K$ is a power series around $(0,0)$ and one can directly check that (\ref{eq:FE}) is satisfied. Consider now the case $K(0,0)=0$, and define $K(x,y)H(x,y)$ via (\ref{eq:ConstructPHF_def1}). The substitution $G(X_+,y)$ is valid because $X_+(0)=0$. One can check immediately that $K(x,0)H(x,0)=0$, and that (\ref{eq:FE}) is satisfied.\\
All that therefore remains to do is to show that we can divide the thusly obtained power series $K(x,y)H(x,y)$ by $K(x,y)$. To do so, we can utilize Lemma~\ref{lemma:Weierstrass}. To check the conditions to apply this lemma, note that $K(X_+,y)H(X_+,y)=0$ by construction. In order to satisfy the second condition, let $H'(x,y)$ be a harmonic function such that $K(x,0)H'(x,0)\neq 0$. We then have \begin{enumerate}
    \item $K(X_+,y)H(X_+,y)+K(X_+,y)H'(X_+,y)=0$, as is clear for the first summand from (\ref{eq:ConstructPHF_def1}) and for the second due to the fact that $K(X_+,y)=0$,
    \item $K(x,0)H(x,0)+K(x,0)H'(x,0)\neq 0$.
\end{enumerate}
We can therefore apply Lemma~\ref{lemma:Weierstrass} to the function $K(x,y)H(x,y)+K(x,y)H'(x,y)$. This tells us that the expression \begin{align}
    \frac{K(x,y)H(x,y)+K(x,y)H'(x,y)}{K(x,y)}=&\frac{K(x,y)H(x,y)}{K(x,y)}+\frac{K(x,y)H'(x,y)}{K(x,y)}\\
    =&H(x,y)+H'(x,y)
\end{align}
is a power series in $x,y$ around $(x,y)=(0,0)$. As we already know that $H'(x,y)$ is a power series, by consequence so is $H(x,y)$. Hence, we are done.
\end{proof}
As previously remarked, after potentially swapping $x$ and $y$ such that $\left.\frac{\partial}{\partial x} K\right|_{x=y=0}\neq 0$, this covers  all non-singular models with small steps and zero drift. Therefore, in the following we can assume without loss of generality that if $K(0,0)=0$, then $\frac{\partial}{\partial x}K(x,y)\mid_{x=y=0}\neq 0$. Utilizing Lemma~\ref{lemma:ConstructPHF}, it is now easy to construct a Schauder basis of all polyharmonic functions.
\begin{theorem}\label{thm:ConstructPHF}
Given a model with small steps and zero drift, and let \begin{align}\label{eq:defH1k}
    H_1^k(x,y):=\frac{P_k(\omega(x))-P_k\left(\omega(X_+)\right)}{K(x,y)},
\end{align}
where the polynomials $P_k$ are defined as in the proof of Thm.~\ref{thm:AllHF}. Then we can inductively construct bivariate power series $H_m^k$ via \begin{align}
\label{eq:definePHF1}
    H_{m+1}^k(x,y)&:=\frac{X_+ y H_m^k(X_+,y)-xyH_m^k(x,y)}{K(x,y)}\quad &\text{ if }K(0,0)=0,\\
    \label{eq:definePHF2}
    H_{m+1}^k(x,y)&:=\frac{xyH_m^k}{K(x,y)}\quad &\text{ if }K(0,0)\neq 0.
\end{align}
Each $H_m^k$ is $m$-polyharmonic, and we have $\triangle H_{m+1}^k=H_m^k$. 
\end{theorem}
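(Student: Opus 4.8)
The plan is a short induction on $m$: the base case is Theorem~\ref{thm:AllHF}, and the inductive step is a single application of Lemma~\ref{lemma:ConstructPHF} followed by the elementary identity $\triangle^{m+1}=\triangle^m\circ\triangle$. Throughout I use the standing normalization fixed just above the theorem: after possibly interchanging $x$ and $y$, either $K(0,0)\neq 0$, or $K(0,0)=0$ and $\left.\frac{\partial}{\partial x}K(x,y)\right|_{x=y=0}\neq 0$, with $X_+(y)$ in the latter case chosen so that $X_+(0)=0$. Under this normalization the two right-hand sides in (\ref{eq:definePHF1})--(\ref{eq:definePHF2}) are exactly the two cases (\ref{eq:ConstructPHF_def1})--(\ref{eq:ConstructPHF_def2}) of Lemma~\ref{lemma:ConstructPHF} taken with $G:=H_m^k$, so the stated recursion is well defined.

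I would propagate two assertions through the induction: that $H_m^k$ is a bivariate power series analytic at $(0,0)$, and that it is the generating function of an $m$-polyharmonic function in the sense of Definition~\ref{def:Discrete_PHF}. For $m=1$ both are furnished by Theorem~\ref{thm:AllHF}: the $H_1^k$ of (\ref{eq:defH1k}) are harmonic, and the Weierstra{\ss}-preparation argument in its proof shows they are honest power series. For the step, assume both for $H_m^k$ and apply Lemma~\ref{lemma:ConstructPHF} with $G:=H_m^k$; its hypotheses hold since the model is non-singular with small steps and zero drift and normalized as above, and $G$ is analytic at the origin. The lemma returns a power series which, by (\ref{eq:ConstructPHF_def1})--(\ref{eq:ConstructPHF_def2}), is precisely $H_{m+1}^k$ as given in (\ref{eq:definePHF1})/(\ref{eq:definePHF2}), and which (1) is analytic at $(0,0)$, (2) satisfies $\triangle H_{m+1}^k=H_m^k$, and (3) satisfies $K(x,0)H_{m+1}^k(x,0)=0$. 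Item (1) carries the first assertion forward. Item (2) is one of the two claims of the theorem, and, since it is produced through the functional equation (\ref{eq:FE}), it comes together with $H_{m+1}^k$ being the generating function of a lattice function $h_{m+1}^k$ which is $0$ on $\partial\VE$ and satisfies $\triangle h_{m+1}^k=h_m^k$ on $\VE^\circ$; item (3) is the additional ``canonical'' normalization (vanishing bottom boundary trace) that will be exploited in Lemma~\ref{lemma:criterion_AllPHF} when assembling a Schauder basis. Combining with the inductive hypothesis $\triangle^m h_m^k=0$ on $\VE^\circ$ gives $\triangle^{m+1}H_{m+1}^k=\triangle^m(\triangle H_{m+1}^k)=\triangle^m H_m^k=0$ on $\VE^\circ$, so $H_{m+1}^k$ is $(m+1)$-polyharmonic; this carries the second assertion forward, closing the induction and simultaneously yielding the relation $\triangle H_{m+1}^k=H_m^k$ claimed in the theorem.

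The main obstacle lies not in this induction but already inside Lemma~\ref{lemma:ConstructPHF}, namely the verification that dividing by $K(x,y)$ in (\ref{eq:definePHF1})--(\ref{eq:definePHF2}) returns a genuine power series rather than something singular along $\{K=0\}$; this is handled there via the Weierstra{\ss} preparation Lemma~\ref{lemma:Weierstrass}, after adding an auxiliary harmonic function to rule out the degenerate case $F(x,0)\equiv 0$. It is worth noting that the inductive step never uses polyharmonicity of $H_m^k$ --- Lemma~\ref{lemma:ConstructPHF} accepts any analytic $G$ --- so the polyharmonic degree is raised purely by $\triangle^{m+1}=\triangle^m\circ\triangle$; analyticity and degree thus decouple cleanly. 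Granted Lemmas~\ref{lemma:Weierstrass} and \ref{lemma:ConstructPHF}, the proof of Theorem~\ref{thm:ConstructPHF} is the bookkeeping above.
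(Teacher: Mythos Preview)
Your proposal is correct and takes essentially the same approach as the paper: the paper's proof is the two-line sentence ``By iterative application of Lemma~\ref{lemma:ConstructPHF}, one sees that the resulting expressions are power series in $x,y$. One can then easily check that the functional equation~(\ref{eq:FE}) is satisfied,'' and you have simply unpacked this into an explicit induction on $m$ with the same ingredients. Your added remarks about the boundary normalization $K(x,0)H_{m+1}^k(x,0)=0$ and the decoupling of analyticity from polyharmonic degree are accurate but not needed for the theorem itself.
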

\begin{proof}
By iterative application of Lemma~\ref{lemma:ConstructPHF}, one sees that the resulting expressions are power series in $x,y$. One can then easily check that the functional equation (\ref{eq:FE}) is satisfied. 
\end{proof}
\textbf{Remark:} instead of using the two different definitions (\ref{eq:definePHF1}), (\ref{eq:definePHF2}) depending on whether or not $K(0,0)=0$, one could just use (\ref{eq:definePHF1}) in any case. The disadvantage of that would be, however, fairly obvious: the resulting expressions are a bit more unwieldy, and we lose a bit of niceness (i.e. if $H_1^1(x,y)$ is rational, we would normally end up with an algebraic $H_2^1(x,y)$). Also, to make things work out formally one would still have to argue why all substitutions are valid.
\begin{theorem}
\label{thm:AllPHF_generalcase}
The polyharmonic functions $(H_m^k)_{m,k\in\N}$ constructed in Thm.~\ref{thm:ConstructPHF} form a basis of the space $\mathcal{H}$ of all polyharmonic functions.
\end{theorem}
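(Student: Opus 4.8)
The plan is to read this off from Lemma~\ref{lemma:criterion_AllPHF}, which was stated precisely so that it applies here: it asserts that \emph{any} family $(H_m^k)$ of polyharmonic functions with $H_1^k(x,y)=\frac{P_k(\omega(x))-P_k(\omega(X_+))}{K(x,y)}$ as in \eqref{eq:DefHF} and with $\triangle H_{m+1}^k=H_m^k$ is automatically a Schauder basis of every $\mathcal{H}_n$. So the entire argument reduces to (a) verifying those two hypotheses for the specific family produced by Thm.~\ref{thm:ConstructPHF}, and (b) passing from the spaces $\mathcal{H}_n$ to $\mathcal{H}=\bigcup_{n\in\N}\mathcal{H}_n$.

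For (a): the first hypothesis holds by construction, since in \eqref{eq:defH1k} the base functions $H_1^k$ are declared to be exactly the harmonic functions \eqref{eq:DefHF} with the polynomials $P_k$ from the proof of Thm.~\ref{thm:AllHF}, and that theorem (with the discussion following it) gives that these $H_1^k$ form a Schauder basis of $\mathcal{H}_1$. The second hypothesis, $\triangle H_{m+1}^k=H_m^k$, is literally the assertion of Thm.~\ref{thm:ConstructPHF}. Lemma~\ref{lemma:criterion_AllPHF} then delivers, for each $n$, a Schauder basis of $\mathcal{H}_n$ consisting of the $H_m^k$ with $m\le n$. For (b), note that any polyharmonic function lies in some $\mathcal{H}_n$, its expansion uses only the $H_m^k$ with $m\le n$, and this expansion is unchanged if one regards the function as an element of $\mathcal{H}_{n'}$ for $n'>n$; hence the coefficients are well defined and unique over the whole family, so $(H_m^k)_{m,k\in\N}$ is a Schauder basis of $\mathcal{H}$.

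The one point I would still check carefully is the convergence caveat raised in the remark after Lemma~\ref{lemma:criterion_AllPHF}: the general criterion may require replacing $(H_m^k)$ by a perturbed family $(\widehat{H}_m^k)$ to force sums $\sum_{k\ge1}a_kH_m^k$ to converge as formal power series. For the present construction this is unnecessary, and I would verify it by induction on $m$ that $\operatorname{deg}_0 H_m^k\to\infty$ as $k\to\infty$ for each fixed $m$. The case $m=1$ follows from the explicit behaviour of $\omega$ at the origin together with the shape of the $P_k$, which make the order of $P_k(\omega(x))-P_k(\omega(X_+))$ grow with $k$, and this survives the division by $K$ (which lowers the order at $0$ by at most one when $K(0,0)=0$, and not at all otherwise). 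The inductive step follows because each summand in the numerator of \eqref{eq:definePHF1} is $H_m^k$ times a factor vanishing at the origin, so passing to $H_{m+1}^k$ raises the order at $0$ before, and lowers it by at most one after, dividing by $K$. With this in hand we are in the simplified situation of the remark, and the only work left is the triangular uniqueness step: apply $\triangle^n$ to an expansion of an element of $\mathcal{H}_{n+1}$ to isolate its top layer $\sum_k a_{n+1,k}H_1^k$, pin those coefficients down using the basis property of $\mathcal{H}_1$, and conclude by induction on $n$.

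The main obstacle is thus not conceptual but consists in making sure the bookkeeping is airtight: that the base layer $H_1^k$ fed into Thm.~\ref{thm:ConstructPHF} is genuinely the Schauder basis of $\mathcal{H}_1$ from Thm.~\ref{thm:AllHF} (rather than merely a spanning set), and that the order-at-the-origin estimates above hold with enough uniformity in $k$ to license all the formal power series manipulations. Once those are settled, the theorem follows with essentially no further computation.
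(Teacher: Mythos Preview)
Your proposal is correct and follows exactly the paper's approach: the paper's proof is the single line ``By Lemma~\ref{lemma:criterion_AllPHF}'', and you have simply unpacked the verification of that lemma's two hypotheses for the family built in Thm.~\ref{thm:ConstructPHF}. Your additional discussion of the order-at-the-origin estimate is useful but not strictly required, since Lemma~\ref{lemma:criterion_AllPHF} already handles the general case via the auxiliary family $(\widehat{H}_m^k)$; the remark you cite only says the proof \emph{simplifies} when such convergence holds, not that it is needed for the conclusion.
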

\begin{proof}
By Lemma~\ref{lemma:criterion_AllPHF}.
\end{proof}

\subsection{Example: the tandem walk}\label{sec:example_TandemWalk_general}
The tandem walk is the model with step set $\mathcal{S}=\{\nwarrow, \rightarrow,\downarrow\}$, each with weight $\frac{1}{3}$. We find \begin{align}
    K(x,y)=\frac{x^2+y+xy^2}{3}-xy, \quad H_1^1(x,y)=\frac{81(xy-1)}{4(x-1)^3(y-1)^3},
\end{align}
leading to the harmonic function $h(i,j)=(i+1)(j+1)(i+j+2)$. We now want to find a biharmonic function $H_2^1$ such that $\triangle H_2^1=H_1^1$. To do so, we apply the procedure from Thm.~\ref{thm:AllPHF_generalcase}. First, we notice that $K(0,0)=0$, and that $\left.\frac{\partial K}{\partial x}\right|_{x=y=0}= 0$, while on the other hand we have $\left.\frac{\partial K}{\partial x}\right|_{x=y=0}=\frac{1}{3}\neq 0$. This is due to the fact that our model has no West, but a South step. Therefore, we need to swap the roles of $x$ and $y$ in (\ref{eq:ConstructPHF_def1}). We pick our $Y_+$ such that $Y_+(0)=0$; which gives us \begin{align}
    Y_+(x)=\frac{3x-1+(1-x)\sqrt{1-4x}}{2x},
\end{align}
and obtain \begin{footnotesize}\begin{multline}
    H_2^1(x,y)=\\
    -\frac{243((a-3)x^3-2xy+4x^2y+(7-3a)x^3y+2x^2y^2+(3a-13)x^3y^2+2x^4y^2+(3-a)x^3y^3)}{8(x-1)^5(y-1)^5(x^2+y+xy^2-3xy)},
\end{multline}
\end{footnotesize}
where $a:=\sqrt{1-4x}$. One can check that this expression is indeed a power series, and that $H_2^1(0,y)=0$. In particular, we have \begin{align}
    H_2^1(x,y)=\frac{243}{4}x+\frac{729}{4}xy+\frac{729}{2}xy^2+972 x^2y+\frac{5103}{4}x^3+\dots
\end{align}
As we already know, $H_2^1(x,y)$ is unique with the property $\triangle H_2^1=H_1^1$ only up to harmonic functions. And indeed, we will see in Section~\ref{sec:Decoupling} that instead of this algebraic function, there is a much nicer rational biharmonic function $\widehat{H_2^1}$ with $\triangle\widehat{H}_2^1=\widehat{H}_1^1$.

\subsection{Example: the king's walk}\label{sec:example_KingsWalk_general}
The king's walk is the model with step set $\mathcal{S}=\{\uparrow,\nearrow,\rightarrow,\searrow,\downarrow,\swarrow,\leftarrow,\nwarrow\}$, each with probability $\frac{1}{8}$. We find \begin{align}
    K(x,y)&=\frac{1+x+y+x^2+y^2+x^2y+xy^2+x^2y^2}{8}-xy,\\ H_1^1(x,y)&=-\frac{1}{16(x-1)^2(y-1)^2}.
\end{align}
As $K(0,0)\neq 0$, we can utilize (\ref{eq:ConstructPHF_def2}) and have \begin{align}
    H_2^1(x,y)=\frac{-128xy}{(x-1)^2(y-1)^2(1+x+y+x^2+y^2+x^2y+xy^2+x^2y^2-8xy)}.
\end{align}

\section{Continuous Polyharmonic Functions}\label{sec:continuousPHF}
Although we are primarily interested in discrete polyharmonic functions, it is still worthwhile looking at their continuous analogue. The original motivation for looking at discrete polyharmonic functions was via asymptotics of path-counting problems, and we know that the scaling limit of random walks is -- under some conditions, which however are all satisfied here, see \cite{Limic} -- a Brownian motion. We would therefore expect the scaling limit of a polyharmonic function in the quarter plane to be somehow related to that Brownian motion in $\R^+\times\R^+$ as well. Any such Browian motion is defined by its covariance matrix $\Sigma = \begin{pmatrix} \sigma_{11} & \sigma_{12}\\ \sigma_{12} & \sigma_{22}\end{pmatrix}$, and its infinitesimal generator is the Laplace-Beltrami operator \begin{align}\label{eq:defContLap}
    \triangle=\frac{1}{2}\left(\sigma_{11}\frac{\partial^2}{\partial x^2}+2\sigma_{12}\frac{\partial ^2}{\partial x\partial y}+\sigma_{22}\frac{\partial^2}{\partial y^2}\right).
\end{align} 
The coefficients $\sigma_{11},\sigma_{12},\sigma_{22}$ can be directly computed via $\mathbb{E}X^2=\sigma_{11},
    \mathbb{E}XY=\sigma_{12},
    \mathbb{E}Y^2=\sigma_{22}$ \cite{Limic}.
\begin{definition}\label{def:PHF_Continuous}
We call a  function $f$ a \textbf{(continuous) polyharmonic function of degree $k$} if \begin{align}
    \triangle^kf(x)&=0\quad\forall x\in\mathcal{W},\\
    f(x)&=0\quad \forall x\in\partial\mathcal{W},
\end{align}
where $\triangle$ is the Laplace-Beltrami operator given by (\ref{eq:defContLap}).
\end{definition}
Note that this definition is exactly the same as for discrete polyharmonic functions, except for the latter we used a discretization of the Laplacian. Also, while in the following there might be technically an ambiguity due to the same symbol $\triangle$ used for both the continuous and discrete Laplacian, it should always be clear from the context which one is to be used. \\
Similar to how discrete polyharmonic functions appear in the asymptotics of some path-counting problems, their continuous analogues occur when studying the asymptotics of exit times of Brownian motions \cite[VI.]{Aronszajn},\cite[Thm.~2.3]{Poly}.\\
The theory of continuous polyharmonic functions is well-developed, and computing them in a region as nice as the quarter plane is not a big challenge anymore. A common approach is to switch to polar coordinates, and then consider eigenfunctions of the resulting spherical Laplacian, as in \cite{Poly,Griffiths}. However, one can also make use of the fact that the two-dimensional Laplace transform $\mathcal{L}(f)$ of a polyharmonic function $f$ satisfies the following functional equation which can be seen as a continuous analogue of (\ref{eq:FE}), and was derived in \cite[App.~A]{Conformal}:
\begin{align}\label{eq:FE_continuous}
\gamma(x,y)\mathcal{L}(f)(x,y)=\frac{1}{2}\left[\sigma_{11}\mathcal{L}_1(f)(y)+\sigma_{22}\mathcal{L}_2(f)(x)\right]+\mathcal{L}(\triangle f)(x,y),
\end{align}
where we have\begin{small}\begin{align}
    \gamma(x,y)&=\frac{1}{2}\left(\sigma_{11}x^2+2\sigma_{12}xy+\sigma_{22}y^2\right),\quad L(f)(x,y)=\int_0^\infty\int_0^\infty e^{-ux-vy}f(u,v)\mathrm{d}u\mathrm{d}v,\\
    L_1(f)(y)&=\int_0^\infty \frac{\partial f}{\partial x}(0,v)e^{-vy}\mathrm{d}v,\quad\qquad\qquad
    L_2(f)(x)=\int_0^\infty\frac{\partial f}{\partial y}(u,0)e^{-ux}\mathrm{d}u,
\end{align}
\end{small}\noindent
see also \cite[2.2]{Poly}. This functional equation, which is very similar to (\ref{eq:FE}), is a rather large hint that there might be a connection between discrete and continuous polyharmonic functions; and in some cases one can use very similar methods to compute continuous polyharmonic functions. For the harmonic and biharmonic cases this has already been done via a direct computation in \cite[2.2]{Poly}; this will be generalized in Section~\ref{sec:Decoupling_continuous}.
\subsection{Relations between discrete and continuous cases}
The goal of this section is to give an overview of some of the similarities and general connections between discrete and continuous polyharmonic functions based on the functional equations (\ref{eq:FE}) and (\ref{eq:FE_continuous}).\vskip\baselineskip
For harmonic functions, since the last term of (\ref{eq:FE_continuous}) vanishes, everything works as in the discrete case, except the calculations turn out to be a lot simpler. We can define continuous versions $x_\pm$ of $X_\pm$, which satisfy $\gamma(x_\pm(y),y)=0$. It turns out that we have \begin{align}
    x_\pm(y)&=c_\pm y,\\
    c_\pm&=c e^{\pm i\theta},\\
    \hat{\omega}(x)&=\frac{1}{x^{\pi/\theta}},
\end{align}
where $\theta$ is the one mentioned in Section~\ref{sec:Prelims} and $c\in\R^+$. We can then construct (continuous) harmonic functions via \begin{align}\label{eq:defHF_continuous}
    \mathcal{L}(h_1^n)(x,y):=\frac{\omega(x)^n-\omega(x_+(y))^n}{\gamma(x,y)},
\end{align}
see also \cite[Thm.~2.4]{Poly}. Not very surprisingly, there is a relation between the discrete and continuous polyharmonic functions constructed in this manner. For the computations here as well as in later sections, the following lemma will be useful:
\begin{lemma}\label{lemma:convergence_toolkit1}
We have \begin{align}\label{eq:convergence_toolkit1_1}
    \lim_{\mu\to 0}\frac{K\left(e^{-\mu x},e^{-\mu y}\right)}{\mu^2}&=\gamma(x,y),\\
    \label{eq:convergence_toolkit1_2}
    \lim_{\mu\to 0}X_\pm \left(e^{-\mu y}\right)&=1+c_\pm y+\mathcal{O}(y^2).
\end{align}
\end{lemma}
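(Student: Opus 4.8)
The plan is to prove both limits by straightforward Taylor expansion, using the zero-drift assumption to kill the first-order terms. For \eqref{eq:convergence_toolkit1_1}, I would substitute $x\mapsto e^{-\mu x}$, $y\mapsto e^{-\mu y}$ into the definition $K(x,y)=xy\left(\sum_{(i,j)\in\mathcal{S}}p_{i,j}x^{-i}y^{-j}-1\right)$. The prefactor $xy$ becomes $e^{-\mu(x+y)}=1+\mathcal{O}(\mu)$, so it does not affect the limit of $K/\mu^2$ provided the bracketed factor is $\mathcal{O}(\mu^2)$, which is exactly what needs checking. Writing $e^{-\mu x}=1-\mu x+\frac{\mu^2x^2}{2}+\mathcal{O}(\mu^3)$ and similarly for $y$, expand $e^{\mu i x}e^{\mu j y}=1+\mu(ix+jy)+\frac{\mu^2}{2}(ix+jy)^2+\mathcal{O}(\mu^3)$. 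Then $\sum p_{i,j}e^{\mu(ix+jy)}-1$ has: constant term $\sum p_{i,j}-1=0$ (probabilities sum to $1$); linear term $\mu\left(x\sum ip_{i,j}+y\sum jp_{i,j}\right)=0$ by the zero-drift hypothesis (iii); and quadratic term $\frac{\mu^2}{2}\sum p_{i,j}(ix+jy)^2=\frac{\mu^2}{2}\left(x^2\mathbb{E}[i^2]+2xy\mathbb{E}[ij]+y^2\mathbb{E}[j^2]\right)$. Since $\sigma_{11}=\sum i^2p_{i,j}$, $\sigma_{12}=\sum ijp_{i,j}$, $\sigma_{22}=\sum j^2p_{i,j}$, this is precisely $\mu^2\gamma(x,y)$, and dividing by $\mu^2$ and taking $\mu\to 0$ gives the result.

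For \eqref{eq:convergence_toolkit1_2}, I would argue that $X_\pm(e^{-\mu y})$ is a branch of the algebraic function solving $K(X,e^{-\mu y})=0$, and track it as $\mu\to 0$. At $\mu=0$ we have $K(X,1)=0$; since $D(x)=b(x)^2-4a(x)c(x)$ has a double root at $x=1$ (the zero-drift fact cited from \cite{Conformal,Book}), both branches $X_\pm$ coalesce at the value $1$, so $\lim_{\mu\to 0}X_\pm(e^{-\mu y})=1$. To get the next-order term, set $X=1+\varepsilon$ and $e^{-\mu y}=1-\mu y+\mathcal{O}(\mu^2)$, i.e. perturb both arguments of $K$ away from $(1,1)$, and expand $K$ to second order around $(1,1)$. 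Because $(1,1)$ is a critical point of the kernel restricted appropriately (again a consequence of zero drift — the gradient of $K$ vanishes at $(1,1)$), the leading balance is quadratic: $K(1+\varepsilon,1-\mu y)\approx \tfrac12\left(K_{xx}\varepsilon^2 + 2K_{xy}\varepsilon(-\mu y)+K_{yy}\mu^2y^2\right)=0$, where the second derivatives of $K$ at $(1,1)$ are proportional to $\sigma_{11},\sigma_{12},\sigma_{22}$. Solving this quadratic for $\varepsilon$ in terms of $\mu y$ yields $\varepsilon = \mu y\cdot\frac{-\sigma_{12}\pm\sqrt{\sigma_{12}^2-\sigma_{11}\sigma_{22}}}{\sigma_{11}}=\mu y\cdot\frac{-\sigma_{12}\pm i\sqrt{\sigma_{11}\sigma_{22}-\sigma_{12}^2}}{\sigma_{11}}$; comparing with the continuous curves $x_\pm(y)=c_\pm y$, $c_\pm=ce^{\pm i\theta}$ and the formula \eqref{eq:defTheta} for $\theta$, one identifies this factor as $c_\pm$, giving $X_\pm(e^{-\mu y})=1+c_\pm\mu y+\mathcal{O}(\mu^2)$ — up to the harmless discrepancy between the $\mathcal{O}(y^2)$ in the statement and the $\mathcal{O}(\mu^2)$ one actually gets (the statement presumably means the expansion of the limiting behaviour in the relevant scaling, matching \eqref{eq:convergence_toolkit1_1}).

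The main obstacle is the bookkeeping in the second limit: I need to be careful about which branch of the square root corresponds to $X_+$ versus $X_-$, and about the fact that near $(1,1)$ the discriminant $D$ is negative (so the two branches are complex conjugates), matching the statement that $x_+ = c_+ y$ and $x_- = c_- y$ with $c_\pm = ce^{\pm i\theta}$. The cleanest route is to note that $\gamma(x_\pm(y),y)=0$ defines $x_\pm$, and that \eqref{eq:convergence_toolkit1_1} already shows $\mu^{-2}K(e^{-\mu x},e^{-\mu y})\to\gamma(x,y)$ uniformly on compacts; so an implicit-function / Hurwitz-type argument transfers the roots of $\mu^{-2}K$ in the variable $x$ (near the degenerate root $x=1$, after rescaling $x = 1 - \mu\xi$) to the roots of $\gamma$ in $\xi$, which are exactly $\xi = -c_\pm y$. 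That makes \eqref{eq:convergence_toolkit1_2} essentially a corollary of \eqref{eq:convergence_toolkit1_1} rather than an independent computation, which is the slick way to present it; the routine Taylor-coefficient verification that $\partial_x K(1,1)=\partial_y K(1,1)=0$ and the identification of the Hessian with $\Sigma$ I would relegate to a one-line remark citing the zero-drift reductions already recorded in Section~\ref{sec:Prelims}.
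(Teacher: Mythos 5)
Your proof of \eqref{eq:convergence_toolkit1_1} is exactly what the paper does: substitute, Taylor-expand, kill the constant and linear terms with $\sum p_{i,j}=1$ and zero drift, and identify the quadratic term with $\gamma$. For \eqref{eq:convergence_toolkit1_2} the paper takes a slightly different route: it writes Vieta's relations
\[
X_+(e^{-z})\,X_-(e^{-z})=\frac{\tilde c(e^{-z})}{\tilde a(e^{-z})},\qquad X_+(e^{-z})+X_-(e^{-z})=-\frac{\tilde b(e^{-z})}{\tilde a(e^{-z})},
\]
uses $X_+(1)=X_-(1)=1$ as the starting value, and determines the next coefficient of the series in $z$ from these two equations. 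Your version — expand $K$ to second order at the degenerate critical point $(1,1)$, note that $\nabla K(1,1)=0$ and that the Hessian equals $\Sigma$, then solve the resulting homogeneous quadratic $\sigma_{11}\varepsilon^2+2\sigma_{12}\varepsilon\delta+\sigma_{22}\delta^2=0$ for $\varepsilon$ — is the same computation organized differently, and arguably more transparent because it makes the appearance of $\gamma$ structural rather than the outcome of a bookkeeping exercise. The Hurwitz / implicit-function shortcut you sketch at the end is also viable, but note that \eqref{eq:convergence_toolkit1_1} as stated is pointwise; you would need to upgrade it to locally uniform convergence (easy here, since the error terms in the Taylor expansion are uniform on compacts) before Hurwitz applies.

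Two things to tighten. First, there is a sign slip in the quadratic-formula step: with $a=\sigma_{11}$, $b=-2\sigma_{12}\mu y$, $c=\sigma_{22}\mu^2y^2$, the root is $\varepsilon=\mu y\,\bigl(\sigma_{12}\pm\sqrt{\sigma_{12}^2-\sigma_{11}\sigma_{22}}\bigr)/\sigma_{11}$, not $\mu y\,(-\sigma_{12}\pm\cdots)/\sigma_{11}$; equivalently, writing $\delta:=e^{-\mu y}-1=-\mu y+\mathcal O(\mu^2)$, one finds $\varepsilon=\delta\,c_\pm$, i.e. $X_\pm(e^{-\mu y})=1-\mu c_\pm y+\mathcal O(\mu^2)\approx e^{-\mu c_\pm y}$. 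This is the form actually needed downstream (in Thm.~\ref{thm:convergence_HF}, one plugs this into $\omega$, which has a pole-type singularity in $1-x$). Your sign slip happens to reproduce the imprecise ``$1+c_\pm y+\mathcal O(y^2)$'' in the lemma's display, which — as you correctly flag — should be read as a statement about the coefficient of $\mu$, not a literal limit. Second, the branch-labeling caveat you raise is genuine but harmless: the $\pm$ of the quadratic formula and the $\pm$ of $X_\pm$ are only matched up to a global swap, and since both pairs are exchanged by the same complex conjugation on the relevant arc, the identification is consistent.
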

\begin{proof}
Both of the results follow by a direct computation, which however in the second case is somewhat tedious. The main idea there is to write \begin{align}
    X_+(e^{-z})X_-(e^{-z})&=\frac{\tilde{c}(e^{-z})}{\tilde{a}(e^{-z})},\\
    X_+(e^{-z})X_-(e^{-z})&=-\frac{\tilde{b}(e^{-z})}{\tilde{a}(e^{-z})},
\end{align}
with $X_\pm(y)$ the solutions of $K(\cdot,y)=0$ as defined in Section~\ref{sec:Prelims},
and then use the fact that $X_+(1)=X_-(1)=1$ in order to obtain defining equations for the first coefficients in a series expansion of $X_\pm(e^{-z})$. 
\end{proof}
\textbf{Remark:} While for (\ref{eq:convergence_toolkit1_1}) it can be seen that this is a direct consequence of the drift being zero, it would be interesting to know if there is a more intuitive, or geometric way to obtain (\ref{eq:convergence_toolkit1_2}) as well.\vskip 1\baselineskip
Comparing the discrete and continuous constructions of harmonic functions (\ref{eq:DefHF}) and (\ref{eq:defHF_continuous}), it is not very surprising that there is a clear relation between them. 
\begin{theorem}\label{thm:convergence_HF}
We have \begin{align}
    \lim_{\mu\to 0}\mu^{k\pi/\theta-2}H_1^k\left(e^{-\mu x},e^{-\mu y}\right)=\alpha \mathcal{L}(h_1^k)(x,y)
\end{align}
for some non-zero constant $\alpha$.
\end{theorem}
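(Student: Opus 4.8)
The plan is to expand both sides around $\mu=0$ using Lemma~\ref{lemma:convergence_toolkit1} and the explicit behaviour of $\omega$ near $x=1$. The key observation is that $H_1^k$ is built from $\omega$ via the formula (\ref{eq:defH1k}), and $\omega$ has a pole-like singularity of order $\pi/\theta$ at $x=1$. Since substituting $x=e^{-\mu x}$ sends $x\to 1$ as $\mu\to 0$, the numerator $P_k(\omega(e^{-\mu x}))-P_k(\omega(X_+(e^{-\mu y})))$ will blow up like $\mu^{-k\pi/\theta}$, while the denominator $K(e^{-\mu x},e^{-\mu y})$ vanishes like $\mu^2$ by (\ref{eq:convergence_toolkit1_1}); multiplying by $\mu^{k\pi/\theta-2}$ exactly compensates both, leaving a finite limit.

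First I would treat the denominator: by (\ref{eq:convergence_toolkit1_1}), $\mu^{-2}K(e^{-\mu x},e^{-\mu y})\to\gamma(x,y)$. Next, the numerator. Using the local form $\omega(x)=\frac{x(1+p(x))}{(1-x)^{\pi/\theta}}$ near $x=1$ (quoted from \cite{Book} in the proof of Thm.~\ref{thm:AllHF}) — or, in the $K(0,0)\neq 0$ case, after the harmless rescaling of $\omega$ that does not affect the order of the singularity — we get $\omega(e^{-\mu x})\sim C(\mu x)^{-\pi/\theta}$ as $\mu\to 0$, for some constant $C$ independent of $x$. Hence $\mu^{\pi/\theta}\omega(e^{-\mu x})\to C x^{-\pi/\theta}=C\,\widehat\omega(x)$ up to the constant $c$ hidden in the continuous $\widehat\omega$. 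For the second term, by (\ref{eq:convergence_toolkit1_2}) we have $X_+(e^{-\mu y})\to 1$ with $X_+(e^{-\mu y})=1+c_+\mu y+\mathcal{O}(\mu^2)$, so $1-X_+(e^{-\mu y})\sim -c_+\mu y$ and therefore $\mu^{\pi/\theta}\omega(X_+(e^{-\mu y}))\to C(-c_+ y)^{-\pi/\theta}=C'\,\widehat\omega(x_+(y))$ with $x_+(y)=c_+ y$ the continuous curve from (\ref{eq:defHF_continuous}); the constants $C,C'$ must agree up to the same factor since $\omega\circ X_+$ has the same singular behaviour at $1$ as $\omega$ (noted at the end of Section~\ref{sec:Prelims}, via \cite[Cor.~5.3.5]{Book}). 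Now, since $P_k$ is a polynomial whose leading term is $z^k$ (with lower-order corrections involving $d_0$ in the $K(0,0)\ne 0$ case, which are subdominant as $z\to\infty$), we get $\mu^{k\pi/\theta}P_k(\omega(e^{-\mu x}))\to C^k\widehat\omega(x)^k$ and likewise for the $X_+$ term. Combining, $\mu^{k\pi/\theta-2}H_1^k(e^{-\mu x},e^{-\mu y})\to C^k\frac{\widehat\omega(x)^k-\widehat\omega(x_+(y))^k}{\gamma(x,y)}$, which by (\ref{eq:defHF_continuous}) equals $\alpha\,\mathcal{L}(h_1^k)(x,y)$ with $\alpha=C^k$ up to normalization constants — in particular $\alpha\neq 0$.

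The main obstacle I anticipate is making the two singular expansions rigorous uniformly enough to pass to the limit, and in particular checking that the leading constants from the $\omega(x)$ branch and the $\omega(X_+(y))$ branch really do match so that the difference in the numerator does not itself degenerate — this is where the fact that $\omega\circ X_+$ is conformal with the same order-$\pi/\theta$ behaviour at $1$ is essential. A secondary technical point is the $K(0,0)\ne 0$ case, where $P_k$ is not simply $z^k$: one must argue that the extra factors $(z-d_0)$ contribute only a multiplicative constant in the limit since $d_0$ stays bounded while $\omega\to\infty$. I would also want to be a little careful that the convergence in (\ref{eq:convergence_toolkit1_2}) of $X_+(e^{-\mu y})$ is strong enough (e.g.\ that $X_+(e^{-\mu y})$ genuinely tends to $1$ from the side making $1-X_+$ of the right sign, so that $(-c_+ y)^{-\pi/\theta}$ is the correct branch), but this follows from the placement of the curve $X_\pm([y_1,1])$ discussed in Section~\ref{sec:Prelims}. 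None of these should obstruct the argument; they are matters of bookkeeping around a singular point.
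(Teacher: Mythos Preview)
Your proposal is correct and follows essentially the same route as the paper: split off the denominator via (\ref{eq:convergence_toolkit1_1}), use the pole-like behaviour $\omega(x)\sim\alpha(1-x)^{-\pi/\theta}$ near $x=1$ together with $[z^k]P_k(z)=1$ for the numerator, and invoke (\ref{eq:convergence_toolkit1_2}) for the $X_+$-term. The only caveat is that the expansion $\omega(x)=\frac{x(1+p(x))}{(1-x)^{\pi/\theta}}$ you quote from the proof of Thm.~\ref{thm:AllHF} is a local form around $0$, not around $1$; the relevant input is the behaviour at $x=1$, which the paper takes from \cite[2.2]{FR11} (and which you also correctly invoke via Section~\ref{sec:Prelims}).
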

\begin{proof}
Using Lemma~\ref{lemma:convergence_toolkit1}, all we need to show is that \begin{align}
    \lim_{\mu\to 0}\mu^{k\pi/\theta} \left[P_k(\omega(x))-P_k(\omega(X_+))\right]=\hat{\omega}(x)^k-\hat{\omega}(x_+)^k.
\end{align}
But this follows immediately from the fact that $\omega(x)=\frac{\alpha+o(1)}{(1-x)^{\pi/\theta}}$ in a neighbourhood of $x=1$ \cite[2.2]{FR11} and the fact that $[z^k]P_k(z)=1$ (see the construction of $P_k(z)$ in the proof of Thm.~\ref{thm:AllHF}), as well as for the second term once again Lemma~\ref{lemma:convergence_toolkit1}. 
\end{proof}

Knowing that, in the sense of a scaling limit as in Thm.~\ref{thm:convergence_HF}, we know that the discrete kernel $K$ corresponds to the continuous kernel $\gamma$, and that the Laplace transform can be understood as the continuous analogue of a generating function, it would be reasonable to expect that the boundary term $K(x,0)H(x,0)$ corresponds in the same fashion to $\frac{\sigma_{22}}{2}\mathcal{L}_2(h)(x)$. In the following, we will see that this is indeed the case.
\begin{lemma}\label{lemma:boundary_convergence_1}
Suppose $h(u,v)$ and its derivatives up to order $2$ are of exponential order, i.e. their absolute value is asymptotically bounded by $e^{c(x+y)}$ for some constant $c$, with $h(u,0)=h(0,v)=0$. Then we have \begin{align}
    \lap_2(h)(x)&=\lim_{y\to\infty}y^2\lap(h)(x,y),\\
    \lap_1(h)(y)&=\lim_{x\to\infty}x^2\lap(h)(x,y).
\end{align}
\end{lemma}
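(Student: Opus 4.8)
The plan is to recognize the right-hand side limits as a two-variable Watson-type lemma / dominated convergence argument applied to the Laplace transform. Writing out $y^2\lap(h)(x,y) = y^2\int_0^\infty\int_0^\infty e^{-ux-vy}h(u,v)\,\mathrm{d}u\,\mathrm{d}v$, I would substitute $v = w/y$ in the inner integral to get $y\int_0^\infty\int_0^\infty e^{-ux}e^{-w}h(u,w/y)\,\mathrm{d}u\,\mathrm{d}w$. Since $h(u,0)=0$ and $h$ is $C^2$, Taylor's theorem gives $h(u,w/y) = \tfrac{\partial h}{\partial v}(u,0)\cdot\tfrac{w}{y} + O(w^2/y^2)$, so that $y\cdot h(u,w/y) \to w\,\tfrac{\partial h}{\partial v}(u,0)$ pointwise as $y\to\infty$. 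Formally interchanging limit and integral, and using $\int_0^\infty w e^{-w}\,\mathrm{d}w = 1$, yields $\int_0^\infty e^{-ux}\tfrac{\partial h}{\partial v}(u,0)\,\mathrm{d}u = \lap_2(h)(x)$, which is exactly the claim. The second identity follows by the symmetric argument, substituting $u = w/x$ and using $h(0,v)=0$.

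The key steps in order: first, perform the rescaling substitution in the inner integral to isolate the factor that tends to a pointwise limit; second, invoke the hypotheses $h(u,0)=0$ and $C^2$-smoothness to Taylor-expand $h(u,w/y)$ in its second argument to first order, producing the derivative that appears in $\lap_2$; third, justify the interchange of limit and double integral; fourth, evaluate the resulting Gamma-type integral $\int_0^\infty w e^{-w}\,\mathrm{d}w$.

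The main obstacle is the third step — justifying the interchange of limit and integral — and this is precisely where the exponential-order hypothesis on $h$ and its derivatives is needed. One wants a dominating function independent of $y$ (say for $y \geq 2c$) valid on $(0,\infty)^2$ in the $(u,w)$ variables. From the $C^2$ bound one has, by Taylor with integral remainder, $|h(u,w/y)| \leq \tfrac{w}{y}\sup_{0\le t\le w/y}|\tfrac{\partial h}{\partial v}(u,t)| \leq \tfrac{w}{y} C e^{c(u + w/y)}$, so $y|h(u,w/y)|e^{-ux} \leq C w\, e^{(c-x)u} e^{cw/y}$; for $x$ fixed with $x > c$ and $y \geq 1$ this is dominated by $C w\, e^{(c-x)u} e^{cw}$, which is integrable over $(0,\infty)^2$ once we further restrict to the region where the $w$-exponent is negative, i.e. we take the limit $y\to\infty$ so that effectively $e^{cw/y}\to 1$ uniformly on compact $w$-sets and is controlled by $e^{w}$ globally provided $c<1$; if $c\geq 1$ one first rescales $h$ or restricts to $y$ large enough that $c/y<1$. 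Hence dominated convergence applies, and the interchange is legitimate; the remaining computations are routine. I would remark that the hypothesis can be relaxed, but exponential order is the natural condition ensuring the Laplace transforms in $(\ref{eq:FE_continuous})$ converge in a half-plane in the first place.
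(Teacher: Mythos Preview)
Your argument is correct, but it takes a different route from the paper. The paper's proof is a one-line integration-by-parts identity: integrating $\int_0^\infty e^{-vy}h(u,v)\,\mathrm{d}v$ twice by parts in $v$ (using $h(u,0)=0$) gives the exact relation
\[
y^2\,\lap(h)(x,y)\;=\;\lap\!\left(\frac{\partial^2 h}{\partial v^2}\right)(x,y)\;+\;\lap_2(h)(x),
\]
after which one only needs $\lap(\partial_v^2 h)(x,y)\to 0$ as $y\to\infty$, which follows from dominated (the paper says monotone) convergence and the exponential bound on $\partial_v^2 h$. Your approach instead rescales $v=w/y$, Taylor-expands $h(u,w/y)$ to first order, and appeals to dominated convergence for the resulting family; this is the classical Watson-lemma mechanism. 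Both arguments use exactly the same hypotheses (Dirichlet condition plus exponential control of second derivatives), but the paper's version avoids the slightly delicate uniform-domination bookkeeping you had to do at the end: with the exact identity in hand, the only limit to justify is that a fixed Laplace transform tends to zero as one variable goes to infinity, which is immediate. Conversely, your method is more flexible --- it would extend, for instance, to recovering $\lap_2$ from an asymptotic of $y^\alpha\lap(h)$ for non-integer $\alpha$, where no clean integration-by-parts identity is available.

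One small cleanup: in your domination step, the constraint you actually need is $c/y<1$, i.e.\ $y>c$, so that $e^{-w}e^{cw/y}\le e^{-(1-c/y)w}$ is integrable against $w$; the phrasing ``provided $c<1$'' is a slip, but you correct yourself immediately afterward.
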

\begin{proof}
We will only show the first equality, the second follows by symmetry. From the computation in \cite[App.~A]{Conformal}, we see that 
\begin{align}
    \lap(h)(x,y)&=\frac{1}{y^2}\left[\lap\left(\frac{\partial^2 h}{\partial v^2}\right)(x,y)+\lap_2(h)(x)\right]\quad\Leftrightarrow\\
    y^2\lap(h)(x,y)&=\lap\left(\frac{\partial^2h}{\partial v^2}\right)(x,y)+\lap_2(h)(x).
\end{align}
Therefore, all that remains to show is that $\lim_{y\to\infty}\lap\left(\frac{\partial^2 h}{\partial y^2}\right)=0$. But this follows by monotone convergence using the growth assumption on $\frac{\partial^2 h}{\partial v^2}$. 
\end{proof}
\textbf{Remark:} The condition that $h(u,v)$ and its derivatives are of exponential order will hold true for those polyharmonic functions which have their origins in asymptotics of exit times of Brownian motions.
\begin{lemma}\label{lemma:boundary_convergence_2}
Let $\alpha\in\R$, and $H(x,y), \lap(h)(x,y)$ be polyharmonic such that \begin{align}
    \lim_{\mu\to 0}\mu^{\alpha+2} H\left(e^{-\mu x},e^{-\mu y}\right)=\lap(h)(x,y).
\end{align}
Assume furthermore that $H(x,y)$ is algebraic, and that the restrictions of $H(x,y)$ at $x=0$ and $y=0$ are well-defined. 
Then we have \begin{align}
    \lim_{\mu\to 0}\mu^\alpha K(e^{-\mu x},0)H(e^{-\mu x},0)&=\frac{\sigma_{22}}{2}\lap_2(h)(x),\\
    \lim_{\mu\to 0}\mu^\alpha K(0,e^{-\mu y})H(0,e^{-\mu y})&=\frac{\sigma_{11}}{2}\lap_1(h)(y).
\end{align}
\end{lemma}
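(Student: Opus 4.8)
The plan is to reduce the claimed limit to the two-dimensional convergence we are handed, via the characterization of the boundary terms as iterated limits that was established in Lemmas~\ref{lemma:boundary_convergence_1} and \ref{lemma:convergence_toolkit1}. The starting observation is the discrete analogue of Lemma~\ref{lemma:boundary_convergence_1}: the functional equation (\ref{eq:FE}) lets us recover $K(x,0)H(x,0)$ as a limit of the bivariate object $K(x,y)H(x,y)$ as $y\to$ (the relevant boundary of the $y$-disk). Concretely, from (\ref{eq:FE}) one has
\begin{align}
K(x,0)H(x,0)=\lim_{y\to 1}\bigl[K(x,y)H(x,y)-K(0,y)H(0,y)+K(0,0)H(0,0)+xy(\triangle H)(x,y)\bigr],
\end{align}
and since $H$ is polyharmonic, iterating gives that the only surviving terms are the boundary ones; for a harmonic $H$ this already isolates $K(x,0)H(x,0)$ as $y\to 1$ of $K(x,y)H(x,y)$ minus the $x=0$ boundary contribution. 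So first I would write $K(e^{-\mu x},0)H(e^{-\mu x},0)$ in terms of such a limit, then commute it with the scaling limit $\mu\to 0$.

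Next I would substitute $y=e^{-\mu \eta}$ and let $\eta\to\infty$ in coordination with $\mu\to 0$. By hypothesis $\mu^{\alpha+2}H(e^{-\mu x},e^{-\mu\eta})\to\lap(h)(x,\eta)$, and by Lemma~\ref{lemma:convergence_toolkit1} we have $\mu^{-2}K(e^{-\mu x},e^{-\mu\eta})\to\gamma(x,\eta)$; multiplying, $\mu^\alpha K(e^{-\mu x},e^{-\mu\eta})H(e^{-\mu x},e^{-\mu\eta})\to\gamma(x,\eta)\lap(h)(x,\eta)$. Now I apply the continuous Lemma~\ref{lemma:boundary_convergence_1}: letting $\eta\to\infty$, the quantity $\eta^2\lap(h)(x,\eta)$ converges to $\lap_2(h)(x)$, so $\gamma(x,\eta)\lap(h)(x,\eta)=\tfrac12(\sigma_{11}x^2+2\sigma_{12}x\eta+\sigma_{22}\eta^2)\lap(h)(x,\eta)$ converges — its dominant piece is $\tfrac{\sigma_{22}}{2}\eta^2\lap(h)(x,\eta)\to\tfrac{\sigma_{22}}{2}\lap_2(h)(x)$, while the $x^2$ and $x\eta$ terms vanish because $\lap(h)(x,\eta)=O(\eta^{-2})$. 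On the discrete side the matching statement is that $K(0,y)H(0,y)$, $K(0,0)H(0,0)$ and the $xy(\triangle H)$ term, after scaling by $\mu^\alpha$, either vanish or are captured by the $x$-boundary (which we project away by the $y$-limit), so in the limit the $y\to 1$ (i.e. $\eta\to\infty$) operation extracts precisely $\mu^\alpha K(e^{-\mu x},0)H(e^{-\mu x},0)$. Matching the two gives $\lim_{\mu\to 0}\mu^\alpha K(e^{-\mu x},0)H(e^{-\mu x},0)=\tfrac{\sigma_{22}}{2}\lap_2(h)(x)$; the second identity follows by exchanging the roles of $x$ and $y$.

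The algebraicity hypothesis and the well-definedness of $H$ at $x=0$, $y=0$ enter to justify two interchanges of limits that are the technical heart of the argument: first, that $\lim_{\mu\to 0}$ and $\lim_{\eta\to\infty}$ commute when applied to $\mu^\alpha K H$; second, that the restriction $H(e^{-\mu x},0)$ is itself the $\mu\to 0$ limit of the $y\to 1$ restriction of $H(e^{-\mu x},e^{-\mu\eta})$, uniformly enough near the relevant boundary. For an algebraic function one can make these rigorous using a Puiseux expansion near $x=1$ (resp.\ $y=1$): $H$ has a controlled pole-type singularity there — indeed a $k\pi/\theta$-type blow-up coming from the $\omega$-factors — which is exactly what the weight $\mu^{\alpha}$ renormalizes, and algebraicity guarantees the expansion is convergent so the double limit is a genuine iterated limit of a single analytic branch. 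I would also quietly use that $K(0,0)H(0,0)$ scaled by $\mu^{\alpha}$ tends to $0$ (since $\alpha>0$ in all cases of interest and $K(0,0)H(0,0)$ is a constant), and that the cross term $e^{-\mu x}e^{-\mu\eta}(\triangle H)(e^{-\mu x},e^{-\mu\eta})$ vanishes in the limit because $\triangle H$ is polyharmonic of lower degree, hence scales with a strictly smaller power of $\mu^{-1}$.

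The main obstacle I expect is precisely the interchange of the two limits $\mu\to 0$ and $y\to 1^-$ (equivalently the uniformity of the scaling convergence near the boundary point $(1,1)$, where $H$ and $K$ both degenerate). The pointwise statements for each limit separately are routine given Lemmas~\ref{lemma:convergence_toolkit1} and \ref{lemma:boundary_convergence_1}, but pinning down that the boundary restriction commutes with the scaling requires genuinely using the algebraic structure — a Puiseux/Weierstraß analysis of $H$ near $x=1$, much in the spirit of the Weierstraß-preparation arguments already used in Lemma~\ref{lemma:Weierstrass} and the proof of Thm.~\ref{thm:AllHF} — rather than any soft argument.
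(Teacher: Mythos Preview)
Your approach matches the paper's: both express each side as an iterated limit of $\mu^\alpha K(e^{-\mu x},e^{-\mu y})H(e^{-\mu x},e^{-\mu y})$ (taking $\mu\to 0$ then $y\to\infty$, respectively the other order) and reduce the statement to the interchange of these two limits, justified by algebraicity of $H$. The paper is more direct on the discrete side, though: since $e^{-\mu y}\to 0$ as $y\to\infty$ and $H$ is assumed well-defined at $y=0$, one has $K(e^{-\mu x},0)H(e^{-\mu x},0)=\lim_{y\to\infty}K(e^{-\mu x},e^{-\mu y})H(e^{-\mu x},e^{-\mu y})$ by plain continuity, so your detour through the functional equation~(\ref{eq:FE}) and the extra boundary and $xy\triangle H$ terms is unnecessary.
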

\begin{proof}
Using Lemma~\ref{lemma:boundary_convergence_1}, we have
    \begin{align}
    \lim_{\mu\to 0}\mu^\alpha KH(e^{-\mu x},0)&=\lim_{\mu\to 0}\lim_{y\to \infty}\mu^\alpha KH(e^{-\mu x},e^{-\mu y}),\\
    \frac{\sigma_{22}}{2}\lap_2(h)(x)&=\lim_{y\to\infty}\gamma(x,y)\lap(h)(x,y)\\
    &=\lim_{y\to \infty}\lim_{\mu\to 0}\mu^\alpha KH(e^{-\mu x},e^{-\mu y}).
\end{align}
Thus, all we need to show is that we can exchange the order of the two limits. But this follows by the algebraicity of $H(x,y)$.
\end{proof}
\textbf{Remark:} There is a marked difference between the discrete and continuous cases in terms of the value of formal solutions. In the discrete case we work with formal power series, i.e. every formal solution of the functional equation (\ref{eq:FE}) leads to an actual solution since we can just extract coefficients. In the continuous case, however, this is not so simple: there are formal solutions of (\ref{eq:FE_continuous}) which turn out not to have an inverse Laplace transform. It is always possible to utilize the method given in Section~\ref{sec:generalSolution} to obtain continuous (formal) solutions, by simply defining $\mathcal{L}(h_n^k)$ as the scaling limit -- with an appropriate scaling factor -- of the discrete polyharmonic function $H_n^k$, and then by some computations using Lemmas~\ref{lemma:boundary_convergence_1} and \ref{lemma:boundary_convergence_2} one can check that (\ref{eq:FE_continuous}) is indeed satisfied. But the resulting solutions do generally not allow for an inverse Laplace transform: due to the shape of the kernel, which we repeatedly divide by, we cannot usually find a region of the form $\{\Re(x)\geq u,\Re(y)\geq v\}$ where $\lap(h_i^k)(x,y)$ is finite. By \cite{2dimLaplace}, this implies that $\lap(h_i^k)(x,y)$ is not the Laplace transform of any function, nor of any distribution. This will be different for the method presented in Section~\ref{sec:Decoupling}.

\subsection{Example: the scaling limit of the tandem walk}
For the scaling limit of the tandem walk (see Example~\ref{sec:example_TandemWalk_general}), we have \begin{align}
    \gamma(x,y)=\frac{1}{3}\left(x^2-xy+y^2\right),\quad
    c_\pm=\frac{1\pm i\sqrt{3}}{2},\quad
    \widehat{\omega}(x)=\frac{1}{x^3}.
\end{align}
We obtain $\lap(h_1^1)(x,y)=\frac{\widehat{\omega}(x)-\widehat{\omega}(c_+y)}{\gamma(x,y)}=\frac{3(x+y)}{x^3y^3}$, and one can check immediately that \begin{align}
    \lim_{\mu\to 0}\mu^5H_1^1\left(e^{-\mu x},e^{-\mu y}\right)=\lap(h_1^1)(x,y)
\end{align}
for $H_1^1$ computed in Section~\ref{sec:example_TandemWalk_general}. We can then proceed to the scaling limit of $H_2^1$, which gives us the formal solution of (\ref{eq:FE_continuous})
\begin{align}
    \lap(h_2^1)(x,y)=\frac{3 x^3 + 2 x y^2 + 2 y^3}{ 
x^3 y^5 (x^2 - x y + y^2)},
\end{align}
of which one can check directly that there is no inverse Laplace transform. We will see in Section~\ref{sec:exampleTandemC} that this is an advantage of the construction done in Section~\ref{sec:Decoupling} using decoupling functions, where the scaling limit of the resulting biharmonic function will properly be the Laplace transform of a continuous biharmonic function.
\section{Decoupling}\label{sec:Decoupling}
While the method given in Section~\ref{sec:generalSolution} gives us a Schauder basis of all polyharmonic functions, the resulting basis is not ideal in two senses:
\begin{enumerate}
    \item They do not have a continuous analogue, as discussed at the end of Section~\ref{sec:continuousPHF};
    \item They are often more complicated than necessary; for the king's walk we obtained a rational function which is singular on some not so easily described curve in Section~\ref{sec:example_KingsWalk_general}, and for the tandem walk the functions constructed in Section~\ref{sec:example_TandemWalk_general} were not even rational. We will see that both of these models have a basis which is a lot nicer to work with.
\end{enumerate}
Remember that the main issue why computing polyharmonic functions is not as easy as computing harmonic functions is that the right-hand side of (\ref{eq:DecouplingBVP}) does not usually vanish, and therefore the BVP approach does not immediately work. But in some cases, one can circumvent this problem by utilizing what is called a decoupling function in \cite[Def.~4.7]{Tutte}. 
\begin{definition} Let $M(x,y)$ be an rational function in $x,y$. If we can find $F(x), G(y)$ such that \begin{align}\label{eq:def:Decoupling}
    F(x)+G(y)\equiv M(x,y)\quad\operatorname{mod} K(x,y),
\end{align}
then we say that $F$ is a \textbf{decoupling function} of $M$.
\end{definition}
Here, we say that $A(x)\equiv B(x)\operatorname{mod} K(x,y)$ if there are polynomials $N(x,y),D(x,y)$ such that $D(x,y)$ is not divisible by $K(x,y)$ and $A(x)-B(y)=\frac{N(x,y)}{D(x,y)}K(x,y)$.
\vskip 1\baselineskip

These decoupling functions are closely related to the concept of invariants as in \cite[Def.~4.3]{Tutte}. An example of a decoupling function will for instance be given in Section~\ref{sec:exampleTandem}. Let in the following $H'$ be polyharmonic, and $H$ be such that $\triangle H=H'$. By substitution into (\ref{eq:DecouplingBVP}), we directly find that for $F(x)$ a decoupling function of $xyH'(x,y)$ we have \begin{align}\label{eq:def:Decoupling2}
    K(X_+,0)H(X_+,0)-F(X_+)-\left[K(X_-,0)H(X_-,0)-F(X_-)\right]=0.
\end{align}
In other words, if one knows how to compute a decoupling function of $xyH'(x,y)$, then one can again let $K(x,0)H(x,0)-F(x)=P(\omega)$ for some entire function $P$; by the same arguments as for the BVP outlined in Section~\ref{sec:DiscretePHF} one will then eventually arrive at a solution for $H(x,y)$. In \cite[App.~C]{Poly}, a decoupling function is guessed using an ansatz (as illustrated in Section~\ref{sec:guessing}) in order to compute a biharmonic function for the tandem walk. It turns out, however, that such a decoupling function can be explicitly computed for any model as long as the so-called group of the corresponding step set is finite. This group is generated by the mappings 
\begin{align}
    \Phi: (x,y)\mapsto \left(x^{-1}\frac{\tilde{c}(y)}{\tilde{a}(y)},y\right),\quad 
    \Psi: (x,y)\mapsto \left(x,y^{-1}\frac{c(x)}{a(x)}\right).
\end{align}
One can easily see that $\Phi,\Psi$ are involutions, and depending on the order of $\Theta:=\Phi\circ\Psi$, the group can be either finite or infinite. This group has been of interest in the study of random walks for some time now, see e.g. \cite{MBM,Book,Singer}. In particular, every group element $\gamma$ has a representation either of the form $\gamma=\Theta^k$ or $\gamma=\Phi\circ\Theta^k$. We can define $\operatorname{sgn}\gamma=1$ in the first, and $\operatorname{sgn}\gamma=-1$ in the second case. \vskip 1\baselineskip

\textbf{Remark:} In the literature, the group being finite is sometimes used ambiguously: here, as well as for example in \cite{MBM2,MBM}, the group being finite means it is finite as a group generated by the two birational transformations $\Phi,\Psi$. Some other times (e.g. in \cite{Book,Singer}), the group is understood as the restriction of these transformations to the curve $\mathcal{C}:=\{(x,y)\in \overline{\C}^2: K(x,y)=0\}$, where only the restriction of $\mathcal{G}$ on $\mathcal{C}$ would need to be finite, which is a weaker statement, and equivalent to the fact that $\pi/\theta\in\Q$ \cite[7.1]{Book}. These two notions are indeed different, as can be seen e.g. in Example~\ref{sec:pitheta2_infinite_example}. In this article, the group being finite means it being finite in the stronger sense, that is as a group of birational mappings on all of $\overline{\C}^2$.
\begin{theorem}[{see \cite[Thm.~4.11]{Tutte}}]\label{thm:Tutte}
Suppose our step set has a finite group of order $2n$, and $M(x,y)$ is rational such that \begin{align}\label{eq:SignedOrbit}
    \sum_{\gamma\in\mathcal{G}}\operatorname{sgn}(\gamma)\gamma\left(M(x,y)\right)=0.
\end{align}
Then a rational decoupling function of $M(x,y)$ is given by \begin{align}\label{eq:DecouplingFormula}
    F(x)=-\frac{1}{n}\sum_{i=1}^{n-1}\Theta^i\left[M(x,Y_+)+M(x,Y_-)\right].
\end{align}
\end{theorem}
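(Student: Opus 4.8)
The plan is to verify directly that the claimed $F(x)$ in \eqref{eq:DecouplingFormula} satisfies the decoupling identity \eqref{eq:def:Decoupling} modulo $K(x,y)$. The key observation is that, working modulo $K(x,y)$, the two roots $Y_+(x)$ and $Y_-(x)$ are precisely the two branches of the algebraic curve $\{K(x,y)=0\}$ over $x$, so for any rational function $R(x,y)$ the symmetric combination $R(x,Y_+)+R(x,Y_-)$ is a rational function of $x$ alone (it is fixed by swapping the two branches, hence lies in $\C(x)$ by Vieta). Consequently every term $\Theta^i\left[M(x,Y_+)+M(x,Y_-)\right]$ appearing in \eqref{eq:DecouplingFormula} is, after reduction mod $K$, a genuine univariate rational function of $x$, so $F(x)$ as defined is well-posed as a decoupling candidate.

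First I would set up the action of the group on the curve $\mathcal{C}=\{K=0\}$. On $\mathcal{C}$, the involution $\Psi$ exchanges $Y_+(x)$ and $Y_-(x)$ while fixing $x$, and $\Phi$ exchanges $X_+(y)$ and $X_-(y)$ while fixing $y$; thus $\Theta=\Phi\circ\Psi$ generates a cyclic action of order $n$ on $\mathcal{C}$ (the group has order $2n$). I would write $M^\sharp(x):=M(x,Y_+)+M(x,Y_-)$, which equals $\Psi(M)+M$ restricted to $\mathcal C$ in the appropriate sense, i.e. $M^\sharp = (1+\Psi)M$ on the curve. Then \eqref{eq:DecouplingFormula} reads $F = -\tfrac1n\sum_{i=1}^{n-1}\Theta^i (1+\Psi)M$ on $\mathcal C$. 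The goal is to show $M - F(x) - G(y) \equiv 0 \pmod{K}$ for a suitable $G$; equivalently, that $M - F(x)$, viewed on $\mathcal C$, is invariant under $\Psi$ (being $\Psi$-invariant and already rational in $y$ after reduction means it depends on $y$ only, giving $G$).

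Next I would compute $(1-\Psi)\big(M-F\big)$ on $\mathcal C$ and show it vanishes. Since $\Psi^2=\mathrm{id}$ we have $(1-\Psi)(1+\Psi)=0$, and since $\Theta^i = \Phi\Psi\Phi\Psi\cdots$ one checks the telescoping identity
\begin{align}
(1-\Psi)\sum_{i=1}^{n-1}\Theta^i(1+\Psi)M \;=\; \sum_{i=1}^{n-1}\big(\Theta^i-\Psi\Theta^i\big)(1+\Psi)M,
\end{align}
and using $\Psi\Theta^i = \Theta^{-i+1}\Psi \cdot(\text{something})$ — more cleanly, using $\Psi = \Theta^{-1}\Phi^{-1}=\Theta^{-1}\Phi$ so that $\Psi\Theta^i=\Theta^{i-1}\Phi$ — the sum collapses. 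Carrying this out, the left side reduces to $n\,(1-\Psi)M$ minus the full signed orbit sum $\sum_{\gamma\in\mathcal G}\operatorname{sgn}(\gamma)\gamma(M)$, which is $0$ by hypothesis \eqref{eq:SignedOrbit}. Hence $(1-\Psi)(M - F) = 0$ on $\mathcal C$, so $M-F$ depends (mod $K$) on $y$ alone; call it $-G(y)$, and we get $F(x)+G(y)\equiv M(x,y)\pmod K$. Finally I would note $F$ is rational: each $\Theta^i(1+\Psi)M$ is a rational function on $\mathcal C$ invariant under $\Psi$ in the first slot, hence pulls back to an element of $\C(x)$, and the sum of rational functions is rational.

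The main obstacle I anticipate is bookkeeping the group action precisely — in particular, tracking how $\Psi$ and $\Phi$ interact with the substitutions $y\mapsto Y_\pm(x)$ versus $x\mapsto X_\pm(y)$, since $\Theta^i$ mixes both variables and the formula \eqref{eq:DecouplingFormula} specializes $y$ to $Y_\pm$ only at the end. One must be careful that \textquotedblleft reducing mod $K(x,y)$\textquotedblright{} and \textquotedblleft restricting to $\mathcal C$\textquotedblright{} are used consistently, and that the signs $\operatorname{sgn}(\gamma)$ match the $\pm$ pattern produced by the telescoping (each application of $\Psi$ or $\Phi$ flipping the sign). Once the correspondence between the coset decomposition $\gamma=\Theta^k$ or $\gamma=\Phi\Theta^k$ and the terms of the telescoped sum is pinned down, the identity \eqref{eq:SignedOrbit} plugs in directly and the proof closes. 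This is essentially the argument of \cite[Thm.~4.11]{Tutte}, which I would cite for the details of the orbit-sum manipulation.
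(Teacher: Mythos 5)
Your overall strategy --- restrict to the curve $\mathcal{C}=\{K=0\}$, check that $F$ is $\Psi$-invariant (hence a rational function of $x$), then check that $M-F$ is invariant under the \emph{other} involution (hence a function of $y$) by reducing to the alternating orbit sum --- is the right one. But two things go wrong. First, the roles of the involutions are mixed up: on $\mathcal{C}$ the $\Psi$-invariant functions are exactly the functions of $x$ (since $\Psi$ fixes $x$ and swaps $Y_\pm$), and the $\Phi$-invariant ones are the functions of $y$. So $\Psi$-invariance is what must be checked for $F$ itself --- and note that the individual terms $\Theta^i\left[M(x,Y_+)+M(x,Y_-)\right]$ are \emph{not} separately functions of $x$, since $\Psi$ sends the $i$-th term to the $(n-i)$-th and only the full sum is $\Psi$-invariant --- whereas to produce $G(y)$ you must show that $M-F$ is $\Phi$-invariant, i.e.\ compute $(1-\Phi)(M-F)$, not $(1-\Psi)(M-F)$.

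Second, and more seriously, the telescoping you assert is never carried out, and when one does carry it out it does not produce the full signed orbit sum. Writing $\gamma(H):=H\circ\gamma$ and using the identities $\Theta^i\Psi=\Phi\Theta^{1-i}$ and $\Theta^i\Phi=\Phi\Theta^{-i}$, a direct bookkeeping of the group elements occurring in (\ref{eq:DecouplingFormula}) gives
\begin{align*}
(1-\Phi)\bigl(M-F\bigr)\;=\;\frac{n-1}{n}\bigl(M-\Phi(M)\bigr)+\frac{1}{n}\bigl(\Theta^{-1}(M)-\Psi(M)\bigr),
\end{align*}
because the \emph{unweighted} partial sum $\sum_{i=1}^{n-1}\Theta^i$ collapses under $1-\Phi$ to boundary terms only. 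For $n=2$ this is $\tfrac12$ times the alternating orbit sum and (\ref{eq:SignedOrbit}) finishes the argument, but for $n\geq 3$ it is not a multiple of the orbit sum: one can assign values of $M$ on a generic orbit satisfying (\ref{eq:SignedOrbit}) for which the right-hand side is nonzero. The full orbit sum only emerges if the $i$-th term of the sum carries weight $i$ (equivalently $n-i$), which is how the formula reads in the cited reference; your telescoping argument closes for that weighted version, and the discrepancy is invisible in the paper's examples only because the relevant partial sums happen to vanish there. As written, then, the crucial step of the proof is both unverified and false, so the argument does not establish the statement; the orbit-sum manipulation has to be done explicitly, and doing so forces the arithmetic weights into (\ref{eq:DecouplingFormula}).
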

In the following, we will show that $xyH_n(x,y)$ will turn out to have an orbit sum of $0$ for any polyharmonic $H_n$. This is in particular independent of whether or not the given model has a vanishing orbit sum as in \cite{Tutte}. 

\begin{corollary}\label{prop:oSum}
Suppose the group of the step set is finite and has a series representation around $(0,0)$. Then any algebraic function $M(x,y)$ of the form \begin{align}
    M(x,y)=xy\frac{u(x)+v(y)}{K(x,y)}
\end{align}
allows for a decoupling function via (\ref{eq:DecouplingFormula}).
\end{corollary}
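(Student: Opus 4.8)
The plan is to verify the one hypothesis of Theorem~\ref{thm:Tutte}, namely that the signed orbit sum $\sum_{\gamma\in\mathcal{G}}\operatorname{sgn}(\gamma)\,\gamma\big(M(x,y)\big)$ vanishes; formula~(\ref{eq:DecouplingFormula}) then produces a decoupling function of $M$ directly. The whole argument rests on a single observation: the rational function $\tfrac{xy}{K(x,y)}$ is invariant under the entire group $\mathcal{G}$. To see this, write $K(x,y)=xy\big(S(x,y)-1\big)$ with $S(x,y)=\sum_{(i,j)\in\mathcal{S}}p_{i,j}x^{-i}y^{-j}$, so that $\tfrac{xy}{K(x,y)}=\tfrac{1}{S(x,y)-1}$ and it is enough to check that $S(x,y)-1=\tfrac{K(x,y)}{xy}$ is fixed by each generator. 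For $\Phi$ the first coordinate of $\Phi(x,y)$ is $\bar x=\tfrac{\tilde c(y)}{\tilde a(y)x}$, and a short computation using $K(x,y)=\tilde a(y)x^2+\tilde b(y)x+\tilde c(y)$ gives $K(\bar x,y)=\tfrac{\tilde c(y)}{\tilde a(y)x^2}K(x,y)$, whence $\tfrac{K(\bar x,y)}{\bar x\,y}=\tfrac{K(x,y)}{xy}$; the case of $\Psi$ is symmetric, with $\bar y=\tfrac{c(x)}{a(x)y}$ and $K(x,\bar y)=\tfrac{c(x)}{a(x)y^2}K(x,y)$. Since invariance is preserved under composition, $\tfrac{xy}{K}$ is $\mathcal{G}$-invariant.

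Writing $\gamma=(\gamma_x,\gamma_y)$ for the two coordinate functions of $\gamma\in\mathcal{G}$, the invariance just established gives, for every $\gamma$,
\[
\gamma(M)=\frac{xy}{K(x,y)}\,\Big(u(\gamma_x)+v(\gamma_y)\Big),
\qquad\text{hence}\qquad
\sum_{\gamma\in\mathcal{G}}\operatorname{sgn}(\gamma)\,\gamma(M)=\frac{xy}{K(x,y)}\sum_{\gamma\in\mathcal{G}}\operatorname{sgn}(\gamma)\big(u(\gamma_x)+v(\gamma_y)\big).
\]
It therefore remains to show $\sum_{\gamma}\operatorname{sgn}(\gamma)\,u(\gamma_x)=0$ and $\sum_{\gamma}\operatorname{sgn}(\gamma)\,v(\gamma_y)=0$, which I would do by a pairing argument. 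Recall that $\operatorname{sgn}$ is the word-length parity homomorphism $\mathcal{G}\to\{\pm1\}$ (well-defined since the relations $\Phi^2=\Psi^2=(\Phi\circ\Psi)^n=e$ all have even length), so $\operatorname{sgn}(\Psi\circ\gamma)=-\operatorname{sgn}(\gamma)$ for every $\gamma$; moreover $\Psi$ leaves the first coordinate unchanged, so $(\Psi\circ\gamma)_x=\gamma_x$. As $\gamma\mapsto\Psi\circ\gamma$ is a fixed-point-free involution of $\mathcal{G}$, it partitions $\mathcal{G}$ into pairs $\{\gamma,\Psi\circ\gamma\}$ on each of which the two contributions $\operatorname{sgn}(\gamma)u(\gamma_x)$ and $\operatorname{sgn}(\Psi\circ\gamma)u((\Psi\circ\gamma)_x)$ cancel, so the first sum is $0$. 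The vanishing of the second sum follows in exactly the same way, pairing via $\Phi$ instead (which fixes the second coordinate). Thus the signed orbit sum of $M$ is $0$, Theorem~\ref{thm:Tutte} applies, and $F$ as in~(\ref{eq:DecouplingFormula}) is a decoupling function of $M$.

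The finiteness of $\mathcal{G}$ and the assumption that it has a series representation around $(0,0)$ are used only to guarantee that the orbit of $M$ is well-defined there and that the right-hand side of~(\ref{eq:DecouplingFormula}) makes sense — and, when $u$ and $v$ are rational, that the resulting $F$ is rational; for genuinely algebraic $u,v$ the same computation runs over the appropriate algebraic extension with no change to the proof of Theorem~\ref{thm:Tutte}. I expect the only step needing real care to be the scalar identity $K(\bar x,y)=\tfrac{\tilde c(y)}{\tilde a(y)x^2}K(x,y)$ and its $\Psi$-analogue, i.e.\ correctly tracking the factors that the Vieta relations $X_+X_-=\tilde c/\tilde a$ and $Y_+Y_-=c/a$ contribute when a root-swapping map is substituted into $K$; once these are in hand, both the $\mathcal{G}$-invariance of $xy/K$ and the sign-pairing bookkeeping are routine. (This in particular covers the motivating case $M=xy\,H_1$ for a harmonic generating function $H_1$, since $K(x,y)H_1(x,y)=P(\omega(x))-P(\omega(X_+(y)))$ by~(\ref{eq:buildHarmonic}) is already of the required form $u(x)+v(y)$.)
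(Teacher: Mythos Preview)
Your proof is correct and follows essentially the same approach as the paper: both first note that $\tfrac{xy}{K(x,y)}$ (equivalently $\tfrac{K(x,y)}{xy}$) is $\mathcal{G}$-invariant, then argue that the alternating orbit sum of $u(x)+v(y)$ vanishes, and finally invoke Theorem~\ref{thm:Tutte}. The paper simply asserts the invariance and calls the vanishing a ``telescopic sum'', whereas you spell out the invariance computation and phrase the cancellation as a pairing by left multiplication with $\Psi$ (for the $u$-sum) and $\Phi$ (for the $v$-sum); these are the same mechanism, and your version is in fact the more transparent of the two.
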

\begin{proof}
For any point $(x,y)$ such that $K(x,y)\neq 0$, (\ref{eq:SignedOrbit}) is satisfied, seeing as the denominator $\frac{1}{xy}K(x,y)$ is invariant under $\mathcal{G}$, and alternating orbit summation over the numerator leads to a telescopic sum. As the set $\{(x,y): K(x,y)\neq 0\}$ is dense and $M(x,y)$ is algebraic, this implies that (\ref{eq:SignedOrbit}) is satisfied everywhere. By Thm.~\ref{thm:Tutte}, we can therefore construct a decoupling function via (\ref{eq:DecouplingFormula}).
\end{proof}
If a model has a finite group, then it can be shown that $\pi/\theta\in\mathbb{Q}$ (cf \cite[7.1]{Book}). The main difference between $\pi/\theta$ being integer or not is that in the former case, the conformal mapping $\omega(x)$ will be rational, and thus we can construct a basis consisting of rational functions.\\
To make things work out nicely in this case, we need to start wich a small technical lemma.
\begin{lemma}\label{lemma:decoupling_dividekernel}
    Let $N(x,y)$ be a polynomial such that $N(X_+,y)=N(X_-,y)=0$. Then, $K(x,y)|N(x,y)$.
    \end{lemma}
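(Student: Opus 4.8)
The statement is a standard divisibility fact for the kernel, which is quadratic in $x$. Recall that $K(x,y) = a(x)y^2 + b(x)y + c(x) = \tilde a(y) x^2 + \tilde b(y) x + \tilde c(y)$, so as a polynomial in $x$ over the ring $\C[y]$ (or $\C(y)$), $K(x,y)$ has degree $2$ with leading coefficient $\tilde a(y)$, and its two roots are precisely $X_+(y)$ and $X_-(y)$. The plan is simply to exhibit the factorization and then argue that the cofactor is a genuine polynomial.

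First I would work in the ring $R[x]$ where $R = \C[y]$, or more comfortably in $R'[x]$ with $R' = \C(y)$ the field of rational functions in $y$. Over $R'$, we have the factorization
\begin{align}
K(x,y) = \tilde a(y)\,(x - X_+(y))(x - X_-(y)).
\end{align}
Since $N(X_+,y) = N(X_-,y) = 0$, the polynomial $N(x,y) \in R'[x]$ vanishes at both roots $X_\pm(y)$ of $K(\cdot,y)$. If $X_+(y) \neq X_-(y)$ (which holds away from the branch points, and in particular as an identity of rational/algebraic functions since they are not identically equal for a non-degenerate model), then $(x - X_+)(x-X_-)$ divides $N(x,y)$ in $R'[x]$, hence $K(x,y) \mid N(x,y)$ in $\C(y)[x]$. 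This gives $N(x,y) = K(x,y) Q(x,y)$ for some $Q(x,y) \in \C(y)[x]$.

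Next I would upgrade this to divisibility in $\C[x,y]$, i.e. show $Q(x,y)$ is actually a polynomial and not merely a rational function in $y$. Two routes are available: (i) a Gauss-lemma-type argument — since $K(x,y)$, viewed in $\C[y][x]$, has content $1$ (its coefficients $a(x)$, $b(x)$, $c(x)$ in the $y$-expansion, or rather $\tilde a(y), \tilde b(y), \tilde c(y)$ in the $x$-expansion, have gcd $1$ because $K$ is irreducible or at worst has no univariate factor for a non-degenerate model), a product $K \cdot Q$ with $N \in \C[x,y]$ forces $Q \in \C[x,y]$; or (ii) perform polynomial division of $N(x,y)$ by $K(x,y)$ directly in $\C[y][x]$: since $K$ has leading coefficient $\tilde a(y)$ in $x$, division yields $N = K \cdot Q + \tilde a(y)^{-k} R$ with $\deg_x R < 2$; but then $R(X_\pm, y) = 0$ forces the linear-in-$x$ remainder $R$ to vanish identically, and clearing the $\tilde a(y)^{-k}$ denominator shows $K \mid N$ cleanly. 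Route (ii) is probably cleanest to write down and avoids invoking irreducibility of $K$.

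**Main obstacle.** The only subtlety is the passage from divisibility over $\C(y)$ to divisibility over $\C[y]$, i.e. making sure no spurious denominator in $y$ appears in the quotient — this is where one needs either the primitivity/content argument or the explicit remainder computation. A secondary point to address carefully is the degenerate case $X_+(y) \equiv X_-(y)$, i.e. when the discriminant $\tilde D(y)$ is identically zero; but this does not occur for the non-singular zero-drift models under consideration (the discriminant has isolated roots as noted in Section~\ref{sec:Prelims}), so one may exclude it, or handle it by noting that then $N$ vanishing to order one at the double root still suffices once one checks $\deg_x N \ge 1$ forces the factor $(x - X_+)$ out, and a short separate argument covers the multiplicity. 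I expect the whole proof to be three or four lines once the polynomial-division viewpoint is adopted.
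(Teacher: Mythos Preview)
Your overall strategy matches the paper's: factor $K$ over $\C(y)[x]$ as $\tilde a(y)(x-X_+)(x-X_-)$, deduce $(x-X_+)(x-X_-)\mid N$ there, and then upgrade to $\C[x,y]$. The gap is that you treat the upgrade as routine, when it is in fact the entire content of the lemma. Your route~(ii) does \emph{not} sidestep the issue: pseudo-division gives $\tilde a(y)^k N = KQ$ with vanishing remainder, but concluding $K\mid N$ from this still requires $\tilde a(y)$ and $K$ to be coprime in $\C[x,y]$, which is exactly the statement that $\gcd(\tilde a,\tilde b,\tilde c)=1$. Likewise, your route~(i) invokes ``$K$ is irreducible or at worst has no univariate factor for a non-degenerate model'' without justification; non-degeneracy alone does not obviously give this, and irreducibility of $K$ is not something the paper has available. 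The paper's proof is almost entirely devoted to showing that $\tilde a(y),\tilde b(y),\tilde c(y)$ have no common zero: summing the three equations $\tilde a(u)=\tilde b(u)=\tilde c(u)=0$ and using the zero-drift hypothesis yields a quadratic $\sigma_{11}u^2 - 2\sigma_{22}u + \sigma_{11}=0$, whose roots are then ruled out case by case (using $\sigma_{11},\sigma_{22}>0$ from non-singularity, and the sign of $\tilde b$ on $[0,1]$). So your plan is correct in outline, but the ``three or four lines'' estimate is off --- the work you are deferring is the proof.
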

    \begin{proof}
    First we note that $N(X_+,y)=N(X_-,y)=0$ implies that also $N(x,Y_+)=N(x,Y_-)=0$ via the substitution $x\mapsto X_\pm(y)$. Using the notation of Section~\ref{sec:continuousPHF} of $\sigma_{11}=\mathbb{E}(X^2),\sigma_{22}=\mathbb{E}(Y^2)$, we can therefore assume that $\sigma_{11}\geq\sigma_{22}$, else we switch the roles of $x$ and $y$ in the following.\\
    We write $K(x,y)=(x-X_+)(x-X_-)\tilde{a}(y)$. As $N(X_+,y)=N(X_-,y)=0$ for any $y$, we know that $N(x,y)$ contains a factor $(x-X_+)(x-X_-)=x^2-(X_- +X_+)x+X_-X_+=x^2+\frac{\tilde{b}(y)}{\tilde{a}(y)}x+\frac{\tilde{c}(y)}{\tilde{a}(y)}\in\C(y)[x]$. By assumption, $N(x,y)$ is a polynomial; thus it must also contain a factor $\frac{\tilde{a}(y)}{\gcd(\tilde{a}(y),\tilde{b}(y),\tilde{c}(y))}$. Therefore, it suffices to show that $\tilde{a}(y),\tilde{b}(y),\tilde{c}(y)$ have no common zero. We have \begin{align}
        \tilde{a}(y)&=p_{1,1}y^2+p_{1,0}y+p_{1,-1},\\
        \tilde{b}(y)&=p_{0,1}y^2-y+p_{0,-1},\\
        \tilde{c}(y)&=p_{-1,1}y^2+p_{-1,0}y+p_{-1,-1}.
    \end{align}
    Now suppose there is an $u$ such that $\tilde{a}(u)=\tilde{b}(u)=\tilde{c}(u)=0$. Adding these three equations gives \begin{align}\label{eq:decoupling_dividebykernel1}
        0=u^2\left[p_{1,1}+p_{0,1}+p_{-1,1}\right]-u\left[1-p_{1,0}-p_{-1,0}\right]+\left[p_{1,-1}+p_{0,-1}+p_{-1,-1}\right].
    \end{align}
    As the drift is $0$, we know that the coefficient of $u^2$ is the same as the constant, namely $\frac{1}{2}\sigma_{11}$, and the coefficient of $u$ is $\sigma_{22}$. Since our model is non-singular, we have $\sigma_{11},\sigma_{22}>0$. We can therefore rewrite (\ref{eq:decoupling_dividebykernel1}) as 
    \begin{align}
        \sigma_{11}u^2-2\sigma_{22}u+\sigma_{11}=0.
    \end{align}
    Using the quadratic formula, we obtain \begin{align}
        u=\frac{\sigma_{22}}{\sigma_{11}}\pm\sqrt{\left(\frac{\sigma_{22}}{\sigma_{11}}\right)^2-1}.
    \end{align}
    If $\sigma_{11}=\sigma_{22}$, then $u=1$, but we see that $\tilde{b}(1)<0\leq \tilde{a}(1),\tilde{b}(1)$.\\
    Therefore we must have $\sigma_{11}>\sigma_{22}$, so we have two complex conjugate solutions for $u$. However, as $\tilde{b}(0)>0$ and $\tilde{b}(1)<0$, we know that $\tilde{b}(y)$ can only have real solutions (note in particular that this does not change if $p_{0,1}=0$, in which case $\tilde{b}(y)$ is linear), so $\tilde{b}(u)=0$ cannot hold.\\
    Hence, $\tilde{a}(y),\tilde{b}(y),\tilde{c}(y)$ cannot have a common factor, and the statement follows.
    \end{proof}
Using the above lemma, we can now use decoupling functions to construct, in the case of a finite group with $\pi/\theta\in\Z$, rational discrete polyharmonic functions of a particularly nice shape.
\begin{theorem}\label{thm:buildPHF}
Suppose our step set has finite group and $\pi/\theta\in\mathbb{Z}$. Let $H_1^k(x,y)$ be defined by (\ref{eq:defH1k}). We can then define inductively \begin{align}\label{eq:buildPHF}
    H_{n}^k(x,y)=\frac{xyH_{n-1}^k(x,y)-F_{n-1}^k(x)-\left[X_+yH_{n-1}^k(X_+,y)-F_{n-1}^k(X_+,y)\right]}{K(x,y)},
\end{align}
where $F_n^k(x)$ is the decoupling function of $xyH_n^k(x,y)$ defined by (\ref{eq:DecouplingFormula}), which in particular exists. Then, $H_n^k(x,y)$ is a rational function in $\mathcal{H}_n$ for all $n,k$, which satisfies $\triangle H_{n+1}^k=H_n^k$. For each $n,k$ we can write \begin{align}\label{eq:formPHF}
    H_n^k(x,y)=\frac{p_{n,k}(x,y)}{(1-x)^{\alpha}(1-y)^{\alpha}},
\end{align}
where $p_{n,k}(x,y)$ is a polynomial and $\alpha\in\N$. 
\end{theorem}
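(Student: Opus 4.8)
The plan is to prove the three assertions of the theorem in order: (a) the $H_n^k$ are well-defined \emph{rational} functions that are analytic at $(0,0)$; (b) they lie in $\mathcal{H}_n$ with $\triangle H_{n+1}^k = H_n^k$; and (c) they have the specific shape (\ref{eq:formPHF}) with a common denominator $(1-x)^\alpha(1-y)^\alpha$ for a suitable $\alpha\in\N$. The induction is on $n$, with the base case $n=1$ already handled by Thm.~\ref{thm:AllHF} together with the fact that $\pi/\theta\in\Z$ forces $\omega(x)$ to be rational (it has a pole of order $\pi/\theta$ at $x=1$ and is a conformal map of $\mathcal{G}$, so it is a rational function with a single pole at $x=1$, giving $H_1^k$ the form (\ref{eq:formPHF})).

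For the inductive step, assume $H_{n-1}^k$ is rational, analytic at $(0,0)$, and of the form (\ref{eq:formPHF}). First I would invoke Corollary~\ref{prop:oSum}: since $xyH_{n-1}^k(x,y) = xy\,\frac{\text{(numerator)}}{K(x,y)}$ is algebraic (indeed rational) of the shape covered there — one needs to observe that, modulo the boundary-correction we are about to perform, the numerator splits as $u(x)+v(y)$, or more robustly that the relevant orbit-sum argument only uses that $\frac{1}{xy}K(x,y)$ is $\mathcal{G}$-invariant and the numerator telescopes — the decoupling function $F_{n-1}^k(x)$ exists and is rational, given explicitly by (\ref{eq:DecouplingFormula}). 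Then by (\ref{eq:def:Decoupling2}) the numerator $N(x,y) := xyH_{n-1}^k(x,y) - F_{n-1}^k(x) - [X_+yH_{n-1}^k(X_+,y) - F_{n-1}^k(X_+)]$ of (\ref{eq:buildPHF}) vanishes at $x = X_+(y)$ by construction, and a symmetry/substitution argument shows it also vanishes at $x=X_-(y)$ (the expression is symmetric in the two branches $X_\pm$ since the bracketed term is the $X_+$-evaluation of the same object whose first two terms define $N$, so $N(X_-,y)$ is obtained by swapping the roles of $X_+$ and $X_-$, which leaves $N$ fixed). Hence Lemma~\ref{lemma:decoupling_dividekernel} applies — after clearing denominators to make $N$ a polynomial — and $K(x,y)$ divides $N(x,y)$, so $H_n^k = N/K$ is rational. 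Analyticity at $(0,0)$ then follows as in Thm.~\ref{thm:ConstructPHF}/Lemma~\ref{lemma:ConstructPHF} (or directly, since $K(0,0)$ and the lowest-order behaviour are controlled), and that $H_n^k\in\mathcal{H}_n$ with $\triangle H_n^k = H_{n-1}^k$ is a direct verification that (\ref{eq:buildPHF}) satisfies the functional equation (\ref{eq:DecouplingBVP})/(\ref{eq:FE}), using that the boundary term $K(x,0)H_n^k(x,0) = F_{n-1}^k(x)$ now decouples the BVP.

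For the denominator shape (c), I would track where poles can appear in passing from $H_{n-1}^k$ to $H_n^k$. The factor $xyH_{n-1}^k(x,y)$ has poles only along $(1-x)$ and $(1-y)$ by the induction hypothesis. The decoupling function $F_{n-1}^k(x)$, built from (\ref{eq:DecouplingFormula}) by applying powers of $\Theta$ to $M(x,Y_\pm) + M(x,Y_-)$ and symmetrizing over the two $Y$-branches, is a rational function of $x$ alone; one must check its only pole is at $x=1$. This is the delicate point: a priori $\Theta^i$ could introduce poles at the fixed points of the group action or at zeros of $a(x), c(x)$, etc. The argument should be that the group fixes the point $(1,1)$ and that the symmetrization over $Y_\pm$ kills the branch-cut (square-root) singularities, leaving a rational function whose only singularity in the finite plane sits at the image of the original pole locus under the group, which for these zero-drift models is concentrated at $x=1$; combined with Lemma~\ref{lemma:decoupling_dividekernel}, dividing by $K(x,y)$ (whose relevant behaviour near $x=1,y=1$ is a double zero) only \emph{raises} the exponent of $(1-x)$ and $(1-y)$ by a bounded amount each step. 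So by induction one gets (\ref{eq:formPHF}) with $\alpha = \alpha(n,k)$ growing linearly in $n$.

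The main obstacle, as flagged above, is controlling the singularities of the decoupling function $F_{n-1}^k(x)$ defined by (\ref{eq:DecouplingFormula}): showing that applying the group elements $\Theta^i$ and symmetrizing over $Y_\pm$ produces a rational function of $x$ whose \emph{only} pole is at $x=1$ (so that no spurious denominators appear and the common form (\ref{eq:formPHF}) is preserved). The rationality of $\omega$ when $\pi/\theta\in\Z$, and the fact that the group of birational maps fixes $(1,1)$ and acts on the rational function field with well-understood orbits, are the facts I would lean on; making the pole-order bookkeeping precise enough to conclude that a single $\alpha$ works for both variables — rather than a pair of possibly-different exponents — may require a symmetry argument (e.g. that the whole construction can be run symmetrically in $x$ and $y$, or an explicit comparison of the $x$- and $y$-degrees in the examples of Sections~\ref{sec:example_TandemWalk_general} and \ref{sec:example_KingsWalk_general}).
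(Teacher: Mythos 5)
Your overall strategy is the same as the paper's: induction on $n$, rationality and divisibility by $K$ via Lemma~\ref{lemma:decoupling_dividekernel} after checking that the numerator vanishes at $x=X_\pm(y)$, existence of the decoupling function via the orbit-sum criterion of Thm.~\ref{thm:Tutte} and Cor.~\ref{prop:oSum}, and pole bookkeeping to get the shape (\ref{eq:formPHF}). Two points, however, are left unresolved, and the first is a genuine gap that you yourself flag but do not close.

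The delicate point you identify --- that $F_{n-1}^k(x)$, built from (\ref{eq:DecouplingFormula}) by applying the $\Theta^i$ and symmetrizing over $Y_\pm$, has its \emph{only} pole at $x=1$ --- cannot be settled by the facts you list ($\omega$ rational, $(1,1)$ fixed by the group). The missing ingredient is an additional inductive invariant: \emph{$xyH_n^k(x,y)$ has no pole at $x=\infty$ or $y=\infty$}. Without it, a group element $\Theta^i$ (which is birational and can send finite points to infinity and vice versa, e.g.\ $(x,y)\mapsto(1/y,x/y)$ for the tandem walk) could pull a pole at infinity of $M(x,Y_\pm)$ back to a finite point other than $1$, producing spurious denominators in $F_{n-1}^k$. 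The paper proves this invariant at each step from the functional equation (\ref{eq:FE}): if $xyH_n^k$ had a pole as $x\to\infty$, so would $K(x,y)H_n^k(x,y)$, and since $xyH_{n-1}^k$ has none by induction, that pole would have to cancel against $K(x,0)H_n^k(x,0)$, which is independent of $y$ --- impossible for all $y$ simultaneously. Once this is in place, each summand of (\ref{eq:DecouplingFormula}) has poles only where $\Theta^i$ meets the fixed point $(1,1)$, i.e.\ at $x=1$. You need to add this invariant and its proof.

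Second, for $n\geq 2$ the function $xyH_{n-1}^k(x,y)$ is \emph{not} of the form $xy\,\frac{u(x)+v(y)}{K(x,y)}$ covered by Cor.~\ref{prop:oSum}, because its numerator contains the term $xyH_{n-2}^k(x,y)$. Your "more robust" aside points in the right direction but the telescoping argument does not apply to that term. The paper splits $xyH_n^k$ into $\frac{xy}{K(x,y)}\cdot xyH_{n-1}^k(x,y)$, whose orbit sum vanishes because $\frac{xy}{K}$ is group-invariant and $xyH_{n-1}^k$ has orbit sum zero by induction, plus a remainder genuinely of the form $xy\,\frac{A(x)+B(y)}{K(x,y)}$ to which Cor.~\ref{prop:oSum} applies. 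This splitting should be made explicit. (Your final worry about a single exponent $\alpha$ versus a pair $(u,v)$ is a non-issue: one can always take $\alpha=\max(u,v)$ and absorb the difference into the polynomial $p_{n,k}$, consistent with the bound $u,v\leq\alpha$ recorded in Thm.~\ref{thm:Convergence}.)
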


\textbf{Remarks:} \begin{itemize}
    \item In Thm.~\ref{thm:Convergence} we will see that $\alpha\leq k\pi/\theta+2(n-1)$.
    \item Defining decoupling functions and utilizing them in order to compute polyharmonic functions works, as long as the group is finite, for any $\pi/\theta$ (which must then automatically be rational). In particular, one can check that an analogous version of (\ref{eq:DecouplingFormula}) holds. However, in the non-integer case we do not obtain polynomial functions anymore, and in particular we will not have a representation like (\ref{eq:formPHF}); the main reason being that an equivalent of Lemma~\ref{lemma:decoupling_dividekernel} does not hold. Therefore we lose information about the positioning of singularities, which will generally not only be where $x=1$ or $y=1$. The rest one can prove in the same manner as the corresponding points in the proof of Thm.~\ref{thm:buildPHF}. 
\end{itemize}

\begin{proof} We proceed by induction. In each step, we will show that:\begin{itemize}
    \item $H_n^k(x,y)$ is rational,
    \item $H_n^k(x,y)$ has its only poles at $x=1$ or $y=1$,
    \item $xyH_n^k(x,y)$ does not have a pole at $x=\infty$ or $y=\infty,$
    \item $xyH_n^k(x,y)$ has orbit sum $0$ and thus admits a decoupling function $F_n^k(x)$,
    \item $F_n^k(x)$ has its only pole at $x=1$.
\end{itemize} To see that $\triangle H_{n+1}^k(x,y)=H_n^k(x,y)$, one can simply plug (\ref{eq:buildPHF}) into the functional equation (\ref{eq:FE}).\\
So consider first the case $n=1$. $H_1^k(x,y)$ being rational follows immediately from $\pi/\theta\in\mathbb{Z}$, and thus $\omega$ being rational (see \cite[(3.12)]{Conformal}). As by construction the numerator $N_1^k(x,y)$ of $xyH_1^k(x,y)$ as defined in (\ref{eq:buildPHF}) satisfies $N_1^k(X_\pm)=0$, it must according to Lemma~\ref{lemma:decoupling_dividekernel} be a multiple of $K(x,y)$, thus the only poles of $H_1^k(x,y)$ can be those coming from $\omega(x),\omega(X_+)$. Since $\omega(x)$ has its only pole at $x=1$ and $X_+(y)=1$ only if $y=1$, $H_1^k(x,y)$ can only have poles at $x=1,y=1$. Similarly, we check by a direct computation that $xyH_1^k(x,y)$ does not have a pole at $x=\infty,y=\infty$. The existence of a decoupling function $F_1^k(x)$ follows immediately from Prop.~\ref{prop:oSum}. Finally, we can deduce from (\ref{eq:DecouplingFormula}), utilizing that $xyH_1^k(x,y)$ does not have poles at infinity and noting that $(1,1)$ is a fixed point under the group, that $F_n^k(x)$ has its only pole at $x=1$, thus the case $n=1$ is done.\\
Now let $n\geq 2$ and assume the theorem is already shown up to $n-1$. We then formally define as in (\ref{eq:buildPHF}) \begin{align}
    H_n^k(x,y):=\frac{xyH_{n-1}^k(x,y)-F_{n-1}^k(x)-\left[X_+yH_{n-1}^k(X_+,y)-F_{n-1}^k(X_+,y)\right]}{K(x,y)}.
\end{align}
First, we need to argue that $H_n^k(x,y)$ is rational. By assumption, we know that $H_{n-1}^k(x,y)$ is rational. To see that the remaining part of the numerator, that is, $X_+yH_{n-1}^k(X_+,y)-F_{n-1}^k(X_+)$, is rational, we use the defining property (\ref{eq:def:Decoupling}) of the decoupling function $F_n^k$, rewriting \begin{align}
    G(y)&=X_+yH_{n-1}^k(X_+,y)-F_{n-1}^k(X_+)&\Rightarrow\\
    xyH_{n-1}^k(x,y)&\equiv F_{n-1}^k(x)+X_+yH_{n-1}^k(X_+,y)-F_{n-1}^k(X_+)&\mod K(x,y),
\end{align}
and since $xyH_{n-1}^k(x,y)$, $F_{n-1}^k(x)$ as well as $K(x,y)$ are rational, so is $X_+yH_{n-1}^k(X_+,y)-F_{n-1}^k(X_+)$. Next, we consider the poles of $H_n^k(x,y)$. Again, by construction we have that the numerator $N_n^k(X_\pm,y)=0$, and thus by Lemma~\ref{lemma:decoupling_dividekernel} the $K(x,y)$ in the denominator cancels. As we know by assumption that $F_{n-1}^k(x)$ has its only pole at $x=1$, and as $X_+(y)=1$ if and only if $y=1$, there will be no new poles coming from the $F_{n-1}^k$-parts. The same goes for $H_{n-1}^k(x,y)$ and $H_{n-1}^k(X_+,y)$, and therefore the only poles of $H_n^k(x,y)$ can be at $x=1,y=1$. \\
To check that $xyH_n^k(x,y)$ does not have a pole at infinity, we utilize (\ref{eq:FE}):
\begin{align*}
    K(x,y)H_n^k(x,y)=K(x,0)H_n^k(x,y)+K(0,y)H_n^k(0,y)-xyH_{n-1}^k(x,y).
\end{align*}
If $xyH_n^k(x,y)$ had a pole at infinity, then so would $K(x,y)H_n^k(x,y)$. But as by the induction hypothesis, $xyH_{n-1}^k(x,y)$ does not have a pole at infinity; so the pole for, say, $x\to\infty$ of the left-hand side would need to cancel with $K(x,0)H_n^k(x,0)$ on the right-hand side. But the left-hand side depends on $y$ while the $K(x,0)H_n^k(x,0)$ doesn't, so the poles cannot cancel for all values of $y$; a contradiction.\\
To see that a decoupling function of $xyH_n^k(x,y)$ exists, we split $xyH_n^k(x,y)$ in two parts. First, we notice that \begin{align}
    xy\frac{xyH_{n-1}^k(x,y)}{K(x,y)}=\frac{xy}{K(x,y)}xyH_{n-1}^k(x,y),
\end{align}
and since $\frac{xy}{K(x,y)}$ is invariant under the group and we already know that $xyH_{n-1}^k(x,y)$ has a decoupling function (and thus its orbit sum is $0$), we deduce that this part as well has orbit sum $0$, and thus it can be decoupled by Thm.~\ref{thm:Tutte}. For the rest, we notice that \begin{align}
    xy\frac{-F_{n-1}^k(x)-\left[X_+yH_{n-1}^k(X_+,y)-F_{n-1}^k(X_+,y)\right]}{K(x,y)}
\end{align}
has the form $xy\frac{A(x)+B(y)}{K(x,y)}$ (note that $X_+(y)$ does in fact not depend on $x$), and thus its orbit sum is $0$ by Cor.~\ref{prop:oSum}. Therefore, Thm.~\ref{thm:Tutte} gives us a decoupling function $F_n^k(x)$ of $xyH_n^k(x,y)$ via (\ref{eq:DecouplingFormula}). As each summand has its poles at $x=1$ only, and $\Theta$ leaves the point $(1,1)$ invariant, we know that $F_n^k$ has its only pole at $x=1$.\\
It remains to show that the order of the poles at $x,y=1$ is at most $k\cdot \pi/\theta + 2(n-1)$. For $n=1$ this can again be verified directly; afterwards it follows by induction: by a short computation one can see that the order of the pole of $F(x)$ compared to the one at $x=1$ of $xyH(x,y)$ increases at most by $2$, and by a similar argument for the $G(y)$ in (\ref{eq:def:Decoupling}) (see \cite[Thm.~4.11]{Tutte} for an explicit formula) one can show the same for $X_+yH(X_+,y)-F(X_+)=G(y)$. Using (\ref{eq:buildPHF}) finally yields the statement.
\end{proof}
By Lemma~\ref{lemma:criterion_AllPHF}, it therefore follows that the thusly constructed polyharmonic functions form a Schauder basis of the space of all polyharmonic functions.

\subsection{Example: the tandem walk revisited}\label{sec:exampleTandem}
To illustrate the results from Section~\ref{sec:Decoupling}, consider once again the tandem walk, which has the step set $\mathcal{S}=\{\rightarrow, \downarrow, \nwarrow\}$, with weights $\frac{1}{3}$ each. As in Example~\ref{sec:example_TandemWalk_general}, we have
\begin{align}
    K(x,y)=\frac{xy}{3}\left(x^{-1}+y+xy^{-1}\right)-xy,\quad
    H_1^1(x,y)=\frac{81(xy-1)}{4(x-1)^3(y-1)^3}.
\end{align}
Coefficient extraction then led us to recover the original harmonic function from the generating series, giving us $h_1^1(i,j)=(i+1)(j+1)(i+j+2)$. In Section~\ref{sec:Decoupling} we computed a biharmonic function for $H_1^1$, which was however not rational. Using the method presented in this section, however, we will find that there is, in fact, a rational one.\\
First, one can check that the group is finite and of order $6$; we have 
\begin{align}
    (x,y)\stackrel{\Psi}{\mapsto}\left(x,\frac{x}{y}\right)\stackrel{\Phi}{\mapsto}\left(\frac{1}{y},\frac{x}{y}\right)\stackrel{\Psi}{\mapsto}\left(\frac{1}{y},\frac{1}{x}\right)\stackrel{\Phi}{\mapsto}\left(\frac{y}{x},\frac{1}{x}\right)\stackrel{\Psi}{\mapsto}\left(\frac{y}{x},y\right)\stackrel{\Phi}{\mapsto}(x,y).
\end{align}
Now using (\ref{eq:DecouplingFormula}), we obtain the decoupling function $F_1(x)=-\frac{81x^3}{4(1-x)^5}$. Note that this decoupling function is not the same one as is given in \cite[App.~C]{Poly}, where instead (after scaling) $F_1'=\frac{-81x^3}{4(1-x)^6}$ is given. This goes to show that the choice of a decoupling function is, due to the invariance property in (\ref{eq:wInv}), unique only up to functions of $\omega$; in this particular case we have (up to a multiplicative constant) $F_1'(x)-F_1(x)=\omega(x)^2$. The way in which this alternative decoupling function was found is described in Section~\ref{sec:guessing}.\\
We can now utilize this $F_1$ in order to compute a biharmonic function; (\ref{eq:buildPHF}) directly gives us \begin{align}H_2^1=-\frac{243(xy-1)(x+y+xy(x+y-4))}{(x-1)^5(y-1)^5},\end{align}
which after extracting coefficients corresponds to \begin{multline}
h_2^1(i,j)=(i+1)(j+1)(-36 i - 30 i^2 - 6 i^3 - 36 j - 44 ij - 14 i^2 j - 2 i^3 j - 
  30 j^2\\ - 14 ij^2 + i^2 j^2 + i^3 j^2 - 6 j^3 - 2 i j^3 + i^2 j^3 + 
  i^3 j^3)
\end{multline}We can now use (\ref{eq:DecouplingFormula}) again to obtain the next decoupling function $F_2(x)=\frac{81x^2(x+2)}{4(x-1)^7}$, which we can then use to compute \begin{align}H_3^1=\frac{p(x,y)}{(x-1)^7(y-1)^7},\end{align} where $p(x,y)$ is a somewhat unwieldy polynomial of degree $9$. 

\subsection{Example: the king's walk revisited}
Consider now once again the king's walk with the step set $\mathcal{S}=\{\uparrow,\nearrow,\rightarrow,\searrow,\downarrow,\swarrow,\leftarrow,\nwarrow\}$, each with probability $\frac{1}{8}$. We have, as in Section~\ref{sec:example_KingsWalk_general}, \begin{align}
    K(x,y)&=\frac{1+x+y+x^2+y^2+x^2y+xy^2+x^2y^2}{8}-xy,\\ H_1^1(x,y)&=-\frac{1}{16(x-1)^2(y-1)^2}.
\end{align}
After coefficient extraction, we find that $h_1^1(i,j)=(i+1)(j+1)$. While the biharmonic function we obtained in Section~\ref{sec:example_KingsWalk_general} was rational, it did not have a shape which made it very easy to describe its singularities, or to extract coefficients. This will once again be very different applying the decoupling method.\\
The king's walk has a finite group of order $4$, namely \begin{align}
    (x,y)\stackrel{\Psi}{\mapsto}\left(x,\frac{1}{x}\right)\stackrel{\Phi}{\mapsto}\left(\frac{1}{x},\frac{1}{y}\right)\stackrel{\Psi}{\mapsto}\left(\frac{1}{x},y\right)\stackrel{\Phi}{\mapsto}(x,y).
\end{align}
It turns out that in this case, we can pick $0$ as a decoupling function, as the right-hand side of (\ref{eq:DecouplingBVP}) vanishes. Therefore, (\ref{eq:buildPHF}) gives us $\hat{H}_2^1(x,y)=-\frac{128y}{3(x-1)^2(y-1)^4}$, which is essentially the same result as we obtained for the simple walk in Section~\ref{sec:SimpleWalk}. This is not a coincidence; in both cases we have $\pi/\theta=2$ and a finite group; hence we can apply Thm.~\ref{thm:pitheta2:finitegroup_PHF}, and from there on one easily sees that the resulting polyharmonic functions will be the same. 

\subsection{Continuous decoupling}\label{sec:Decoupling_continuous}
The idea of decoupling in the continuous setting, as suggested in~\cite{Conformal, Poly}, is very much the same as in Section~\ref{sec:Decoupling}. The continuous version of the BVP for polyharmonic functions now reads \begin{align}\label{eq:BVP_continuous}
    \sigma_{22}\lap_2(h_n)(c_+y)-\sigma_{22}\lap_2(h_n)(c_-y)=\lap(h_{n-1}(c_+y,y))-\lap(h_{n-1}(c_-y,y)).
\end{align}
Our goal will now be to construct a decoupling function $f_{n-1}(x)$, such that \begin{align}\label{eq:cont_decoupling}
f_{n-1}(c_+y)-f_{n-1}(c_-y)=\lap(h_{n-1})(c_+y,y)-\lap(h_{n-1})(c_-y,y).
\end{align}
One key point to note here is that all expressions appearing in (\ref{eq:BVP_continuous}) are homogeneous\footnote{That is, there is some $m\in\R$ such that they satisfy $T(\lambda x,\lambda y)=\lambda^mT(x,y)$ for all $x,y$; we call this $m$ the degree of $T$ and (written as $\deg T$).}, which follows for $\lap(h_1^k)$ by construction, and can be checked for the others by induction. In particular, this means that the right-hand side of (\ref{eq:cont_decoupling}) is homogeneous as well; and as it depends only on $y$ it must therefore be of the form $\alpha y^m$ for some $\alpha$, and $m=\deg\lap(h_{n+1})$. Consequently, we choose the ansatz $f_n=\beta y^m$, with $m=\deg\lap(h_n)$. The equation we wish to solve thus reads, provided $(c_+)^m\neq (c_-)^m$, \begin{align}
    \beta\left[(c_+y)^m-(c_-y)^m\right]=\alpha y^m\quad\Leftrightarrow\quad \frac{\alpha}{(c_+)^m-(c_-)^m}=\beta.
\end{align}
Remembering that $c_\pm=ce^{\pm i\pi/\theta}$, we see that this is solveable in general only if $m$ is not an integer multiple of $\pi/\theta$, as then we would have $(c_+)^m-(c_-)^m=0$. As it turns out, this constraint does not in fact matter: whenever we would run into this issue, it just so happens that $\alpha$ is already $0$, i.e. we do not need a decoupling function (see Example~\ref{sec:exampleTandemC}). At this stage, no direct proof of this is known, and it would be very interesting to find a way to see this directly. But one can use the convergence properties of discrete polyharmonic functions to show that the decoupling function will be $0$ in all suitable cases to circumvent this problem. Since this is essential in order to continue the procedure but we will use convergence properties which will be introduced later, this  will be stated here and be proven in Section~\ref{sec:decoupling_convergence}. An illustration of this is given in Example~\ref{sec:exampleTandemC}.

\begin{lemma}\label{lemma:contDecoupling}
In the setting of Thm.~\ref{thm:PHF_C} below, if $(c_+)^m-(c_-)^m=0$, then the right-hand side of (\ref{eq:cont_decoupling}) vanishes. In particular, we can always find a decoupling function of the form $f_n^k(x)=\alpha x^m$, with $m=\deg\lap{h_n^k}(c_+x,x)$ (where $\alpha=0$ if $(c_+)^m=(c_-)^m$). 
\end{lemma}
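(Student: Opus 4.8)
The plan is to obtain the continuous statement as a scaling-limit consequence of the corresponding discrete fact, namely that the discrete construction in Thm.~\ref{thm:buildPHF} produces, for every $n$, a genuine bivariate power series — which forces the discrete analogue of the right-hand side of (\ref{eq:cont_decoupling}) to be a polynomial (rational) quantity with no obstruction, and in particular, when the homogeneity degree $m$ lands on a multiple of $\pi/\theta$, to be identically zero in the limit. Concretely: by Thm.~\ref{thm:buildPHF} we have rational discrete polyharmonic functions $H_n^k(x,y)$ with $\triangle H_{n+1}^k = H_n^k$, and for each of them the quantity $X_+ y H_{n-1}^k(X_+,y) - F_{n-1}^k(X_+)$ is a rational function of $y$ alone. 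The key observation is that this function is precisely the discrete counterpart of $\lap(h_{n-1})(c_+y,y)$ modulo the decoupling term, so that the discrete analogue of the right-hand side of (\ref{eq:cont_decoupling}) is
\begin{align}
    R_n^k(y) := \left[X_+ y H_{n-1}^k(X_+,y) - F_{n-1}^k(X_+)\right] - \left[X_- y H_{n-1}^k(X_-,y) - F_{n-1}^k(X_-)\right],
\end{align}
and this is exactly the numerator defect that is killed by Lemma~\ref{lemma:decoupling_dividekernel} — i.e. $R_n^k(y)$ vanishes identically because a rational decoupling function $F_{n-1}^k$ with the required symmetry was constructed in Thm.~\ref{thm:buildPHF}. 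So in the discrete finite-group case the obstruction is always absent, regardless of degrees.

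Next I would transfer this to the continuous setting via the convergence results of Section~\ref{sec:decoupling_convergence} (which this lemma is permitted to cite, being stated in the setting of Thm.~\ref{thm:PHF_C}). Using Lemma~\ref{lemma:convergence_toolkit1}, $K(e^{-\mu x}, e^{-\mu y})/\mu^2 \to \gamma(x,y)$ and $X_\pm(e^{-\mu y}) \to 1 + c_\pm y + \mathcal{O}(y^2)$, one sees that the appropriately rescaled limit of $X_+ y H_{n-1}^k(X_+,y)$ is $\lap(h_{n-1}^k)(c_+ y, y)$, and the rescaled limit of $F_{n-1}^k(X_+)$ is the continuous decoupling term $f_{n-1}^k(c_+ y)$. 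Hence the rescaled limit of $R_n^k(y)$ is exactly the right-hand side of (\ref{eq:cont_decoupling}). Since $R_n^k \equiv 0$ in the discrete case, its rescaled limit is $0$, so the right-hand side of (\ref{eq:cont_decoupling}) vanishes whenever the discrete construction forces it to — and a homogeneity/degree count (the right-hand side is homogeneous of degree $m = \deg \lap(h_n^k)$ in $y$, hence of the form $\alpha y^m$) shows that the only way $f_n^k = \beta x^m$ can fail to solve $\beta[(c_+)^m - (c_-)^m] = \alpha$ is when $(c_+)^m = (c_-)^m$, i.e. $m \in \frac{\pi}{\theta}\Z$; and the argument above shows that in precisely that case $\alpha = 0$, so $f_n^k = 0$ works (indeed any $\beta$ works, and we pick $\beta = 0$).

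The main obstacle I anticipate is bookkeeping the scaling exponents consistently: one must check that the power of $\mu$ used to rescale $H_{n-1}^k$, the power used for $F_{n-1}^k$, and the power appearing in $R_n^k$ all match (they should, since $F_{n-1}^k$ is built from $xyH_{n-1}^k$ by the orbit formula (\ref{eq:DecouplingFormula}), which does not change the homogeneity class), and that the limits may be taken term-by-term — this is where one leans on the rationality/algebraicity of the $H_n^k$ and the explicit control of their poles (only at $x=1$, $y=1$) from Thm.~\ref{thm:buildPHF}, together with Lemma~\ref{lemma:boundary_convergence_2}-type exchange-of-limits arguments. Once the exponents are pinned down, the vanishing is immediate from the discrete side; the genuinely new content is only the identification of limits, not any fresh estimate. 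A remark should be added that this gives no \emph{direct} (purely continuous) explanation of why $\alpha = 0$ in the resonant case, matching the comment already made before the lemma statement.
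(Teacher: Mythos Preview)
Your overall strategy---infer the continuous statement from the discrete one via scaling limits---is exactly the paper's route (it proves this lemma jointly with Thm.~\ref{thm:Convergence}, invoking Lemma~\ref{lemma:limit_Decoupling}). But there is a genuine error in your identification of the limit. You assert that the rescaled limit of $R_n^k(y)$ is ``exactly the right-hand side of (\ref{eq:cont_decoupling})''. That cannot be: the right-hand side of (\ref{eq:cont_decoupling}) is generically nonzero (see the tandem computations in Section~\ref{sec:exampleTandemC}), whereas $R_n^k\equiv 0$ always. Your own preceding sentence already gives the correct limit: each bracket $\bigl[X_\pm y H_{n-1}^k(X_\pm,y)-F_{n-1}^k(X_\pm)\bigr]$ rescales to $\bigl[\lap(h_{n-1}^k)(c_\pm y,y)-f(c_\pm y)\bigr]$, where $f$ is the scaling limit of $F_{n-1}^k$. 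Hence the limit of $R_n^k$ is the \emph{difference} RHS\,$-$\,LHS of (\ref{eq:cont_decoupling}) evaluated at this particular $f$, not the RHS alone. From $R_n^k=0$ you therefore conclude that $f$ \emph{is} a continuous decoupling function (this is precisely Lemma~\ref{lemma:limit_Decoupling}), not that the RHS vanishes.

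Once this is fixed, the finish is as in the paper: since $F_{n-1}^k$ is rational with its only pole at $x=1$ (Thm.~\ref{thm:buildPHF}), its scaling limit $f$ is necessarily of the form $\beta x^m$; substituting into (\ref{eq:cont_decoupling}) gives $\beta\bigl[(c_+)^m-(c_-)^m\bigr]=\alpha$, and in the resonant case $(c_+)^m=(c_-)^m$ the left side is zero, forcing $\alpha=0$. So the correct logical order is ``a monomial decoupling function exists (as a scaling limit), hence in the resonant case the RHS must vanish'', not ``the RHS is the limit of something identically zero''. Two minor cleanups: your $R_n^k$ involves $H_{n-1}^k$ and $F_{n-1}^k$, so the relevant degree there is $\deg\lap(h_{n-1}^k)$, not $\deg\lap(h_n^k)$; and the reference to Lemma~\ref{lemma:decoupling_dividekernel} for $R_n^k\equiv 0$ is a red herring---that vanishing is just the defining property (\ref{eq:def:Decoupling}) of a decoupling function restricted to the kernel curve.
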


Utilizing the above lemma, it is now easy to prove the continuous analogue of Thm.~\ref{thm:buildPHF}.
\begin{theorem}\label{thm:PHF_C}
Suppose we have a non-singular model with zero drift, small steps and such that $\pi/\theta\in\Z$. Let $\lap(h_1^k)(x,y)$ be defined by (\ref{eq:defHF_continuous}). We can then define inductively \begin{align}\label{eq:buildPHF_C}
    \lap(h_n^k)(x,y)=\frac{\lap(h_{n-1})(x,y)-f_{n-1}(x)-\left[\lap(h_{n-1})(c_+y,y)-f_{n-1}(c_+y)\right]}{\gamma(x,y)},
\end{align}
where $f_n(x)$ is a decoupling function as in (\ref{eq:cont_decoupling}). Then, $\lap(h_n^k)(x,y)$ is the Laplace transform of an $n$-harmonic function, such that $\mathcal{L}h_n^k=h_{n-1}^k$. For each $n,k$ we can write \begin{align}
    \lap(h_n^k)(x,y)=\frac{q_{n,k}(x,y)}{x^\alpha y^\alpha},
\end{align}
for $\alpha\in\N$ and  $q_{n,k}(x,y)$ a homogeneous polynomial. 
\end{theorem}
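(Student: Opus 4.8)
The plan is to mirror, almost line by line, the inductive proof of Theorem~\ref{thm:buildPHF}, replacing the discrete objects with their continuous analogues: $K \leftrightarrow \gamma$, $X_\pm \leftrightarrow c_\pm y$, $\omega \leftrightarrow \widehat\omega$, generating functions $\leftrightarrow$ Laplace transforms, and Lemma~\ref{lemma:decoupling_dividekernel} $\leftrightarrow$ the elementary fact that if a polynomial $N(x,y)$ vanishes at $x=c_+y$ and $x=c_-y$ then it is divisible by $\gamma(x,y)$ (which holds because $\gamma$, up to a constant, factors as $\tfrac{\sigma_{11}}{2}(x-c_+y)(x-c_-y)$, and $c_+ \ne c_-$ since $\pi/\theta \in \Z$ forces $\theta < \pi$). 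First I would verify the base case: for $\pi/\theta \in \Z$ the map $\widehat\omega(x) = 1/x^{\pi/\theta}$ is rational, hence $\lap(h_1^k)(x,y)$ as in (\ref{eq:defHF_continuous}) is a quotient of homogeneous polynomials whose numerator vanishes on $x = c_\pm y$; dividing by $\gamma$ leaves only the denominators coming from $\widehat\omega(x)^k$ and $\widehat\omega(c_+y)^k$, i.e. powers of $x$ and $y$, so $\lap(h_1^k)$ has the claimed shape $q_{1,k}(x,y)/(x^\alpha y^\alpha)$ with $q_{1,k}$ homogeneous. One also checks directly that $\lap(h_1^k)$ satisfies (\ref{eq:FE_continuous}) with $\triangle h_1^k = 0$, and that $h_1^k$ is a genuine (continuous) harmonic function — here one must additionally argue the inverse Laplace transform exists, which is exactly the point flagged at the end of Section~\ref{sec:continuousPHF} as being different in the decoupling construction.

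For the inductive step, assume $\lap(h_{n-1}^k)$ is a quotient of homogeneous polynomials of the stated form, is the Laplace transform of an $(n-1)$-harmonic function, and is homogeneous of some degree $m_{n-1}$. Invoking Lemma~\ref{lemma:contDecoupling} I obtain a decoupling function $f_{n-1}(x) = \beta x^{m}$ (with $m = m_{n-1}$, or $f_{n-1} = 0$ in the degenerate exponent case) satisfying (\ref{eq:cont_decoupling}); in particular $f_{n-1}(c_+y) - f_{n-1}(c_-y) = \lap(h_{n-1})(c_+y,y) - \lap(h_{n-1})(c_-y,y)$, which is the continuous version of the decoupling identity (\ref{eq:def:Decoupling2}). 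Then I define $\lap(h_n^k)$ by (\ref{eq:buildPHF_C}). The numerator, call it $M_n^k(x,y) := \lap(h_{n-1})(x,y) - f_{n-1}(x) - \lap(h_{n-1})(c_+y,y) + f_{n-1}(c_+y)$, vanishes at $x = c_+y$ by inspection, and vanishes at $x = c_-y$ precisely because of the decoupling identity (this is where $f_{n-1}$ earns its keep); hence after clearing denominators the polynomial numerator is divisible by $\gamma(x,y)$, so $\lap(h_n^k)$ is again a quotient of homogeneous polynomials. Tracking degrees and poles: $f_{n-1}(x)$ and $\lap(h_{n-1})(c_+y,y) - f_{n-1}(c_+y) =: g_{n-1}(y)$ are pure powers of $x$ resp.\ $y$ (homogeneity plus single-variable dependence), so the only denominators are powers of $x$ and $y$; dividing by $\gamma$ (degree $2$) shifts the homogeneity degree down by $2$ and, as in the discrete proof, raises the pole orders at $x=0$ and $y=0$ by at most $2$ each step, giving $\alpha \le k\pi/\theta + 2(n-1)$ — this is the bound referenced in the remark before Thm.~\ref{thm:Convergence}. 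Plugging (\ref{eq:buildPHF_C}) into (\ref{eq:FE_continuous}) and using that $\lap(h_{n-1})$ satisfies it confirms $\lap(\triangle h_n^k) = \lap(h_{n-1}^k)$, i.e.\ $\triangle h_n^k = h_{n-1}^k$, so $h_n^k$ is $n$-harmonic.

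The main obstacle — and the genuinely new content over a copy of Thm.~\ref{thm:buildPHF} — is showing that $\lap(h_n^k)$ really is the Laplace transform of a function (or at least a distribution) on the quarter plane, rather than a merely formal solution of (\ref{eq:FE_continuous}); this is what the remark in Section~\ref{sec:continuousPHF} warns fails for the Section~\ref{sec:generalSolution} construction. The key structural gain here is that, because $\gamma$ divides the numerator exactly, the resulting $\lap(h_n^k)$ has poles only along $x=0$ and $y=0$ (not along the kernel curve $\gamma = 0$), so on a half-plane region $\{\Re x \ge u, \Re y \ge v\}$ with $u,v > 0$ it is bounded and decays, and by the two-dimensional Laplace inversion / Doetsch-type criterion (cf.\ \cite{2dimLaplace}) such a function is genuinely a Laplace transform. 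I would make this precise by exhibiting the inverse transform explicitly via Almansi-type decomposition, or by citing the standard half-plane characterization, and then checking the Dirichlet boundary conditions $h_n^k|_{\partial\mathcal W} = 0$ directly from the structure of $q_{n,k}$. A secondary technical point is confirming that the homogeneity degree is tracked consistently so that Lemma~\ref{lemma:contDecoupling} applies at every stage; this is routine given that $\gamma$, $\widehat\omega$, and the initial $\lap(h_1^k)$ are all homogeneous and (\ref{eq:buildPHF_C}) manifestly preserves homogeneity.
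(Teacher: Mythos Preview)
Your proposal is correct and follows essentially the same inductive scheme as the paper: base case checked directly, then at each step invoke Lemma~\ref{lemma:contDecoupling} to obtain $f_{n-1}$, observe that the numerator of (\ref{eq:buildPHF_C}) vanishes at both $x=c_+y$ and $x=c_-y$ so that $\gamma(x,y)=\tfrac{\sigma_{11}}{2}(x-c_+y)(x-c_-y)$ divides it, track homogeneity, and verify (\ref{eq:FE_continuous}).

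The one place where you take a more elaborate route than necessary is the existence of the inverse Laplace transform. You propose half-plane decay criteria from \cite{2dimLaplace} or an Almansi-type decomposition; the paper instead observes directly that once $\lap(h_n^k)=q_{n,k}(x,y)/(x^\alpha y^\alpha)$ with $q_{n,k}$ a polynomial, it is a finite linear combination of monomials $x^{-a}y^{-b}$, each of which has an explicit inverse Laplace transform (a constant times $s^{a-1}t^{b-1}$). This is simpler and sidesteps having to check growth or boundary conditions separately. Your approach would also work, but you are doing more than is needed.
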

\textbf{Remark:} We will see in Thm.~\ref{thm:Convergence} that $\alpha\leq k\pi/\theta+2(n-1)$. 
\begin{proof}
For $n=1$, the statement can be checked directly. Now suppose the statement holds for $n$, thus we know that $\mathcal{L}(h_n^k)(x,y)=\frac{q_{n,k}(x,y)}{x^\alpha y^\alpha}$. By Lemma~\ref{lemma:contDecoupling} (which will be proven in Section~\ref{sec:decoupling_convergence}), we know that we can find a decoupling function, which must either be $0$ or have the same degree as $\mathcal{L}(h_n^k)(c_+y,y)$,  and we can therefore formally define $\lap(h_{n+1}^k)(x,y)$ via (\ref{eq:buildPHF_C}). One can check that each summand is homogeneous of the same degree; hence so is their sum. By construction, the numerator of (\ref{eq:buildPHF_C}) is $0$ for $x= c_\pm y$; it must therefore contain a factor $\gamma(x,y)=\sigma_{11}(x-c_+y)(x-c_-y)$, so the denominator cancels. The fact that $\lap(h_{n+1}^k)(x,y)$ is the Laplace transform of a continuous polyharmonic function such that $\triangle h_{n+1}^k(s,t)=h_n^k(s,t)$ follows from checking that the functional equation (\ref{eq:FE_continuous}) is satisfied, and noticing that we can perform an inverse transform on monomials of the form $x^uy^v$ for $u,v\in\R$. 
\end{proof}

\textbf{Remark:} while the construction of discrete polyharmonic functions via decoupling functions is only possible if the group is finite, there are no such restrictions in the continuous setting. 

\subsection{Example: the scaling limit of the tandem walk revisited}\label{sec:exampleTandemC}
For the scaling limit of the tandem walk, we have \begin{align}
    \gamma(x,y)=\frac{1}{3}\left(x^2-xy+y^2\right),\quad
    c_\pm=\frac{1\pm i\sqrt{3}}{2},\quad
    \widehat{\omega}(x)=\frac{1}{x^3}.
\end{align}
As before, we have $\lap(h_1^1)(x,y)=\frac{3(x+y)}{x^3y^3}$, and thus (\ref{eq:cont_decoupling}) takes the form 
\begin{align}
\label{eq:decoupling_continuous_tandem_1}f_1^1(c_+y)-f_1^1(c_-y)=\lap(h_1^1)(c_+y,y)-\lap(h_1^1)(c_+y,y)=\frac{3i\sqrt{3}}{y^5}.\end{align}
By a quick computation, one obtains $f_1(x)=\frac{-3}{x^5}$ and a biharmonic function
\begin{align}
    \lap(h_2^1)(x,y)&=\frac{\lap(h_1^1)(x,y)-f_1(x)-\left[\lap(h_1^1)(c_+y,y)-f_1^1(c_+y)\right]}{\gamma(x,y)}\\
    &=\frac{9(x+y)(x^2+y^2)}{x^5y^5}.
\end{align}
Performing the inverse Laplace transform, this gives us \begin{align}
    h_2^1(s,t)=-\frac{81}{8}st(s+t)(s^2+st+t^2).
\end{align}
For computing a triharmonic function, our decoupling function must now satisfy \begin{align}\label{eq:decoupling_continuous_tandem_2}
    f_2^1(c_+y)-f_2^1(c_-y)=\lap(h_2^1)(c_+y,y)-\lap(h_2^1(c_-y,y)=\frac{243i\sqrt{3}}{4y^7},
\end{align} 
which leads to $f_2^1(x)=-\frac{243}{x^7}$ and \begin{align}
    \lap(h_3^1)(x,y)&=\frac{729(x+y)(x^2-xy+y^2)(x^2+xy+y^2)}{4x^7y^7},
\end{align}
and \begin{align}
    h_3^1(s,t)=-\frac{81}{320}st(s+t)(s^2+st+t^2)^2.
\end{align}
When trying to compute a decoupling function $f_3^1(x)$ as in (\ref{eq:decoupling_continuous_tandem_1}) and (\ref{eq:decoupling_continuous_tandem_2}), seeing that the degree of the denominator will always increase by $2$, this is where one might expect things to go wrong, as $(c_+)^9=(c_-)^9$ and thus an ansatz as above might not work. However, doing the computation one finds that \begin{align}\label{eq:decoupling_continuous_tandem_3}
    f_3(c_+y)-f_3(c_-y)=\lap(h_3^1)(c_+y,y)-\lap(h_3^1)(c_-y,y)=0,
\end{align}
thus we can pick $f_3(x)=0$ and directly obtain a $4$-harmonic function \begin{align}
    \lap(h_4^1)(x,y)=\frac{2187 (x^3 + 2 x^2 y + 2 x y^2 + y^3)}{4 x^7 y^7},
\end{align}
which leads to \begin{align}
    h_4^1(s,t)=\frac{81}{640} s^3 t^3 (s + t)^3.
\end{align}
The fact that the right-hand side of (\ref{eq:decoupling_continuous_tandem_3}) turns out to be $0$ is a consequence of the convergence of discrete to continuous polyharmonic and decoupling functions, and will be shown in the next section in Thm.~\ref{thm:Convergence}.

\subsection{The scaling limit}\label{sec:decoupling_convergence}

Using Lemma~\ref{lemma:convergence_toolkit1}, the strategy to show a general convergence of the polyharmonic functions obtained by decoupling is quite simple: we use the fact that the recursive definitions (\ref{eq:buildPHF}) and (\ref{eq:buildPHF_C}) have the same structure, and take the limit of each term separately. All that remains to consider are the decoupling functions. However, using once again Lemma~\ref{lemma:convergence_toolkit1}, this turns out to be rather straightforward, too.

\begin{lemma}\label{lemma:limit_Decoupling}
Suppose we have are given discrete and continuous polyharmonic function $H(x,y)$ and $\lap(h)(x,y)$ respectively, and a constant $\alpha$ such that \begin{align}
    \lim_{\mu\to 0}\mu^\alpha H\left(e^{-\mu x},e^{-\mu y}\right)=\lap(h)(x,y).
\end{align}
Then, if we can construct a decoupling function $F(x)$ of $xyH(x,y)$ via (\ref{eq:DecouplingFormula}), the limit\begin{align}
    f(x):=\lim_{\mu\to 0} \mu^\alpha F\left(e^{-\mu x}\right)
\end{align}
exists and is a decoupling function of $\lap(h)(x,y)$. 
\end{lemma}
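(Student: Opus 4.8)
The plan is to pass the scaling limit through the explicit formula (\ref{eq:DecouplingFormula}) for the decoupling function, term by term, using Lemma~\ref{lemma:convergence_toolkit1} to control each ingredient. Write $F(x)=-\frac{1}{n}\sum_{i=1}^{n-1}\Theta^i\left[M(x,Y_+)+M(x,Y_-)\right]$ with $M(x,y)=xyH(x,y)$. The group element $\Theta=\Phi\circ\Psi$ is a fixed birational map, independent of $\mu$, and it fixes the point $(1,1)$; so substituting $(x,y)=(e^{-\mu x},e^{-\mu y})$ and applying $\Theta^i$ produces, in the limit $\mu\to 0$, a corresponding linear transformation on the tangent space at $(1,1)$, i.e.\ a map acting on the pair $(x,y)$ of continuous variables. (Concretely, one checks that $\Theta$ in the coordinates $x\mapsto e^{-\mu x}$, $y\mapsto e^{-\mu y}$ converges as $\mu\to0$ to the continuous group element on $(x,y)$, exactly the homogeneous map used implicitly in Section~\ref{sec:Decoupling_continuous}.) First I would record this as the main computational input: for each fixed $i$, and for the roots $Y_\pm$, the substitutions $X_\pm(e^{-\mu y})\to 1+c_\pm y+O(y^2)$ of Lemma~\ref{lemma:convergence_toolkit1} together with $K(e^{-\mu x},e^{-\mu y})/\mu^2\to\gamma(x,y)$ show that $\mu^\alpha M\left(\Theta^i(e^{-\mu x},e^{-\mu y})\right)$ converges to the corresponding continuous expression built from $\lap(h)$ and $\gamma$.

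Next I would assemble the pieces: since the sum in (\ref{eq:DecouplingFormula}) is finite (it has $n-1$ terms, $n$ being the half-order of the finite group) and each summand converges after multiplication by $\mu^\alpha$, the whole sum converges, giving
\begin{align}
f(x)=\lim_{\mu\to 0}\mu^\alpha F\left(e^{-\mu x}\right)=-\frac{1}{n}\sum_{i=1}^{n-1}\theta^i\left[\lap(h)_\sharp(x)\right],
\end{align}
where $\theta$ denotes the limiting continuous group element and $\lap(h)_\sharp$ the limit of $\mu^\alpha M(x,Y_\pm)$; in particular the limit exists and is a well-defined rational (indeed, by homogeneity, monomial-type) function of $x$ alone, because the $Y_\pm$-dependence has been summed out just as in the discrete case. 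Then I would verify that $f$ is a decoupling function of $\lap(h)$: the defining relation (\ref{eq:def:Decoupling}), $F(x)+G(y)\equiv xyH(x,y)\bmod K(x,y)$, holds identically in $x,y$; substituting $(e^{-\mu x},e^{-\mu y})$, multiplying by $\mu^\alpha$, and letting $\mu\to 0$ (using that $K\to\gamma$ controls the modular equivalence, so a multiple of $K$ scales to a multiple of $\gamma$) yields $f(x)+g(y)\equiv \lap(h)(x,y)\bmod \gamma(x,y)$ for $g(y):=\lim\mu^\alpha G(e^{-\mu y})$, which is precisely the statement that $f$ decouples $\lap(h)$ — equivalently, that $f(c_+y)-f(c_-y)=\lap(h)(c_+y,y)-\lap(h)(c_-y,y)$, recovering (\ref{eq:cont_decoupling}).

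The main obstacle I anticipate is the first step: making rigorous that applying the discrete group element $\Theta^i$ commutes appropriately with the scaling limit, i.e.\ that $\Theta^i\circ(e^{-\mu\,\cdot},e^{-\mu\,\cdot})$ genuinely converges (after the $\mu^\alpha$ normalization of the composed rational function) to $\theta^i$ acting on the continuous variables. This requires knowing that $\Theta$ is analytic at $(1,1)$ with $(1,1)$ a fixed point, computing its linearization there, and checking that the rational expression $M$ composed with $\Theta^i$ has no worse singular behaviour than $M$ itself near $(1,1)$ — which follows from the fact, already established in the proof of Thm.~\ref{thm:buildPHF}, that the only poles of $M=xyH$ lie on $x=1$ or $y=1$ and that these are permuted among themselves by the group. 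Once this commutation is in hand, everything else is a finite bookkeeping of limits that have all been prepared by Lemma~\ref{lemma:convergence_toolkit1}. A secondary, milder point to handle carefully is the degree matching: one must confirm that the normalizing exponent $\alpha$ is the same for $F$ as for $M$ (so that no term degenerates to $0$ or blows up), which is exactly the content of the degree bound $\alpha\le k\pi/\theta+2(n-1)$ referenced in Thm.~\ref{thm:buildPHF} and Thm.~\ref{thm:Convergence}, together with homogeneity of the limiting objects.
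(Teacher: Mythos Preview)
Your proposal is correct and follows essentially the same approach as the paper: both argue that the limit exists by controlling the pole order of each summand in the finite sum (\ref{eq:DecouplingFormula}) via the fact that $\Theta$ fixes $(1,1)$, and then obtain the decoupling property of $f$ by passing the discrete decoupling relation to the limit. The paper is slightly more economical in two places: it does not attempt to identify $f$ as an explicit orbit sum over a limiting continuous group element $\theta$ (this is not needed for the lemma), and it handles the nondegeneracy of $\Theta$ at $(1,1)$ by invoking the parametrization of the kernel curve (where $\Theta$ acts as $s\mapsto s/q$, so its derivative never vanishes) rather than appealing to the pole structure of Thm.~\ref{thm:buildPHF}; it then simply takes the limit of (\ref{eq:def:Decoupling2}) using Lemma~\ref{lemma:convergence_toolkit1}. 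Your anticipated ``main obstacle'' is exactly the point the paper addresses with this parametrization argument.
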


\textbf{Remark:} In the context of the construction in Thm.~\ref{thm:buildPHF}, we know that $F_n^k(x)$ is rational with its only pole at $x=1$. From this, we can conclude immediately that $f(x)$ will have the form $f(x)=\frac{\beta}{x^\alpha}$, where $\beta$ may or may not be $0$. This, as we will see, is essentially the idea of the proof of Lemma~\ref{lemma:contDecoupling}.

\begin{proof}
To see that the limit exists, we note that $\alpha$ must be the order of the pole at $x=y=1$ of $H(x,y)$, and thus also the order of the pole of $xY_\pm H(x,Y_\pm)$. Noticing that, due to (\ref{eq:DecouplingFormula}), $F(x)$ consists of such summands with powers of $\Theta$ applied to them, provided that $\Theta'(x,Y_\pm)\neq 0$, we know that the maximum possible order of the pole of $F(x)$ at $x=1$ is $\alpha$. The condition about the derivative, however, is guaranteed by the parametrization of the kernel curve we will use in Section~\ref{sec:parametrization}, which tells us that we have $\Theta(x(s))=s/q$, and therefore the derivative wrt $x$ can never be $0$. Thus, the limit exists, and the statement follows by taking the limit of (\ref{eq:def:Decoupling2}). 
\end{proof}

We can now formulate and prove the following theorem, which shows convergence between the $H_n^k$ and the $\lap(h_n^k)$ defined in Sections~\ref{sec:Decoupling} and~\ref{sec:Decoupling_continuous} respectively. In doing so, we will also prove Lemma~\ref{lemma:contDecoupling}. Since we will be using Thm.~\ref{thm:PHF_C} to do so, which in turn utilizes the former, it is worth taking a moment to make sure that in each induction step in the proof of Thm.~\ref{thm:Convergence} for some fixed $n+1$, we use the statement of Thm.~\ref{thm:PHF_C} for $n$, and then proceed to prove Lemma~\ref{lemma:contDecoupling} for $n+1$. We therefore do not enter any circular reasoning.  

\begin{theorem}\label{thm:Convergence}
Let $\pi/\theta\in\mathbb{Z}$ and $H_n^k$, $\lap(h_n^k)$ be defined by (\ref{eq:buildPHF}),~(\ref{eq:buildPHF_C}) respectively. Then \begin{align}\label{eq:convergence_thm_1}
    \lim_{\mu\to 0}\mu^{k\pi/\theta+2n}H_n^k\left(e^{-\mu x},e^{-\mu y}\right)=\alpha_{n,k}\lap\left(h_n^k\right)(x,y)
\end{align}
for some constants $\alpha_{n,k}\neq 0$.\\
Furthermore, we can write \begin{align}\label{eq:convergence_thm_2}
    H_n^k(x,y)=\frac{p_n^k(x,y)}{(1-x)^u(1-y)^v},\quad \lap(h_n^k)(x,y)=\frac{q_n^k(x,y)}{x^\alpha y^\alpha},
\end{align}
where $u,v,\alpha\in\Z$ with $u,v\leq\alpha=k\pi/\theta+2(n-1)$, $p_n^k(x,y)$ a polynomial and $q_n^k(x,y)$ a homogeneous polynomial of degree $\pi/\theta+2(n-1)$. 
\end{theorem}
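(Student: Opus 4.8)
The plan is to prove the three assertions of Theorem~\ref{thm:Convergence} simultaneously by induction on $n$, using the parallel recursive structure of (\ref{eq:buildPHF}) and (\ref{eq:buildPHF_C}). Concretely, the induction hypothesis at level $n$ bundles together: (a) the scaling-limit identity (\ref{eq:convergence_thm_1}) with a nonzero constant $\alpha_{n,k}$; (b) the rational/homogeneous shapes in (\ref{eq:convergence_thm_2}) with the stated degree bounds $u,v\le\alpha=k\pi/\theta+2(n-1)$ and $\deg q_n^k=\pi/\theta+2(n-1)$; and, as a rider needed by Thm.~\ref{thm:PHF_C}, the statement of Lemma~\ref{lemma:contDecoupling} at level $n$. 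The base case $n=1$ is exactly Thm.~\ref{thm:convergence_HF} together with the shape statements in Thm.~\ref{thm:buildPHF} and Thm.~\ref{thm:PHF_C} (using $\omega(x)=\tfrac{\alpha+o(1)}{(1-x)^{\pi/\theta}}$ near $x=1$, that $[z^k]P_k(z)=1$, and Lemma~\ref{lemma:convergence_toolkit1}); the pole-order bound $u,v\le k\pi/\theta$ follows from the construction of $H_1^k$ in Thm.~\ref{thm:buildPHF} and $\deg q_1^k=\pi/\theta$ by direct inspection of (\ref{eq:defHF_continuous}).

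For the induction step, assume everything at level $n$. First apply Lemma~\ref{lemma:limit_Decoupling} to $H_n^k$ and $\lap(h_n^k)$ with $\alpha=k\pi/\theta+2n$: it yields that $f(x):=\lim_{\mu\to0}\mu^{\alpha}F_n^k(e^{-\mu x})$ exists, is a decoupling function of $\lap(h_n^k)$, and — since $F_n^k$ is rational with its only pole at $x=1$ of order at most $k\pi/\theta+2(n-1)$ — has the form $f(x)=\beta x^{-m}$ with $m=k\pi/\theta+2(n-1)=\deg\lap(h_n^k)(c_+x,x)$ (and $\beta=0$ exactly when this degree is an integer multiple of $\pi/\theta$, because then $(c_+)^m=(c_-)^m$ forces the decoupling identity's right side to vanish). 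This is precisely Lemma~\ref{lemma:contDecoupling} at level $n$, which then feeds into Thm.~\ref{thm:PHF_C} at level $n$ to produce $\lap(h_{n+1}^k)$; no circularity arises because, as the excerpt notes, we use Thm.~\ref{thm:PHF_C} only at level $n$ before proving Lemma~\ref{lemma:contDecoupling} at level $n+1$. Now take the limit $\mu\to0$ termwise in (\ref{eq:buildPHF}): by Lemma~\ref{lemma:convergence_toolkit1}, $\mu^{-2}K(e^{-\mu x},e^{-\mu y})\to\gamma(x,y)$ and $X_+(e^{-\mu y})\to 1+c_+y+O(y^2)$, so each of the four terms $xyH_n^k$, $F_n^k(x)$, $X_+yH_n^k(X_+,y)$, $F_n^k(X_+)$, rescaled by $\mu^{k\pi/\theta+2n}$, converges to the corresponding term in (\ref{eq:buildPHF_C}) times $\alpha_{n,k}$; dividing by the kernel limit $\gamma$ gives (\ref{eq:convergence_thm_1}) at level $n+1$ with $\alpha_{n+1,k}=\alpha_{n,k}$ (nonzero). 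The shape statements at level $n+1$ follow from the pole-order bookkeeping already carried out at the end of the proof of Thm.~\ref{thm:buildPHF}: taking a decoupling function raises the relevant pole order by at most $2$, the same holds for the $G(y)$-part via the explicit formula of \cite[Thm.~4.11]{Tutte}, and dividing by $K(x,y)$ (which cancels cleanly by Lemma~\ref{lemma:decoupling_dividekernel}) or by $\gamma(x,y)$ in the continuous case preserves the polynomial/homogeneous structure; this yields $u,v\le (k\pi/\theta+2(n-1))+2 = k\pi/\theta+2n$ and $\deg q_{n+1}^k=(\pi/\theta+2(n-1))+2=\pi/\theta+2n$, as required.

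The main obstacle I anticipate is justifying that the limit may genuinely be taken termwise and that the limit commutes with dividing by the kernel — i.e.\ controlling the convergence uniformly enough near the singularity at $(1,1)$, where all the relevant poles concentrate. Here one must use that the $H_n^k$ are \emph{rational} with poles only at $x=1$ or $y=1$ (Thm.~\ref{thm:buildPHF}), so that after the substitution $x\mapsto e^{-\mu x}$, $y\mapsto e^{-\mu y}$ the singularities are organized into the single scaling regime governed by Lemma~\ref{lemma:convergence_toolkit1}; the precise matching of pole orders ($u,v\le\alpha$ with the homogeneous limit having pole order exactly $\alpha$ in each variable) is what guarantees the rescaling exponent $k\pi/\theta+2n$ is the right one and that the limit is finite and nonzero rather than $0$ or $\infty$. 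A secondary subtlety is the degenerate case $\beta=0$ in the decoupling step: one must check that the argument does not break when $f_n^k\equiv 0$, which it does not, since then (\ref{eq:buildPHF_C}) simply has one fewer term and the termwise limit still goes through.
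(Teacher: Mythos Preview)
Your proposal is correct and follows essentially the same approach as the paper: simultaneous induction on $n$ for Thm.~\ref{thm:Convergence} and Lemma~\ref{lemma:contDecoupling}, using Lemma~\ref{lemma:limit_Decoupling} to obtain the continuous decoupling function from the discrete one, then taking the termwise limit of (\ref{eq:buildPHF}) via Lemma~\ref{lemma:convergence_toolkit1} to reach (\ref{eq:buildPHF_C}). The only minor deviation is that you derive the bound $u,v\le\alpha$ from the pole-order bookkeeping at the end of Thm.~\ref{thm:buildPHF}, whereas the paper instead reads it off directly from the convergence (\ref{eq:convergence_thm_1}) (a finite nonzero limit with scaling exponent $k\pi/\theta+2n$ forces the pole orders at $x=1,y=1$ to be at most that exponent); both routes work.
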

\begin{proof}[Proof of Thm.~\ref{thm:Convergence} and Lemma~\ref{lemma:contDecoupling}]
We prove the theorem and the lemma simultaneously by induction. For $n=1$ everything can be checked by a direct computation. Now suppose everything is shown up to some $n$. By Lemma~\ref{lemma:limit_Decoupling}, we know that we can define a continuous decoupling function $f_n^k(x)$ of $\lap(h_n^k)(x,y)$ via a the scaling limit $f_n^k(x):=\lim_{\mu\to 0}\mu^{k\pi/\theta}F_n^k(e^{-\mu x})$, and we also know that it is of the form $f(x)=\beta x^{-k\pi/\theta-2n}$, so in particular Lemma~\ref{lemma:contDecoupling} holds for $n+1$ as well. Having now completely proven Thm.~\ref{thm:PHF_C} for $n+1$ (where we utilized Lemma~\ref{lemma:contDecoupling}), we can take the piecewise limit of (\ref{eq:buildPHF}). Using in particular Lemma~\ref{lemma:convergence_toolkit1}, and the definition (\ref{eq:buildPHF_C}) of $\lap(h_{n+1}^k)(x,y)$, we have for $\alpha=k\pi/\theta+2n$ and (to save space) $e_x=e^{-\mu x}$, $e_y:=e^{-\mu y}$, \begin{align*}
    &\lim_{\mu\to 0}\mu^{\alpha+2}H_n^k\left(e_x,e_y\right)\\
    =&\lim_{\mu\to 0}\frac{\mu^{\alpha}\left(e_xe_yK(e_x,e_y)-F_n^k(e_x)-\left[X_+(e_y)e_yH_n^k\left(X_+(e_y),e_y\right)-F_n^k\left(X_+(e_y)\right)\right]\right)}{\mu^{-2}K(e^{-\mu x},e^{-\mu y})}\\
    =&\frac{\lap(h_n^k)(x,y)-f_n^k(x)-\left[\lap(h_n^k)(c_+y,y)-f_n^k(c_+y)\right]}{\gamma(x,y)}\\
    =&\lap(h_{n+1}^k)(x,y).
\end{align*} 
The degree of $q_n^k(x,y)$ and the value of $\alpha$ in (\ref{eq:convergence_thm_2}) can be checked by a direct computation (note that it is allowed that $q_n^k(x,y)$ be divisible by some power of $x$ or $y$). From there it follows immediately that $\alpha=k\pi/\theta+2n$ is an upper bound of $u,v$ using (\ref{eq:convergence_thm_1}).
\end{proof}

\section{Guessing a decoupling function}\label{sec:guessing}
In \cite{Poly}, the authors used an entirely different approach to find biharmonic functions, which may not be as easy to generalize as the method above, but is in many ways a more elementary and intuitive approach. As we will see, their guessing method using an ansatz can be shown to be effective for computation of biharmonic functions whenever decoupling is possible. Unlike the constructive approach above, this ansatz allows to rule out the existence of decoupling functions of a sufficiently nice shape.\\
If we substitute $x\mapsto X_\pm(y)$ into (\ref{eq:FE}), then we obtain (see \cite[(30)]{Poly})\begin{align}\label{eq:eq30poly}
K(X_+,0)H_2(X_+,0)-K(X_-,0)H_2(X_-,0)=y\left[X_+H_1(X_+,y)-X_-H_1(X_-,y)\right].
\end{align}
Our goal is to rewrite the right-hand side of (\ref{eq:eq30poly}) for $H_1(x,y)=\frac{P(\omega(x))-P(\omega(X_+))}{K(x,y)}$. Seeing as we substitute $x\mapsto X_\pm$, where the denominator is $0$, we need to utilize L'Hôpital's rule, which gives us (note that $K(x,y)=\tilde{a}(y)\left(x-X_+(y)\right)\left(x-X_-(y)\right)$ \begin{align}
    X_+yH_1(X_+,y)=\frac{yX_+(y)\omega'(X_+)}{\tilde{a}(y)\left(X_+-X_-\right)}P'(\omega(X_+)),\\ X_-yH_1(X_-,y)=\frac{yX_-\omega'(X_-)}{\tilde{a}(y)\left(X_--X_+\right)}P'(\omega(X_-)).
\end{align}
Noting that $P'(\omega(X_+))=P'(\omega(X_-))$ due to the invariance property of $\omega$, in order to find a decoupling function $F(x)$ it would be enough to find an $F$ such that \begin{align}
    F(X_+)-F(X_-)=\frac{yX_+(y)\omega'(X_+)}{\tilde{a}(y)\left(X_+-X_-\right)}-\frac{yX_-\omega'(X_-)}{\tilde{a}(y)\left(X_--X_+\right)}.
\end{align}
To do so, we utilize a parametrization of the zero set of the kernel.

\subsection{A parametrization of the kernel curve}\label{sec:parametrization}
The curve $\mathcal{C}:=\{(x,y)\in\bar{\C}^2:K(x,y)=0\}$ permits a parametrization of the form \begin{align}
    x(s)&=\frac{(s-s_1)(s-1/s_1)}{(s-s_0)(s-1/s_0)},\\
    y(s)&=\frac{(\rho s-s_3)(\rho s-1/s_3)}{\rho s-s_2)(\rho s-1/s_2)},
\end{align}
where $\rho=e^{-i\theta}$, \begin{align}\label{eq:guessing_parametrization_s0}
    s_0&=\frac{2-(x_1+x_4)-2\sqrt{(1-x_1)(1-x_4)}}{x_4-x_1},\\
    s_1&=\frac{x_1+x_4-2x_1x_4-2\sqrt{x_1x_4(1-x_1)(1-x_4)}}{x_4-x_1},\label{eq:guessing_parametrization_s1}
\end{align}
with similar definitions for $s_2$ and $s_3$ using $y_1,y_4$ instead of $x_1,x_4$, where the $x_i$ and $y_i$ are defined by the zeros of the discriminant of the kernel as in Section~\ref{sec:Prelims} (see \cite{Book,MBM} for details). Using this parametrization, we have \cite[2.3]{FR11} \begin{align}
    x\left(\frac{1}{s}\right)=x(s),\quad  y\left(\frac{q}{s}\right)=y(s),
\end{align}
with $q:=e^{2i\theta}=1/\rho^2$. One can deduce that the mappings $s\mapsto \frac{1}{s}, s\mapsto \frac{q}{s}$ correspond to the restriction of the group to $\mathcal{C}$, and due to the invariance properties of $x(s),y(s)$ and $\omega(x)$, we see that $\omega(x(s))=s^{\pi/\theta}+s^{-\pi/\theta}+c$, for $c$ some some constant. In the following, seeing as with $\omega(x)$, we know that $\omega(x)-c$ will also be a suitable conformal mapping for our purposes, we will assume that $\omega(x(s))=s^{\pi/\theta}-s^{-\pi/\theta}$. \\
Using these parametrizations, one eventually finds that the right-hand side of (\ref{eq:eq30poly}) written in terms of $s$ takes the form \begin{align}\label{eq:guessing_2}
    c\left(s^{\pi/\theta}-s^{-\pi/\theta}\right)\frac{Q(s)}{s^2(s-1)(s+1)(s-q)(s+q)}=:B(s),
\end{align}
where $c\in\C$ is a constant and $Q(s)$ is a polynomial of degree $8$. Noticing that the mapping $s\mapsto \frac{s}{q}$ maps $X_+$ to $X_-$, we want to find a function $F$ such that \begin{align}\label{eq:guessing_1}
    f(s)-f(s/q)=B(s).
\end{align}
Then, one would only need to find a way to write $f(s)$ -- which must inherit the invariance property $f(s)=f(1/s)$ from $x(s)$ -- to a function of the form $F(x(s))$, and we would have our decoupling function. 

\subsubsection*{Rewriting the boundary value problem}

The following computations were originally done in \cite{DecouplingEq}. Writing \\$H(x,y)=\frac{P(\omega(x))-P(\omega(X_+))}{K(x,y)}$ we have, using L'Hôpital's rule, \begin{multline}
X_+yH(X_+,y)-X_-yH(X_-,y)\\=\frac{yX_+(y)\omega'(y)}{\tilde{a}(y)\left[X_+(y)-X_-(y)\right]}P'(\omega(X_+(y)))-\frac{yX_-(y)\omega'(y)}{\tilde{a}(y)\left[X_-(y)-X_+(y)\right]}P'(\omega_-(y))),
\end{multline}
where $\tilde{a}(y)$ is defined as in Section~\ref{sec:Prelims}. As $\omega(X_+(y))=\omega(X_-(y))$, it therefore suffices to find a decoupling function $F(x)$ such that \begin{align}\label{eq:ansatz_rewriting_1}
    F(X_+)-F(X_-)=\frac{yX_+(y)\omega'(X_+(y))}{\tilde{a}(y)\left[X_+(y)-X_-(y)\right]}-\frac{yX_-(y)\omega'(X_-(y))}{\tilde{a}(y)\left[X_-(y)-X_+(y)\right]}.
\end{align}
Using the parametrization, we have $X_+(y)=x(s)$, $X_-(y)=x(q/s)$. (\ref{eq:ansatz_rewriting_1}) thus becomes
\begin{align}\label{eq:ansatz_rewriting_2}
    f(x(s))-f(x(q/s))=\frac{y(s)x(s)\omega'(x(s))}{\tilde{a}(y(s))\left[x(s)-x(q/s)\right]}-\frac{y(s)x(s)\omega'(x(q/s))}{\tilde{a}(y(s))\left[x(q/s)-x(s)\right]}.
\end{align}
In order to simplify the right-hand side of (\ref{eq:ansatz_rewriting_2}), the main idea is to utilize the fact that $\omega(x(s))=s^{\pi/\theta}+s^{-\pi/\theta}$, thus \begin{align}
    \omega'(x(s))x'(s)=\frac{\pi}{\theta}\frac{1}{s}\left(s^{\pi/\theta}-s^{-\pi/\theta}\right).
\end{align}
We therefore rewrite \begin{align}
 \frac{y(s)x(s)\omega'(x(s))}{\tilde{a}(y(s))\left[x(s)-x(q/s)\right]}=\underbrace{\frac{y(s)}{\tilde{a}(y(s))\left[x(s)-x(q/s)\right]}}_{:=T_1(s)}\underbrace{\frac{x(s)}{x'(s)}}_{:=T_2(s)}\underbrace{\omega'(x(s))x'(s)}_{:=T_3(s)}.
\end{align}

By utilizing the fact that \begin{align}
    T_1(s)=\frac{y(s)}{\tilde{a}(y(s))\left[x(s)-x(q/s)\right]}=\frac{y(s)}{\tilde{D}(y(s))},
\end{align}
with $\tilde{D}$ the determinant from Section~\ref{sec:Prelims}, after some computations one obtains that, for some constant $c_1$, \begin{align}
    T_1(s)=c_1\frac{(\rho s_3 s-1)(\rho s -s_3)(\rho s_2 s-1)(\rho s-s_2)}{s(\rho s-1)(\rho s+1)}.
\end{align}
Similarly, we have \begin{align}
    T_2(s)=c_2\frac{(s_0 s-1)(s-s_0)(s_1 s-1)(s-s_1)}{(s+1)(s-1)}
\end{align}
for some constant $c_3$\footnote{This is the only part where our assumption $y_4\neq\infty$ comes into play; if $y_4=\infty$ we find that the denominator of $T_1(s)$ stays the same while the numerator is quadratic.}. We can therefore, after again some short computations, rewrite \begin{multline}
    \frac{y(s)x(s)\omega'(x(s))}{\tilde{a}(y(s))\left[x(s)-x(q/s)\right]}-\frac{y(s)x(s)\omega'(x(q/s))}{\tilde{a}(y(s))\left[x(q/s)-x(s)\right]}\\
    =\frac{y(s)}{\tilde{a}(y(s))\left[x(s)-x(q/s)\right]}\frac{\pi}{\theta}\frac{1}{s}\left(s^{\pi/\theta}-s^{-\pi/\theta}\right)\left(\frac{x(s)}{x'(s)}-\frac{s^2}{q}\frac{x(q/s)}{x'(q/s)}\right).
\end{multline}
After simplifying the last factor, we end up with
\begin{footnotesize}\begin{multline}\label{eq:guessing_rewriting_3}
   \frac{y(s)x(s)\omega'(x(s))}{\tilde{a}(y(s))\left[x(s)-x(q/s)\right]}-\frac{y(s)x(s)\omega'(x(q/s))}{\tilde{a}(y(s))\left[x(q/s)-x(s)\right]}\\
    =c\left(s^{\pi/\theta}-s^{-\pi/\theta}\right)\frac{(\rho s_3 s-1)(\rho s -s_3)(\rho s_2 s-1)(\rho s-s_2)(s-s_4)(s-q/s_4)(s-s_5)(s-q/s_5)}{s^2(s+1)(s-1)(s-q)(s+q)},
\end{multline}
\end{footnotesize}\noindent
where $c,s_4,s_5$ are constants. 
\subsection{The ansatz}
The guessing method used by the authors of \cite{Poly} is to search for a $f(s)$ of the form \begin{align}\label{eq:guessing_ansatz}
    f(s)=c\left(s^{\pi/\theta}-s^{-\pi/\theta}\right)\frac{s^pR(s)}{s-s^{-1}},
\end{align}
where $p$ is some constant and $R(s)$ rational. Utilizing (\ref{eq:guessing_rewriting_3}) and the fact that (due to the invariance property of $x(s)$) we must have $f(s)=f\left(1/s\right)$, it turns out that this already implies $p=-3$, and that $R(s)$ must be a reciprocal polynomial of degree $6$\footnote{In the case $y_4=\infty$, one finds $p=-1$ and $R(s)$ of degree $2$.}. From here, one can simply write $R(s)=(s-z_1)(s-\frac{1}{z_1})\dots (s-\frac{1}{z_3})$, and check to see if it is possible to find $z_1,z_2,z_3$ such that (\ref{eq:guessing_1}) holds. After some more calculations one finds that we want to solve \begin{multline}
    \label{eq:decoupling_ansatz_finaleq}
    (s - q) (s + q) R(s) - s^6q^{-2}(s - 1) (s + 1) R\left(q/s\right)=\\c(\rho s_3 s-1)(\rho s -s_3)(\rho s_2 s-1)(\rho s-s_2)(s-s_4)(s-q/s_4)(s-s_5)(s-q/s_5)
\end{multline}
for some constant $c$.
In (\ref{eq:guessing_2}), everything except for $s^{\pm\pi/\theta}$ is rational in $s$, and $s^{\pi/\theta}$ is invariant under $s\mapsto s/q$ (by definition, we have $q^{\pi/\theta}=e^{2i\theta\cdot \pi/\theta}=1$). Therefore it is not very surprising that we can, in a sense, leave the invariant factor $\left(s^{\pi/\theta}-s^{-\pi/\theta}\right)$ alone and find a decoupling function for the remaining part only. This is formalized by the following lemma. 
\begin{lemma}\label{lemma:guessing}
Let $t:=s^{\pi/\theta}$, and let $f_1\in\mathbb{C}(s,t)$ such that $f_1\in\C(s,t)$ is a decoupling function of $b(s)=R(s)h(t)$. Then we can find $f\in\mathbb{C}(s)$ such that $f$ is a decoupling function of $R(s)$.
\end{lemma}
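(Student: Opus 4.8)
The plan is to exploit that the map $\tau\colon s\mapsto s/q$ underlying the decoupling equation $f(s)-f(s/q)=\cdot$ leaves $t=s^{\pi/\theta}$ fixed. Indeed $q^{\pi/\theta}=e^{2i\theta\cdot\pi/\theta}=1$, so $\tau(t)=(s/q)^{\pi/\theta}=t$; viewing $\tau$ as a $\C$-algebra automorphism of $\C(s,t)$, it therefore fixes the subfield $\C(t)$ pointwise. In particular $h(t)$ is $\tau$-invariant and, being nonzero, a unit of the field $\C(s,t)$. Hence if $f_1\in\C(s,t)$ satisfies $f_1-\tau(f_1)=R(s)\,h(t)$, then, using $\tau(f_1/h(t))=\tau(f_1)/\tau(h(t))=\tau(f_1)/h(t)$, the element $g:=f_1/h(t)\in\C(s,t)$ satisfies $g-\tau(g)=R(s)$. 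So $g$ is already a decoupling function of $R(s)$, and the only remaining task is to replace it by one lying in $\C(s)$, i.e.\ to descend along $\C(s)\subseteq\C(s,t)$.

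For the descent, the point is that $\C(s,t)$ admits a $\C(s)$-module decomposition $\C(s,t)=\C(s)\oplus W$ with both summands $\tau$-stable, so that the projection $\pi_0\colon\C(s,t)\to\C(s)$ commutes with $\tau$; applying $\pi_0$ to $g-\tau(g)=R(s)$ and using $\pi_0(R(s))=R(s)$ then yields $f:=\pi_0(g)\in\C(s)$ with $f-\tau(f)=R(s)$, as wanted. If $\pi/\theta$ is rational with denominator $d$ (in lowest terms), then $t$ is algebraic of degree $d$ over $\C(s)$ (because $t^d\in\C(s)$ while $t^k\notin\C(s)$ for $0<k<d$, using coprimality of numerator and denominator of $\pi/\theta$), so $\{1,t,\dots,t^{d-1}\}$ is a $\C(s)$-basis, $\tau$ acts diagonally on it since it fixes $t$, and $\pi_0(g)$ is simply the $t^0$-coefficient of $g$; comparing $t^0$-coefficients on both sides of $g-\tau(g)=R(s)$ gives the claim directly. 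If $\pi/\theta\notin\Q$ one instead regards $t$ as transcendental over $\C(s)$, takes $W$ to be the span of the monomials $t^k$ with $k\geq1$ together with the proper partial fractions in $t$ over $\C(s)$ (a decomposition preserved by $\tau$, since $\tau$ fixes $t$), and lets $\pi_0(g)$ be the constant term in $t$ of the polynomial part of $g$.

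The conceptual content is entirely in the identity $\tau(t)=t$, which is immediate from $q^{\pi/\theta}=1$, so I do not expect a serious obstacle. The only things requiring care are the routine bookkeeping in the descent step — checking that $\{1,\dots,t^{d-1}\}$ really is a $\C(s)$-basis and that the chosen complement $W$ is $\tau$-stable — and keeping track of the dichotomy between rational and irrational $\pi/\theta$, though both are dispatched by the same recipe of projecting onto the $\C(s)$-summand.
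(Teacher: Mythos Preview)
Your argument is correct and takes a cleaner route than the paper. Both proofs begin identically: use $q^{\pi/\theta}=1$ to see that $\tau$ fixes $t$, divide by the $\tau$-invariant unit $h(t)$, and obtain $g=f_1/h(t)\in\C(s,t)$ with $g-\tau(g)=R(s)$. The difference is in the descent to $\C(s)$. In the irrational case the paper argues via unique factorisation in $\C(s)[t]$ that $g$ itself already lies in $\C(s)$ (an orbit argument: a nontrivial irreducible factor of the denominator would spawn infinitely many $\tau$-translates). In the rational case the paper passes through an explicit isomorphism $\C(s,t)\cong\C(T)$ and invokes the additive form of Hilbert's Theorem~90 to compare trace conditions. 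Your approach instead handles both cases uniformly by exhibiting a $\tau$-equivariant $\C(s)$-linear retraction $\pi_0\colon\C(s,t)\to\C(s)$ (constant term in $t$, using either the basis $1,t,\dots,t^{d-1}$ in the rational case or the polynomial-plus-partial-fractions decomposition in the irrational case), and setting $f=\pi_0(g)$. This is more elementary and avoids Hilbert~90 entirely; the paper's irrational-case argument has the minor advantage of proving the sharper fact that $g\in\C(s)$ outright, but for the lemma as stated your projection suffices. One small point worth making explicit in the rational case is that $\tau$ is well-defined on $\C(s)[t]/(t^d-s^m)$: this needs $q^m=1$, which follows from $q=e^{2i\theta}$ and $\theta=d\pi/m$.
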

\begin{proof}
Let $\tau$ be the automorphism $s\mapsto s/q$, which by definition of $q$ leaves $t$ invariant. We have two cases here, depending on whether or not $\pi/\theta$ is rational or not, i.e. if there is some algebraic relation between $s$ and $t$.
\begin{enumerate}
    \item \underline{$\pi/\theta\notin\Q$:}\newline
In this case, as we are always working with rational functions, we can utilize the fact that $s$ and $t$ are algebraically independent. By invariance of $t$ under $\tau$, we can thus write \begin{align}\label{eq:guessing_3}
    \frac{f_1(\tau s,t)}{h(t)}-\frac{f_1(s,t)}{h(t)}=R(s).
\end{align}
Due to the independence of $s$ and $t$ we can treat the right-hand side, viewed as a rational function coefficients in $\C(s)$, like a constant function in $t$.\\
Now let $f_2(s,t):=\frac{f_1(s,t)}{h(t)}=\frac{u(s,t)}{v(s,t)}$, with $u,v\in\mathbb{C}(s)[t]$. We factor $u,v$ into their irreducible components $(u_i),(v_i)$ over $\mathbb{C}(s)$. Any $u_i,v_i$ which lies in $\C[t]$ must cancel, because else it would be a factor of the entire left-hand side and thus of $R(s)$, a contradiction. Thus we can assume that all $u_i, v_i$ lie in $\C(s)[t]\setminus\C[t]$.\\
Suppose there is an $i$, which we can assume to be $1$, such that $\deg_t(v_i)>0$. Then, as the resulting pole must cancel, we know there is a $j$, let us say $j=2$, such that $v_2(s,t)=v_1(\tau(s),t)$. Proceeding inductively, we can construct a sequence $(v_i)$ such that $v_1(\tau^{n-1}(s),t)=v_n(s,t)$. But as $\tau^n\neq \operatorname{id}$ for all $n\in\N$ since $q\notin\Q$, this procedure will never stop, i.e. we would need to have an infinite number of factors $v_i$, which is impossible. Therefore, we know that $v(s,t)=v(s)\in\C(s)$; and by looking at the degree of the left-hand side of (\ref{eq:guessing_3}), therefore $u(s,t)=u(s)\in\C(s)$ as well. Hence, $f_2=\frac{f_1}{h}\in\C(s)$, and therefore it is a valid decoupling function of $R(s)$.
\item \underline{$\pi/\theta\in\Q$:}\\
Let $\pi/\theta=\frac{m}{n}$, with $m,n\in\N$, $(m,n)=1$. While before we could simply consider the irreducible factors of numerator and denominator of $f_2$, this is not now so simple anymore, as the algebraic structure of $\mathbb{F}:=\operatorname{Quot}\left(\bigslant{\mathbb{C}[s,t]}{\langle s^m-t^n\rangle}\right)$ is not as obvious. It would, for instance, not immediately make sense to talk about the degree of an expression. Therefore, we need to work around this issue. We know that $\operatorname{gcd}(m,n)=1$, and thus there are $a,b$ such that $am+bn=1$. Now consider the mapping\footnote{This is merely a formalisation of the basic idea of adjoining a simple element $T$ such that $T^n=s, T^m=t$, which serves the role of an $n$-th root of $s$.} \begin{align}
    \phi: \quad \mathbb{C}(T)\to \mathbb{F}: \quad T\mapsto s^bt^a.
\end{align}
One can check that an inverse map $\phi^{-1}$ is given by $(s,t)\mapsto (T^n,T^m)$, and that $\phi$ is in fact an isomorphism. Our automorphism $\tau$ can now be carried over to $\mathbb{C}(T)$ thus we obtain an automorphism $\sigma:=\phi^{-1}\circ\tau$. As $\tau$ and $\phi$ both fix $\mathbb{C}$, so does $\sigma$. Therefore, we know that $\sigma$ is of the form \begin{align}
    \sigma:\quad T\mapsto \frac{c_1 T+c_2}{d_2T+d_2}
\end{align}
for some $c_1,c_2,d_1,d_2$. From this, one obtains that $\sigma(T)=\zeta T$, where $\zeta^n=q, \zeta^m=1$, which means $\zeta=\exp \left( 2i\pi \frac{1}{m}\right)$.\\
Translating the decoupling property (\ref{eq:guessing_3}) to $T$, we now have, with $f_3=\phi^{-1}\circ f_2$ \begin{align}
    f_3(\sigma T)-f_3(T)=R_1(T).
\end{align}
By Hilbert's Thm. 90 \cite[4.6.5]{Book} such a $f_3$ can exist if and only if $\operatorname{Tr}_\sigma R_1=0$, that is, if \begin{align}
    \sum_{k=1}^m R_1(\zeta^k T)=0.
\end{align}
But using that $R_1(T):=\phi^{-1}\left(R(s)\right)=R(T^n)$ and that $\zeta^n=q$, we have \begin{align}
    \operatorname{Tr}_\sigma R_1=\sum_{k=1}^m R\left(\zeta^{kn}T^n\right)=\sum_{k=1}^m R\left(\phi^{-1}(q^k s)\right)=\phi^{-1}\left[\sum_{k=1}^m R(q^ks)\right]=\phi^{-1}\left(\operatorname{Tr}_\tau R\right).
\end{align}
As $\phi$ leaves $\mathbb{C}$ fixed, we therefore know that $\operatorname{Tr}_\sigma R_1=0 \Leftrightarrow \operatorname{Tr}_\tau R=0$, and thus a decoupling of $R_1(T)$ is possible if and only if a decoupling of $R(s)$ is possible. 
\end{enumerate}
\end{proof}
Lemma~\ref{lemma:guessing} implies that if the term $B(s)$ in (\ref{eq:guessing_2}) has a decoupling function in $\C\left(s,s^{\pi/\theta}\right)$, which is the most natural space to search a solution, then this decoupling function will already as a factor decouple the part not containing $s^{\pi/\theta}$. This is precisely the decoupling function we were searching for in the ansatz, which must therefore work.\vskip
\baselineskip
\textbf{Remarks:} \begin{itemize}
    \item This allows to show for concrete models with infinite group that a decoupling function in $\C\left(x,y,\omega(x),\omega(y)\right)$ cannot exist. However, it is not obvious how to show that an infinite group implies that there is no decoupling function. In particular, it is not at all clear how the group (as defined in Section~\ref{sec:Decoupling}, i.e. the birational transformations) being finite can be grasped in terms of the parametrization, where all that is left is the restriction of the group to the curve $\mathcal{C}$. Section~\ref{sec:pitheta2} will contain some examples where the restriction of the group, but not the group itself is finite.
    \item While it would be natural to assume that any decoupling function would be in $\C\left(s,s^{\pi/\theta}\right)$, this is not always the case. We will see an example for this in Example~\ref{sec:pitheta2_infinite_example}, where the ansatz will not work but we will construct a (non-algebraic) decoupling function. In the setting of Section~\ref{sec:Decoupling}, however, one checks immediately by (\ref{eq:DecouplingFormula}) that if ever the group is finite, then the resulting decoupling functions will be rational in $s$ and $s^{\pi/\theta}$. 
\end{itemize}

\subsubsection{Example: a decoupling function for the tandem walk using the ansatz}
Recall that the tandem walk was defined by the step set $\mathcal{S}=\{\nwarrow,\rightarrow,\downarrow\}$ with weight $\frac{1}{3}$ each. We had $K(x,y)=\frac{1}{3}\left(x^2+y+xy^2\right)-xy$, from which we obtain $x_1=0,x_4=4, y_0=\frac{1}{4},y_4=\infty$, and thus $s_0=-\frac{1}{2}(1+i\sqrt{3}),s_1=1,s_2=-1,s_3=\frac{1}{2}(1-i\sqrt{3})$. Furthermore, we have $\pi/\theta=3$, and thus $q:=e^{i2\theta}=-\frac{1}{2}(1-i\sqrt{3})$ as well as $\rho:=e^{-i\theta}=\frac{1}{2}(1-\sqrt{3})=-q$. This leads to \begin{align}
    x(s)=\frac{(s-1)^2}{1+s+s^2},\quad y(s)=\frac{-\frac{1}{2}(1-i\sqrt{3})s^2-\frac{1}{2}(1+i\sqrt{3})s+1}{\left(s+\frac{1}{2}(1+i\sqrt{3}\right)^2}.
\end{align}
After some computation, we see that the right-hand side of (\ref{eq:decoupling_ansatz_finaleq}) takes the form \begin{align}
    1/6 i (s-1)^3 (2 \sqrt{3} + (\sqrt{3}+3i) s) (1 + s + s^2)^2.
\end{align}
Letting $R(s)=(1-z_1)(1-z_2)(1-z_3)(1-1/z_1)(1-1/z_2)(1-1/z_3)$ and solving (\ref{eq:decoupling_ansatz_finaleq}) for $z_1,z_2,z_3$ yields multiple solutions, for instance $c=-\frac{1}{3}, z_1=1, z_2=i, z_3=-\frac{1}{2}(1+i\sqrt{3})$. This particular one leads to \begin{multline}
f(s):=c\left(s^3-s^{-3}\right)\frac{R(s)}{s^4-s^2}\\=\frac{1-s+2s^2-3s^3+3s^4-4s^5+3s^6-3s^7+2s^8-s^9+s^{10}}{3s^5}.
\end{multline}
To transform this back into a function of $x$, we utilize that \begin{align}
    \frac{1}{1-x(s)}=\frac{1+s+s^2}{3s}.
\end{align}
Making an ansatz of $f(s)=a_0+a_1\frac{1}{1-x(s)}+\dots+a_5\frac{1}{(1-x(s))^5}$ gives $a_0=a_1=0,a_2=-18,a_3=99,a_4=-162,a_5=81$. Putting this together, we finally obtain the decoupling function \begin{align}
    f(x)=\frac{9x (2 + 5 x + 2 x^2)}{(1 - x)^5}.
\end{align}
Note that this decoupling function is different from the one computed in Example~\ref{sec:exampleTandem}, where we obtained $F(x)=-\frac{81x^3}{4(1-x)^5}$. 

\subsubsection{Example: trying to decouple a model with infinite group}\label{sec:ansatz_infiniteexample}
Consider the model with the step set \begin{align}
    p_{1,0}=p_{0,1}=0,\\
 p_{1,1}=1/4,\\
 p_{1,-1}=p_{0,-1}=p_{-1,0}=p_{-1,1}=1/6,\\
 p_{-1,-1}=1/12.
\end{align}
This model has an infinite group, as can be seen by Thm.~\ref{thm:pitheta2:groupfinite}. We have $K(x,y)=\frac{1}{12}(1+2x+2y+2x^2+2y^2+3x^2y^2)-xy$, and can compute $x_{1,4}=y_{1,4}=\frac{1}{6}(\pm5\sqrt{3}-9)$. As $\pi/\theta=2$, we have $q=-1,\rho=i$. After some calculations, one finds that the right-hand side of (\ref{eq:decoupling_ansatz_finaleq}) takes, up to a multiplicative constant, the form \begin{align}
    s(\sqrt{2} - \sqrt{3} + i s) (\sqrt{3} + i s) (\sqrt{3} + 3 i s)  (1 + 
   i(\sqrt{2} - \sqrt{3}) s) (-1 + s^2).
\end{align} 
We can check that solving (\ref{eq:decoupling_ansatz_finaleq}) does not give any solutions for $z_1,z_2,z_3$, thus we cannot find a decoupling function using the ansatz. In particular, in light of Lemma~\ref{lemma:guessing} this implies that there is no rational (in $s$) decoupling function. 
We will, however, see how one can construct a non-algebraic decoupling function for this model using a contour integral in Example~\ref{sec:pitheta2_infinite_example}.

\section{The special case \texorpdfstring{$\pi/\theta=2$}{}}\label{sec:pitheta2}
In this section, we will consider the special case where $\theta=\pi/2$ ($\theta$ is the angle defined in Sec.~\ref{sec:Prelims}, and can be computed by (\ref{eq:defTheta})). This is done for two different reasons: firstly, we will see that in this setting we gain a number of nice properties; in particular for the group being finite (Thm.~\ref{thm:pitheta2:groupfinite}), in which case we the polyharmonic functions can be explicitly computed (Thm.~\ref{thm:pitheta2:finitegroup_PHF}). On the other hand, this setting allows us to explicitly compute decoupling functions in the case of an infinite group with comparably little effort, which are -- albeit not rational -- still guaranteed to exist by general theory about complex boundary value problems, see for example \cite[§4]{Gakhov},\cite[5.]{Book}. This will be done starting from Section~\ref{sec:pitheta2_infinitegroupcase}. In particular, we will see that the resulting functions are not even algebraic anymore (though still D-finite).\\
The case $\pi/\theta=2$ includes a number of standard models, such as the simple walk, the king's walk or the diagonal walk. It is characterized by the property \begin{align}\label{eq:pitheta2:condition_pitheta=2}
p_{1,1}+p_{-1,-1}=p_{1,-1}+p_{-1,1},
\end{align}
that is, the sums of the weights of the two diagonals are the same. This is a direct consequence of (\ref{eq:defTheta}). Also note that that the function $f(i,j)=ij$ satisfies $\triangle f(i,j)=0$ in this case, which corresponds to the fact that, up to a multiplicative constant, $H_1^1(x,y)=\frac{1}{(1-x)^2(1-y)^2}$. This can be checked directly using (\ref{eq:intro_1}). 

\subsection{A criterion for the group to be finite}
Deciding if the group of a given model is finite is in general not an easy problem, as can be seen for example by the very computationally heavy approach in \cite{KY}, or by the approach in \cite{MBM} where it was done for unweighted small-step models using a combination of eigenvalue properties and valuations. While for the standard models with $\pi/\theta=2$ like the simple walk, the diagonal walk or the king's walk the group is finite, this is not always the case\footnote{The fact that $\pi/\theta=2\in\Z$ guarantees that the \textit{restriction of the group on} $\mathcal{C}$ is finite, see e.g. \cite{FR11}, but not that it is finite on all of $\C^2$.}. Take for example the model with probabilities \begin{align}
 p_{1,0}=p_{0,1}=0,\\
 p_{1,1}=1/4,\\
 p_{1,-1}=p_{0,-1}=p_{0,-1}=p_{-1,1}=1/6,\\
 p_{-1,-1}=1/12.
\end{align}
One can check immediately that while we have $\pi/\theta=2$ and the restriction of the group has order $4$, the group itself is infinite.\\
In the case of $\pi/\theta=2$ we will show in this section that there is a very intuitive way to classify the behaviour of the group of a given model: it is finite of order $4$ if the model has either a North-South or an East-West symmetry, else it is infinite. To do so, we will first show that the group is of order four precisely if one of these symmetries holds; and then afterwards show that the group being finite leads directly back to this case. We will start with a technical lemma in order to shorten later computations. 

\begin{lemma}\label{lemma:pitheta2_onesymmetryallsymmetry}
Suppose we have a non-degenerate model with small steps, zero drift and $\pi/\theta=2$. If there is an $i\in\{-1,0,1\}$ such that $p_{1,i}=p_{-1,i}$, then we have $p_{1,j}=p_{-1,j}$ for $j\in\{-1,0,1\}$. Similarly, if there is an $i\in\{-1,0,1\}$ such that $p_{i,1}=p_{i,-1}$, then we have $p_{j,1}=p_{j,-1}$ for $j\in\{-1,0,1\}$.
\end{lemma}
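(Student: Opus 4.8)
The plan is to reduce the statement to a short elimination among the four ``diagonal'' probabilities. Abbreviate $a=p_{1,1}$, $b=p_{1,-1}$, $c=p_{-1,1}$, $d=p_{-1,-1}$. Two linear relations are available for free. First, since $\theta=\pi/2$ is equivalent to $\cos\theta=0$ and $\sum_{(i,j)\in\mathcal{S}}ij\,p_{i,j}=a+d-b-c$ (all mixed terms with a vanishing coordinate drop out), formula~(\ref{eq:defTheta}) gives $a+d=b+c$, which is exactly (\ref{eq:pitheta2:condition_pitheta=2}); here non-degeneracy is only needed to guarantee that the two square roots in~(\ref{eq:defTheta}) are positive, so that $\theta$ is well defined. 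Second, the zero-drift assumption~(iii) of Section~\ref{sec:Prelims} in the horizontal direction reads $a+p_{1,0}+b=c+p_{-1,0}+d$, and in the vertical direction $a+p_{0,1}+c=b+p_{0,-1}+d$.

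Next I would split into the three cases $i\in\{0,1,-1\}$ for the first assertion. If $p_{1,0}=p_{-1,0}$, the horizontal drift relation becomes $a+b=c+d$; adding this to $a+d=b+c$ and cancelling yields $a=c$, i.e.\ $p_{1,1}=p_{-1,1}$, and then $a+b=c+d$ forces $b=d$, i.e.\ $p_{1,-1}=p_{-1,-1}$. If instead $a=c$ (the case $i=1$), then $a+d=b+c$ immediately gives $b=d$, after which the horizontal drift relation collapses to $p_{1,0}=p_{-1,0}$. The case $i=-1$, i.e.\ $b=d$, is symmetric: $a+d=b+c$ gives $a=c$, and then horizontal drift gives $p_{1,0}=p_{-1,0}$. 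In all three cases all of $p_{1,-1}=p_{-1,-1}$, $p_{1,0}=p_{-1,0}$, $p_{1,1}=p_{-1,1}$ hold, which is the claim. The second assertion follows by the identical argument with the roles of the two coordinates interchanged: one uses the vertical zero-drift relation in place of the horizontal one, while $a+d=b+c$ is unchanged under $x\leftrightarrow y$.

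I do not expect any real obstacle here: the whole proof is a two-line computation once the correct two identities are isolated. The only point requiring a word of care is that some of the $p_{i,j}$ may vanish when the corresponding step is absent from $\mathcal{S}$, but every identity used above holds verbatim in that situation, so no case distinction on the support of $\mathcal{S}$ is needed.
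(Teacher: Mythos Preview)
Your argument is correct and is exactly the ``direct computation, using~(\ref{eq:pitheta2:condition_pitheta=2})'' that the paper invokes without spelling out: you use the diagonal identity $a+d=b+c$ from $\pi/\theta=2$ together with the horizontal (resp.\ vertical) zero-drift relation to propagate one East--West (resp.\ North--South) equality to the other two. There is nothing to add.
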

\begin{proof}
By a direct computation, using (\ref{eq:pitheta2:condition_pitheta=2}). 
\end{proof}
This lemma tells us that we have a North-South symmetry in only one of the three possible ways, then our step set as a whole already has a North-South symmetry. We can now utilize this in the following

\begin{lemma}\label{lemma:pitheta2_4_symmetry}
Suppose we have a non-degenerate model with small steps, zero drift and $\pi/\theta=2$. Then, the group is finite of order $4$ precisely if the model has either a North-South, or an East-West symmetry. 
\end{lemma}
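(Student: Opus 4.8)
The plan is to prove both implications by writing out the group generators explicitly and using the defining relation $\pi/\theta=2$, i.e.~(\ref{eq:pitheta2:condition_pitheta=2}). Recall $\Phi:(x,y)\mapsto(x^{-1}\tilde{c}(y)/\tilde{a}(y),y)$ and $\Psi:(x,y)\mapsto(x,y^{-1}c(x)/a(x))$, and that the group is finite of order $4$ precisely when $\Theta=\Phi\circ\Psi$ is an involution, i.e.~$\Theta^2=\mathrm{id}$. So the whole statement reduces to: $\Theta^2=\mathrm{id}$ if and only if the model has a North--South or East--West symmetry.

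**The easy direction.** First I would assume a North--South symmetry, $p_{i,1}=p_{i,-1}$ for all $i$ (the East--West case is identical after swapping $x\leftrightarrow y$). Then $a(x)=\sum_i p_{i,1}x^{-i}$, $c(x)=\sum_i p_{i,-1}x^{-i}$, so $a(x)=c(x)$ and hence $\Psi:(x,y)\mapsto(x,y^{-1})$ — a genuinely simple involution independent of the coefficients. One then computes $\Theta=\Phi\circ\Psi$ and checks directly that $\Theta^2=\mathrm{id}$: applying $\Psi$ then $\Phi$ then $\Psi$ again then $\Phi$ returns to $(x,y)$, because the $x$-coordinate transformation $x\mapsto x^{-1}\tilde c(y)/\tilde a(y)$ composed with $y\mapsto y^{-1}$ closes up. This is the ``direct computation'' flagged in the lemma for the preliminary results; it is routine once $\Psi$ has been identified as $y\mapsto y^{-1}$.

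**The converse.** Now assume the group has order $4$, i.e.~$\Theta^2=\mathrm{id}$; I want to force one of the two symmetries. The cleanest route is to examine how $\Theta$ acts on the curve $\mathcal{C}$ and on the coordinate functions. Write $\Theta$ out as a birational map of $\overline{\mathbb{C}}^2$ and impose $\Theta\circ\Theta=\mathrm{id}$ as an identity of rational functions; clearing denominators gives polynomial relations among the eight probabilities $p_{i,j}$. Using $\pi/\theta=2$ to eliminate, say, $p_{1,1}+p_{-1,-1}-p_{1,-1}-p_{-1,1}=0$, these relations should collapse to the statement that either $a(x)\equiv c(x)$ (up to the relevant normalisation) or $\tilde a(y)\equiv\tilde c(y)$, which by Lemma~\ref{lemma:pitheta2_onesymmetryallsymmetry} is exactly one of the two symmetries. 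An alternative, perhaps more transparent, argument: the fixed points of $\Phi$ and of $\Psi$ on $\mathcal C$ determine $\Theta$ as a Möbius-type map in a suitable uniformising parameter $s$ (as in Section~\ref{sec:parametrization}, where $\Theta(x(s))$ corresponds to $s\mapsto s/q$ with $q=e^{2i\theta}=-1$ when $\pi/\theta=2$); $\Theta^2=\mathrm{id}$ on $\mathcal C$ is automatic there, so the extra content of finiteness on all of $\overline{\mathbb{C}}^2$ is precisely that $\Phi$ (equivalently $\Psi$) restricted off the curve is still an involution composed correctly, which pins down $a=c$ or $\tilde a=\tilde c$.

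**Main obstacle.** The delicate point is the converse, specifically separating ``the restriction of the group to $\mathcal C$ has order $4$'' — which is automatic from $\pi/\theta=2$ and carries no information — from ``the group of birational maps has order $4$''. The infinite-group example in the text shows these genuinely differ, so the argument must use the full birational structure, not just the action on $\mathcal C$. Concretely, the hard part is organising the polynomial identities coming from $\Theta^2=\mathrm{id}$ so that, after substituting the $\pi/\theta=2$ relation, they factor cleanly into ``North--South symmetry'' or ``East--West symmetry'' without a large brute-force case analysis. I expect this to hinge on showing that $\Theta^2=\mathrm{id}$ forces one of $\Phi,\Psi$ to have the coordinate-inversion form ($x\mapsto x^{-1}$ or $y\mapsto y^{-1}$), which is where the symmetry of the step weights is extracted.
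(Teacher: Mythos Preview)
Your proposal takes a genuinely different route from the paper, and the comparison is instructive.

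The paper does not work with $\Theta^2=\mathrm{id}$ directly. Instead it invokes a known determinantal criterion (\cite[Prop.~4]{Kilian_JEMS}): the group has order $4$ if and only if
\[
\det\begin{pmatrix}p_{-1,1}&p_{0,1}&p_{1,1}\\p_{-1,0}&-1/t&p_{1,0}\\p_{-1,-1}&p_{0,-1}&p_{1,-1}\end{pmatrix}=0.
\]
Expanding and simplifying using zero drift and (\ref{eq:pitheta2:condition_pitheta=2}), this determinant collapses to a constant times $(p_{-1,1}-p_{-1,-1})(p_{-1,-1}-p_{1,-1})$, which vanishes precisely when one of the two symmetries holds (via Lemma~\ref{lemma:pitheta2_onesymmetryallsymmetry}). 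Both directions are thus handled by a single short computation.

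Your easy direction is correct: under North--South symmetry, $a(x)=c(x)$ so $\Psi$ is $y\mapsto y^{-1}$, and the fact that $\tilde a(y^{-1})=y^{-2}\tilde a(y)$, $\tilde c(y^{-1})=y^{-2}\tilde c(y)$ under this symmetry makes $\Theta^2=\mathrm{id}$ a two-line check. This is more hands-on than the paper but perfectly valid.

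Your converse, however, is not a proof but a plan, and you have correctly identified where it is fragile. Writing out $\Theta^2=\mathrm{id}$ as a rational identity in $(x,y)$ produces a system of polynomial equations in the $p_{i,j}$ that is unpleasant to reduce by hand; you say the relations ``should collapse'' but do not show that they do. Your alternative via the uniformising parameter $s$ does not help either, since---as you note yourself---$\Theta^2=\mathrm{id}$ on $\mathcal{C}$ is automatic when $\pi/\theta=2$, so the parametrisation sees no difference between the finite and infinite group cases. The determinantal criterion is exactly the tool that cuts through this: it packages the full birational condition $\Theta^2=\mathrm{id}$ into a single polynomial in the weights, which then factors cleanly. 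If you want to complete your direct approach, you would effectively be re-deriving that determinant condition; it is simpler to quote it.
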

\begin{proof}
We know from \cite[Prop.~4]{Kilian_JEMS} that the order of the group is four if and only if the determinant \begin{align}
    \left|\begin{pmatrix}p_{-1,1}&p_{0,1}&p_{1,1}\\p_{-1,0}&-\frac{1}{t}&p_{1,0}\\p_{-1,-1}&p_{0,-1}&p_{1,-1}\end{pmatrix}\right|=0.
\end{align}
This determinant can be explicitly computed to be \begin{multline}
    -p_{1,0} p_{-1,1} p_{0,-1} + p_{1,0} p_{0,1} p_{-1,-1} + p_{1,1} p_{0,-1} p_{-1,0} \\- p_{0,1} p_{1,-1} p_{-1,0} -\frac{ p_{-1,1}p_{1,-1}+p_{1,1}p_{-1,-1}}{t}.
\end{multline}
Utilizing the fact that our walk has drift $0$, small steps, and (\ref{eq:pitheta2:condition_pitheta=2}), this can be simplified to \begin{align}
    \frac{t-1}{t}\left[(p_{-1,1}-p_{-1,-1}) (p_{-1,-1}-p_{1,-1})\right].
\end{align}
By Lemma~\ref{lemma:pitheta2_onesymmetryallsymmetry}, the last expression is $0$ precisely if our model has either a North-South, or an East-West symmetry.
\end{proof}

\begin{theorem}\label{thm:pitheta2:groupfinite}
Suppose we have a non-degenerate model with small steps, zero drift and $\pi/\theta=2$. Then the group is finite of order $4$ if the model has a North-South or an East-West symmetry, and it is infinite otherwise.
\end{theorem}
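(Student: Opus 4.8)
Combining with Lemma~\ref{lemma:pitheta2_4_symmetry}, it is enough to prove a single implication: \emph{if the group is finite, then it has order $4$}. Indeed, granting this, a North--South or East--West symmetry gives order $4$ by Lemma~\ref{lemma:pitheta2_4_symmetry}, a finite group of order $4$ gives one of these symmetries again by Lemma~\ref{lemma:pitheta2_4_symmetry}, and the absence of both symmetries then leaves only an infinite group, which is the claim. I would therefore establish the contrapositive: a non-degenerate zero-drift model with $\pi/\theta=2$ having neither a North--South nor an East--West symmetry has an infinite group.

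First I would record the explicit form of $\Theta=\Phi\circ\Psi$. With $\lambda(x):=c(x)/a(x)=Y_+(x)Y_-(x)$ and $\mu(y):=\tilde c(y)/\tilde a(y)=X_+(y)X_-(y)$ one has $\Psi(x,y)=(x,\lambda(x)/y)$, $\Phi(x,y)=(\mu(y)/x,y)$, hence $\Theta(x,y)=\bigl(\mu(\lambda(x)/y)/x,\ \lambda(x)/y\bigr)$, and the iterates satisfy the recursion $Q_{k+1}=\lambda(P_k)/Q_k$, $P_{k+1}=\mu(Q_{k+1})/P_k$ on the components $(P_k,Q_k)$ of $\Theta^k$. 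Two elementary observations follow. Using zero drift one checks that $\lambda$ is constant $\iff p_{i,1}=p_{i,-1}$ for every $i$, i.e. North--South symmetry, and similarly $\mu$ is constant $\iff$ East--West symmetry; so the hypothesis is exactly that $\lambda$ and $\mu$ are both non-constant (rational of degree at most $2$; the lower-dimensional cases $\deg\lambda=1$ or $\deg\mu=1$ I would treat separately, but the argument is the same in spirit). Also, since $\pi/\theta=2$ forces $q=e^{2i\theta}=-1$, the parametrisation of Section~\ref{sec:parametrization} shows that on the kernel curve $\mathcal C$ the map $\Theta$ restricts to $s\mapsto -s$, which is an involution (non-trivial because the model is non-singular, so $x(s)$ and $y(s)$ are not both even in $s$); hence $\Theta^2$ restricts to the identity on $\mathcal C$. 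Consequently, if the group were finite of order $2n>4$, then $\Theta^2$ would be a \emph{non-identity} birational self-map of $\overline{\C}^2$ of finite order that fixes $\mathcal C$ pointwise.

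The main step, and the one I expect to be the chief obstacle, is to exclude this. A birational map of finite order has only finitely many iterates and hence uniformly bounded bidegree, so it suffices to show that when $\lambda$ and $\mu$ are both non-constant the bidegrees of $\Theta^k$ are unbounded. The plan is to follow the recursion above and track the polar divisors: each application of the degree-$2$ maps $\lambda,\mu$ roughly doubles the relevant bidegree, and the point specific to $\pi/\theta=2$ is that no cancellation intervenes --- in contrast to, for instance, the Kreweras case ($\pi/\theta=3$), where one computes $\Theta(x,y)=(y,1/(xy))$ and the degree collapses accidentally. Concretely I would argue that numerator and denominator remain coprime at every step (using that $\lambda$ and $\mu$ share no factor with the kernel and that the only possible collapse would correspond to $\Theta|_{\mathcal C}$ having order other than $2$, which $\pi/\theta=2$ forbids), or equivalently that $\Theta^2$ has dynamical degree strictly larger than $1$ away from $\mathcal C$; a valuation computation in the style of \cite{MBM} should do this. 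A less self-contained alternative would be to invoke the classification of weighted small-step models with finite group (\cite{KY,MBM}) and check directly that each one satisfying \eqref{eq:pitheta2:condition_pitheta=2} has order $4$ and a symmetry. Once ``finite $\Rightarrow$ order $4$'' is in hand the theorem follows from Lemma~\ref{lemma:pitheta2_4_symmetry} as above; everything besides the degree-growth step is the explicit computation of $\Theta,\lambda,\mu$ together with bookkeeping that uses zero drift and \eqref{eq:pitheta2:condition_pitheta=2}.
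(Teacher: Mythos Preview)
Your overall reduction is the same as the paper's: via Lemma~\ref{lemma:pitheta2_4_symmetry} it suffices to show that a finite group must already have order $4$. The divergence is in how this implication is handled, and here your proposal has a genuine gap. The degree-growth argument---showing that when $\lambda$ and $\mu$ are both non-constant the bidegrees of $\Theta^k$ blow up---is only sketched, and the sketch is not convincing as stated. Cancellation in iterates of birational maps is precisely the subtle phenomenon (algebraic stability) that makes dynamical-degree arguments delicate; your claim that ``the only possible collapse would correspond to $\Theta|_{\mathcal C}$ having order other than $2$'' is not justified, and it is not clear why the behaviour on a single curve should control cancellation globally. The valuation computation you allude to in the style of \cite{MBM} would itself need to be carried out, and your own phrasing (``should do this'') signals that you have not done so. The fallback via the full weighted classification of \cite{KY} is not self-contained and arguably heavier than the theorem itself.

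The paper's argument avoids all of this with a much simpler idea: instead of tracking exact degrees, look at the \emph{asymptotic} action of the group near $(x,y)=(\infty,\infty)$. Writing $\delta,\varepsilon$ for the limits of $c(x)/a(x)$ at $\infty$ and $0$ (and $\tilde\delta,\tilde\varepsilon$ similarly), one sees that for large $(x,y)$ the orbit behaves, up to lower order, like iterated multiplication by the real nonnegative numbers $\varepsilon/\delta$ and $\tilde\delta/\tilde\varepsilon$. Finiteness then forces these ratios to be roots of unity, hence equal to $1$, which translates into the single algebraic identity $p_{1,1}p_{-1,-1}=p_{1,-1}p_{-1,1}$. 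Using \eqref{eq:pitheta2:condition_pitheta=2} this factors as $(p_{1,1}-p_{1,-1})(p_{-1,-1}-p_{1,-1})=0$, and Lemma~\ref{lemma:pitheta2_onesymmetryallsymmetry} then gives a full North--South or East--West symmetry, hence order $4$ by Lemma~\ref{lemma:pitheta2_4_symmetry}. This bypasses any iteration bookkeeping entirely; if you wish to salvage your approach you would need to actually execute the coprimality/valuation step, but the asymptotic argument is both shorter and complete.
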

\begin{proof}
Utilizing Lemma~\ref{lemma:pitheta2_4_symmetry}, all that remains to show is that in our setting any group that is finite must be of order $4$. To see this, define $\tilde{\delta},\tilde{\varepsilon},\delta,\varepsilon\in\bar{\R}$ such that \begin{align}
    \frac{c(x)}{a(x)}&\stackrel{x\to\infty}{\to}\delta,&      \frac{c(x)}{a(x)}&\stackrel{x\to 0}{\to}\varepsilon,\\
        \frac{\tilde{c}(y)}{\tilde{a}(y)}&\stackrel{y\to \infty}{\to}\tilde{\delta},
        &\frac{\tilde{c}(y)}{\tilde{a}(y)}&\stackrel{y\to 0}{\to}\tilde{\varepsilon},
\end{align}
where $a(x),c(x),\tilde{a}(y),\tilde{c}(y)$ are defined as in Section~\ref{sec:Prelims}. Notice that if $\delta=\varepsilon=0$, then this would imply $p_{1,1}=p_{-1,1}=0$, in which case by Lemmas~\ref{lemma:pitheta2_onesymmetryallsymmetry} and \ref{lemma:pitheta2_4_symmetry} we already know the group to be finite of order $4$. In the same fashion, one sees that if $\delta=\varepsilon=\infty$, $\tilde{\delta}=\tilde{\varepsilon}\in\{0,\infty\}$ then we have a finite group of order $4$. In all other cases, we find that, for sufficiently large values of $(x,y)$, the group behaves like \begin{align}
    (x,y)\mapsto (\delta \bar{x},y)\mapsto (\delta \bar{x},\tilde{\varepsilon}\bar{y}) \mapsto \left(\frac{\varepsilon}{\delta}x,\tilde{\varepsilon}\bar{y}\right)\mapsto \left(\frac{\varepsilon}{\delta}x,\frac{\tilde{\delta}}{\tilde{\varepsilon}},y\right)\mapsto\dots
\end{align}
Consequently, for the group to be finite, both $\frac{\varepsilon}{\delta},\frac{\tilde{\delta}}{\tilde{\varepsilon}}$ must be roots of unity. As they are nonnegative reals, they must therefore be $1$. This condition can then be checked to simplify to $p_{1,1}p_{-1,-1}=p_{1,-1}p_{-1,1}$. \\
We write \begin{align}\label{eq:pitheta2:groupfinite:eq1}
    0=p_{1,1}p_{-1,-1}-p_{1,-1}p_{-1,1}&=p_{1,1}p_{-1,-1}-p_{1,-1}(p_{1,1}+p_{-1,-1}-p_{1,-1})\\
    &=(p_{1,1}-p_{1,-1})(p_{-1,-1}-p_{1,-1}),
\end{align}
where in (\ref{eq:pitheta2:groupfinite:eq1}) we made use of (\ref{eq:pitheta2:condition_pitheta=2}).
One can check by a short computation that the first condition implies a North-South and the second one an East-West symmetry; thus by Lemma~\ref{lemma:pitheta2_4_symmetry} we already know that our group is of order $4$. \end{proof}

\subsection{The finite group case}
We use the same parametrization of the curve $\mathcal{C}:=\{(x,y)\in\overline{\C}^2:K(x,y)=0\}$ as in Section~\ref{sec:parametrization}, \begin{align}
    x(s)&=\frac{(s-s_1)(s-1/s_1)}{(s-s_0)(s-1/s_0)},\\
    y(s)&=\frac{(\rho s-s_3)(\rho s-1/s_3)}{(\rho s-s_2)(\rho s-1/s_2)},
\end{align}
with $s_0,s_1,s_2,s_3$ given by (\ref{eq:guessing_parametrization_s0})--(\ref{eq:guessing_parametrization_s1}). Remember the invariance properties $x(s)=x\left(1/s\right)$ and $y(s)=\left(q/s\right)$ with $q:=e^{2i\theta}=1/\rho^2$, and that the mappings $s\mapsto \frac{1}{s}, s\mapsto \frac{q}{s}$ correspond to the restriction of the group to $\mathcal{C}$, and that we have $\omega(x(s))=s^{\pi/\theta}+s^{-\pi/\theta}$.\\
In the case $\pi/\theta=2$, the above immediately simplifies to $\rho=-i, q=-1$. By Thm.~\ref{thm:pitheta2:groupfinite}, we also know that in this case we have a North-South, or an East-West symmetry. This gives us some particularly nice properties of these models, and allows us to compute polyharmonic functions without the use of decoupling functions. 

\begin{lemma}\label{lemma:pitheta2_finitegroup_properties} 
Suppose we have a non-degenerate model with small steps, zero drift, $\pi/\theta=2$ and finite group. If we have an East-West symmetry, then there is a constant $c$ such that we can write \begin{align}\label{eq:pitheta2:w}
    \omega(x)=c\frac{x}{(1-x)^2},
\end{align}
and the contour $\mathcal{G}$ given by $X_\pm\left([y_1,y_4]\right)$ is the unit circle.\\
In case of a North-South symmetry, a corresponding statement holds true for $y$ and the corresponding conformal mapping $\widehat{\omega}$ instead. 
\end{lemma}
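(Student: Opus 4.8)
The plan is to prove the East-West case; the North-South case then follows verbatim after exchanging $x$ and $y$, so that $\widehat\omega$ and $\mathcal{G}'$ play the roles of $\omega$ and $\mathcal{G}$. I note at the outset that only the assumed symmetry and the hypothesis $\pi/\theta=2$ will actually be used; finiteness of the group enters the lemma merely through Thm~\ref{thm:pitheta2:groupfinite}, which is what guarantees one of the two symmetries in the first place.

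First I would pin down the contour. Under an East-West symmetry $p_{i,j}=p_{-i,j}$, reading off the coefficients of the kernel as in (\ref{eq:kernelnot1}) gives $\tilde c(y)=\sum_j p_{1,j}y^{1-j}=\sum_j p_{-1,j}y^{1-j}=\tilde a(y)$, hence $X_+(y)X_-(y)=\tilde c(y)/\tilde a(y)=1$. On the arc where $\tilde D(y)\le 0$ one also has $X_-(y)=\overline{X_+(y)}$, so $|X_+(y)|^2=X_+(y)X_-(y)=1$; thus the Jordan curve $X_\pm([y_1,1])$ bounding $\mathcal{G}$ lies on the unit circle, and since $0\in\mathcal{G}$ this forces $\partial\mathcal{G}$ to be the whole unit circle and $\mathcal{G}=\{|x|<1\}$. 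That settles the second assertion.

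Next I would determine $\omega$. Since $\pi/\theta=2\in\Z$ the conformal map $\omega$ is rational (see \cite[(3.12)]{Conformal}), and from its construction its only pole is at $x=1$, of order $\pi/\theta=2$ (this is precisely the property used in the proof of Thm~\ref{thm:buildPHF}). From $X_+X_-=1$ together with $\omega(X_+(y))=\omega(X_-(y))$ on $[y_1,1]$ (see (\ref{eq:wInv})) one gets $\omega(x)=\omega(1/x)$ on a non-degenerate arc, hence identically by analytic continuation; evaluating this identity at $x=\infty$ and using $\omega(0)=0$ yields $\omega(\infty)=0$. Writing $\omega(x)=q(x)/(1-x)^2$, the absence of further poles makes $q$ a polynomial, $\omega(\infty)=0$ forces $\deg q\le 1$, and $\omega(0)=0$ forces $q(0)=0$; hence $q(x)=cx$ with $c\ne 0$, i.e. $\omega(x)=\frac{cx}{(1-x)^2}$, which is (\ref{eq:pitheta2:w}).

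The only non-elementary ingredient is the one I am quoting rather than re-deriving, namely that $\pi/\theta=2$ forces $\omega$ to be rational with a single pole, of order $2$, at $x=1$. Given that, the rest is a short kernel computation, the observation $|X_+|=1$ on the boundary arc, and the elementary fact that a Jordan curve contained in the unit circle must equal it; the only point I expect to need a word of care is the compatibility of the two incarnations of $\omega(x)=\omega(1/x)$ — on the kernel curve via $X_\pm$, and as an identity of rational functions — which is immediate by analytic continuation from the arc $X_+([y_1,1])$.
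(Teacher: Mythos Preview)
Your proof is correct and takes a genuinely different route from the paper's. The paper works through the uniformisation $x(s)$ of the kernel curve from Section~\ref{sec:parametrization}: it computes $\frac{x(s)}{(1-x(s))^2}$ explicitly as a Laurent polynomial in $s$, checks that under the relevant symmetry the odd coefficients vanish so that the expression reduces to $a_0(s^2+s^{-2})+a_2$, and then matches this against the known form $\omega(x(s))=s^{\pi/\theta}+s^{-\pi/\theta}$; for the contour it quotes \cite[Thm.~5.3.3]{Book} to get a circle and then argues it must be the unit circle from the invariance of $\omega$. Your argument bypasses the parametrisation entirely: the East--West symmetry gives $\tilde a=\tilde c$ and hence $X_+X_-=1$, from which both the unit-circle contour and the functional equation $\omega(x)=\omega(1/x)$ drop out, and then the pole data of $\omega$ pins it down as $cx/(1-x)^2$. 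Your approach is more self-contained and elementary for this lemma in isolation, needing only the cited fact that $\omega$ is rational with its sole pole of order $\pi/\theta$ at $1$; the paper's approach, while heavier here, reuses machinery already in place for Section~\ref{sec:guessing}. Your remark that the finite-group hypothesis is not used directly is also accurate: by Thm.~\ref{thm:pitheta2:groupfinite} it is equivalent, in the $\pi/\theta=2$ setting, to having one of the two symmetries.
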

\begin{proof}
(\ref{eq:pitheta2:w}) follows from the parametrization, computing \begin{align}\label{eq:pitheta2:wshape}
\frac{x(s)}{(1-x(s))^2}=\frac{a_0s^4+a_1s^3+a_2s^2+a_1s+a_0}{s^2}.
\end{align}
It turns out that we have $a_1=0$ if $s_0s_1=0$, which we can check to be true if and only if we have a North-South symmetry. In this case we can then see that the right-hand side of (\ref{eq:pitheta2:wshape}) is $a_0\left(s^2+s^{-2}\right)+a_2$. As we already know that, up to an additive constant, $\omega(x(s))=s^2+s^{-2}$, we thus have $\omega(x)=\frac{1}{a_0}\frac{x}{(1-x)^2}-a_2$, and seeing as we pick $\omega(x)$ such that $\omega(0)=0$, (\ref{eq:pitheta2:w}) follows.\\
For the statement about the contour, \cite[Thm.~5.3.3]{Book} tells us that it is a circle if $\pi/\theta=2$ (in their notation, $r=0$). However, considering that $\omega(x)$ must take the same values on the upper and lower half of the contour $\mathcal{G}$ by the invariance property $\omega(X_+(y))=\omega(X_-(y))$ and given (\ref{eq:pitheta2:w}), it follows that this circle must be the unit circle. 
\end{proof}

\begin{theorem}\label{thm:pitheta2:finitegroup_PHF}
For any non-degenerate model with small steps, zero drift, $\pi/\theta=2$, finite group, and East-West symmetry, an explicit basis of polyharmonic functions is given by\begin{align}\label{eq:pitheta2:finitegroup:PHF_formula}
    H_n^k(x,y)=\beta^{n-1}\omega(y)^{n-1}\frac{y^{n-1}}{(1-x)^2(1-y)^{2n}}\left[\sum_{j=0}^{k-1}s_{n}(j)\omega\left(X_+\right)^j\omega(x)^{k-j-1}\right],
\end{align}
for some constant $\beta$, where $s_l: \mathbb{N}\to\mathbb{N}$ is defined inductively via $s_1(j)=1, s_{l+1}(j)=\sum_{i=1}^{j+1}s_l(j)$.\\
In particular, this basis has the form \begin{align}
    H_n^k(x,y)=\frac{p_{m,k}}{(1-x)^{2k}(1-y)^{2(m+k-1)}},
\end{align}
for $p_{m,k}$ some polynomial of degree at most $2k+n-3$.\par
In case of a North-South symmetry, the statement holds with $x$ and $y$ reversed.
\end{theorem}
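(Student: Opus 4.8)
# Proof Proposal for Theorem \ref{thm:pitheta2:finitegroup_PHF}

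The plan is to prove the closed form (\ref{eq:pitheta2:finitegroup:PHF_formula}) by induction on $n$, showing at each step that the right-hand side of the boundary value problem (\ref{eq:DecouplingBVP}) vanishes, so that $F\equiv 0$ decouples and the recursion (\ref{eq:buildPHF}) with $F_n^k\equiv 0$ manufactures the next function; the Schauder-basis property then follows from Lemma~\ref{lemma:criterion_AllPHF}. First I would record two facts that do all the work in the $\pi/\theta=2$, East-West-symmetric case. By Lemma~\ref{lemma:pitheta2_finitegroup_properties}, $\omega(x)=c\,x/(1-x)^2$ for a constant $c$ and the domain $\mathcal G$ is the unit disk. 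Comparing the two available expressions for the fundamental harmonic function --- namely $H_1^1(x,y)=\bigl(\omega(x)-\omega(X_+)\bigr)/K(x,y)$ from (\ref{eq:defH1k}) (note $P_1(z)=z$ in both cases of the construction in Thm.~\ref{thm:AllHF}) and the value $H_1^1(x,y)=\mathrm{const}\cdot(1-x)^{-2}(1-y)^{-2}$ recorded at the start of this section --- yields the factorisation
\begin{align}\label{eq:kernelfactored}
K(x,y)=\lambda\,(1-x)^2(1-y)^2\bigl(\omega(x)-\omega(X_+(y))\bigr)
\end{align}
for a nonzero constant $\lambda$, and, setting $x=0$, the identity $\omega(X_+(y))=-\tilde c(y)\big/\bigl(\lambda(1-y)^2\bigr)$, so that $\omega\circ X_+$ is rational in $y$ with denominator $(1-y)^2$.

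The inductive engine is the observation that any $H_n^k$ of the shape (\ref{eq:pitheta2:finitegroup:PHF_formula}) depends on $x$ only through $(1-x)^{-2}$ and powers of $\omega(x)$, so that, using $x/(1-x)^2=\omega(x)/c$, the product $xy\,H_n^k(x,y)$ is a polynomial in $\omega(x)$ whose coefficients are rational functions of $y$ alone. Evaluating at $x\mapsto X_\pm(y)$ and using the invariance $\omega(X_+(y))=\omega(X_-(y))$ from (\ref{eq:wInv}) gives $X_+yH_n^k(X_+,y)=X_-yH_n^k(X_-,y)$; hence the right-hand side of (\ref{eq:DecouplingBVP}) vanishes, $F_n^k\equiv 0$ is a decoupling function of $xyH_n^k$, and one may define $H_{n+1}^k$ by (\ref{eq:buildPHF}) with $F_n^k\equiv 0$, i.e.\ $H_{n+1}^k(x,y)=\bigl(xyH_n^k(x,y)-X_+yH_n^k(X_+,y)\bigr)/K(x,y)$. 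The numerator vanishes at $x=X_\pm(y)$, so by Lemma~\ref{lemma:decoupling_dividekernel} it is a multiple of $K$; thus $H_{n+1}^k$ is rational, and substituting it into the functional equation (\ref{eq:FE}) shows it is $(n+1)$-polyharmonic with $\triangle H_{n+1}^k=H_n^k$ (as in Thm.~\ref{thm:ConstructPHF}).

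It remains to check that this recursion reproduces (\ref{eq:pitheta2:finitegroup:PHF_formula}) at level $n+1$. In the numerator one isolates the $x$-dependent part, a combination of the differences $\omega(x)^m-\omega(X_+)^m$ with $m\le k$; factoring $\omega(x)^m-\omega(X_+)^m=(\omega(x)-\omega(X_+))\sum_{i=0}^{m-1}\omega(x)^i\omega(X_+)^{m-1-i}$ and cancelling the factor $\omega(x)-\omega(X_+)=K\big/\bigl(\lambda(1-x)^2(1-y)^2\bigr)$ from (\ref{eq:kernelfactored}) against the denominator leaves a double sum in $\omega(x)$ and $\omega(X_+)$; re-collecting by powers of $\omega(x)$, the coefficient of $\omega(x)^{k-1-j'}$ turns out to be $\omega(X_+)^{j'}\sum_{j=0}^{j'}s_n(j)$, which is exactly the partial-sum recursion $s_{n+1}(j')=\sum_{i=1}^{j'+1}s_n(i)$, while the $y$-prefactor advances by one level (picking up one more factor assembled from $\beta$, $\omega(y)$, $y$ and $(1-y)^{-2}$, here using the explicit rational forms of $\omega(y)$ and $\omega\circ X_+$ forced by the East-West symmetry and the unit-disk shape of $\mathcal G$). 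The claimed shape $H_n^k=p_{m,k}\big/\bigl((1-x)^{2k}(1-y)^{2(m+k-1)}\bigr)$ and the bound $\deg p_{m,k}\le 2k+n-3$ are then read off by counting the powers of $(1-x)$, $(1-y)$ and of the monomials $x,y$ contributed by $\omega(x)^{k-j-1}$, $\omega(X_+(y))^{j}$ and the prefactor. The North-South case is identical after interchanging $x$ and $y$ (and $\omega$ with $\widehat\omega$). Finally, since we have produced, for each $k$, an $H_1^k$ of the form (\ref{eq:defH1k}) together with an infinite chain satisfying $\triangle H_{n+1}^k=H_n^k$, Lemma~\ref{lemma:criterion_AllPHF} gives that the $(H_m^l)$ are a Schauder basis of $\mathcal H$. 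The main obstacle is precisely this last bookkeeping: verifying that the $y$-prefactor closes up exactly into the form (\ref{eq:pitheta2:finitegroup:PHF_formula}) (which genuinely relies on the precise rational shapes of $\omega$ and $\omega\circ X_+$ available only under East-West symmetry) and pinning down the degree bound, all while keeping the induction arranged so that the ``polynomial in $\omega(x)$ times a function of $y$'' structure --- hence the vanishing of the BVP obstruction --- persists at every level.
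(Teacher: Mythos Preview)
Your proposal is correct and follows essentially the same approach as the paper. Both hinge on the same two ingredients: the explicit rational form $\omega(x)=c\,x/(1-x)^2$ from Lemma~\ref{lemma:pitheta2_finitegroup_properties}, and the invariance $\omega(X_+)=\omega(X_-)$, which together force $X_+yH_n^k(X_+,y)=X_-yH_n^k(X_-,y)$ so that no decoupling is needed and the recursion $H_{n+1}^k=(xyH_n^k-X_+yH_n^k(X_+,y))/K$ can be iterated; the paper packages the key identity as $xyH_1^1=\beta\,\omega(x)\omega(y)$ and then appeals to the algebraic identity $a\sum_{j}c_ja^{k-j-1}b^j-b^k\sum_j c_j=(a-b)\sum_j(\sum_{i\le j}c_i)a^{k-j-1}b^j$ for the $s_l$ recursion, which is exactly your factor-and-recollect step rewritten, while your kernel factorisation $K=\lambda(1-x)^2(1-y)^2(\omega(x)-\omega(X_+))$ is just the same identity solved for $K$.
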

\begin{proof}
To illustrate the idea, let us start computing $H_n^1(x,y)$. We know that, for some constant $\alpha$, we have \begin{align}
H_1^1(x,y)=\alpha\frac{1}{(1-x)^2(1-y)^2}.
\end{align}
Since the group is finite, we can without loss of generality assume that we have an East-West symmetry; otherwise we exchange the roles of $x,y$ in the following. In this case, by Lemma~\ref{lemma:pitheta2_finitegroup_properties}, we can rewrite \begin{align}
    xyH_1^1(x,y)= xy\frac{\omega(x)-\omega(X_+)}{K(x,y)}=\beta \omega(x)\omega(y),
\end{align} 
where again $\beta$ is some constant. As $\omega(X_+)=\omega(X_-)$, we see immediately that we do not need a decoupling function, and instead we can continue via \begin{align}
    H_2^1(x,y)&=\frac{xyH_1^1(x,y)-X_+yH_1^1(X_+,y)}{K(x,y)}\\
    &=\frac{\beta}{K(x,y)}\left[\omega(x)\omega(y)-\omega(X_+)\omega(y)\right]\\
    &=\beta\omega(y)\underbrace{\frac{\omega(x)-\omega(X_+)}{K(x,y)}}_{=H_1^1(x,y)}.
\end{align}
We can continue inductively, noticing that in each step we only gain a factor of $\beta\omega(y)$, and thus obtain \begin{align}
    H_n^1(x,y)&=\beta^{n-1}\omega(y)^{n-1}H_1^1(x,y)\\
    &=\frac{\alpha\tilde{\beta}^ny^{n-1}}{(1-x)^2(1-y)^{2n}}.
\end{align}
If now $k\geq 2$, then we can compute \begin{align}
    H_1^k&=\frac{\omega(x)^n-\omega(X_+)^n}{K(x,y)}\\
    &=\underbrace{\frac{\omega(x)-\omega(X_+)}{K(x,y)}}_{=H_1^1(x,y)}\left[\omega(x)^{n-1}+\dots+\omega(X_+)^{n-1}\right].
\end{align}
The computation now continues in exactly the same fashion as for $k=1$, except one needs to carry along more terms of the form $\omega(x)^a\omega(y)^b$, with their coefficients, which is where (\ref{eq:pitheta2:finitegroup:PHF_formula}) comes from. We have \begin{multline}
    xyH_n^k(x,y)-X_+yH_n^k(X_+,y)\\
    =\beta^m\omega(y)^m\left[\sum_{j=0}^{k-1}\omega(x)_{k-1}\omega(X_+)^j-\omega(X_+)^k\sum_{j=0}^{k-1}s_m(j)\right].
\end{multline} 
Using the algebraic identity \begin{align}
    a\left[\sum_{j=0}^{k-1}c_j a^{k-j-1}b^j\right]-b^n\sum_{j=0}^{k-1}c_j=(a-b)\sum_{j=0}^{k-1}\left(\sum_{i=1}^{j+1}c_i\right)a^{k-j-1}b^j
\end{align}
for $a=\omega(x)$ and $b=\omega\left(X_+\right)$ then yields the statement.
\end{proof}

\subsection{The infinite group case}\label{sec:pitheta2_infinitegroupcase}
In the case of an infinite group, the approach as in the previous section clearly does not work, as it was dependent on the fact that we could write, up to multiplicative constants, $xyH_1^1(x,y)=\frac{xy}{(1-x)^2(1-y)^2}=\omega(x)\omega(y)$. For this, in the finite group case we utilized the special shape of $\omega$ given by Lemma~\ref{lemma:pitheta2_finitegroup_properties}, which is now unavailable. Neither can we use (\ref{eq:DecouplingFormula}) to find a decoupling function and simplify our BVP. However, general theory of these boundary value problems as in \cite{Gakhov} still tells us that a decoupling function should exist, and there are methods to find them. It is therefore only natural to try and see what happens if we want to apply them here. Unfortunately, it will turn out that even in this simple case $\pi/\theta=2$, the resulting functions are rather unwieldy, and will in general not even be algebraic anymore.
\vskip\baselineskip
Suppose from now on that we have an arbitrary non-singular model with small steps, zero drift and $\pi/\theta=2$. We already know that $H_1^1(x,y)=\frac{1}{(x-1)^2(y-1)^2}$ is a harmonic function for such model, and what we want to do is to compute a biharmonic function of $H_1^1(x,y)$.\\
Now let \begin{align}
    Y_\pm(x)=\frac{-b(x)\mp \sqrt{b(x)^2-4a(x)c(x)}}{2a(x)},
\end{align}
as in Section~\ref{sec:Prelims}, and consider the contour $\Gamma$ given by $X_\pm[y_1,1]$. By \cite[Lemma~6.5.1]{Book}, we know that $\Gamma$ is a circle, symmetric with respect to the real axis, which it intersects at $1$ and at some point $-1<p$. We let $c,d$ be the center and radius of $\Gamma$ respectively. Let, again as in Section~\ref{sec:Prelims}, $\mathcal{G}$ be the (finite) domain bounded by $\Gamma$. Via \begin{align}
    r:\quad \mathbb{C}\to\bar{\mathbb{C}}:\quad z\mapsto d^2\frac{1}{z-c}+c,
\end{align}
we can define a rational mapping $r$ such that \begin{enumerate}
    \item $r$ is an involution,
    \item $r$ maps the interior $\mathcal{G}^\circ$ to the exterior $\mathcal{G}^c$ and vice versa,
    \item $r$ corresponds to complex conjugation on $\Gamma$ itself. 
\end{enumerate}
The existence of this rational mapping is the main reason why the following computation turns out to be comparatively simple; if $\pi/\theta$ were not $2$, then $\Gamma$ would not be a circle and things would end up being more complicated.\\
Now define \begin{align}\label{eq:pitheta2:shapeofL_1}
    L(x):=xY_+(x)H_1(x,Y_+(x))-r(x)Y_+(x)H_1(r(x),Y_+(x)).
\end{align}
$L(x)$ describes the value of $xyH_1^1(x,y)-X_+yH_1^1(X_+,y)$ on $\Gamma$: we substitute $Y_+(x)$ for $y$ to be on $\Gamma$ in the first place, and complex conjugation corresponds to switching from one solution of $K(x,Y_+(x))=0$ to the other, thus we have $K(x,Y_+(x))=K(r(x),Y_+(x))=0$. It is also the expression we want to find a decoupling function of; our goal is to find a $\Upsilon$, which is analytic inside $\mathcal{G}\setminus\{1\}$, such that \begin{align}\label{eq:pitheta2:decouplingcondition}
    \Upsilon(x)-\Upsilon(r(x))=L(x),\quad\forall x\in\Gamma.
\end{align}
\begin{lemma}\label{lemma:pitheta2_shapeofL}
We have \begin{align}
\label{eq:pitheta2_infinitegroup_expr_L}
    \alpha L(x)=\frac{\left[(-1+2d+x)\right]\left[p_2(x)+\sqrt{b(x)^2-4a(x)c(x)}\right]}{(x-1)^3}
\end{align}
for some non-zero constant $\alpha$ and a polynomial $p_2(x)$ of degree at most $2$.
\end{lemma}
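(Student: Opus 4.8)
The plan is to compute $L(x)$ explicitly, separating its rational and algebraic ingredients; the point is that the hypotheses $\pi/\theta=2$ and zero drift force exactly the cancellation needed to produce the claimed shape. Since $H_1(x,y)=1/\big((x-1)^2(y-1)^2\big)$, one first factors
\[
L(x)=\frac{Y_+(x)}{(Y_+(x)-1)^2}\left[\frac{x}{(x-1)^2}-\frac{r(x)}{(r(x)-1)^2}\right],
\]
so it suffices to treat the rational bracket and the algebraic factor $Y_+/(Y_+-1)^2$ separately. For the bracket I would use that $1\in\Gamma$, so $(1-c)^2=d^2$; since the two real points of $\Gamma$ are $1$ and $p<1$, they are the endpoints of its horizontal diameter, so $c=1-d$. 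Substituting $r(x)=c+d^2/(x-c)$ and simplifying with $d^2=(c-1)^2$ gives $r(x)-1=(c-1)(x-1)/(x-c)$ and $r(x)=(cx-2c+1)/(x-c)$, and a short computation collapses the bracket (one factor $x-1$ cancels from the numerator) to $-c(-1+2d+x)/\big(d^2(x-1)\big)$; this produces the factor $(-1+2d+x)$ and one of the three powers of $(x-1)^{-1}$.

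For the algebraic factor, write $Y_\pm(x)=\big(-b(x)\mp\sqrt{D(x)}\big)/(2a(x))$ with $D=b^2-4ac$. Rationalising and using $(b+2a)^2-D=4a\,K(x,1)$ with $K(x,1)=a+b+c$, one gets $1/(Y_+-1)=\big(-(b+2a)+\sqrt D\big)/\big(2K(x,1)\big)$, hence
\[
\frac{Y_+}{(Y_+-1)^2}=\frac{1}{Y_+-1}+\frac{1}{(Y_+-1)^2}=\frac{P(x)+\big(c(x)-a(x)\big)\sqrt{D(x)}}{2K(x,1)^2},\qquad P:=-b(a+c)-4ac.
\]
Now three structural identities enter: (i) zero $x$-drift gives $K(x,1)=r_{-1}(x-1)^2$ with $r_{-1}=\sum_j p_{-1,j}>0$; (ii) the relation $\pi/\theta=2$, i.e.\ $(\ref{eq:pitheta2:condition_pitheta=2})$, together with zero $y$-drift gives $c(x)-a(x)=A(x-1)^2$ with $A=p_{-1,1}-p_{-1,-1}$; (iii) substituting $b=K(x,1)-a-c$ into $P$ gives $P=-K(x,1)(a+c)+(a-c)^2=(x-1)^2\big[-r_{-1}(a+c)+A^2(x-1)^2\big]$, a polynomial of degree $\le2$ times $(x-1)^2$. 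After cancellation, $Y_+/(Y_+-1)^2=\big(\tilde p(x)+A\sqrt{D(x)}\big)/\big(2r_{-1}^2(x-1)^2\big)$ with $\deg\tilde p\le2$.

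Multiplying the two factors gives $L(x)=-c\,(-1+2d+x)\big(\tilde p(x)+A\sqrt{D(x)}\big)/\big(2r_{-1}^2d^2(x-1)^3\big)$, so taking $\alpha=-2r_{-1}^2d^2/(cA)$ normalises the coefficient of $\sqrt{D(x)}$ to $1$ and yields the statement with $p_2:=\tilde p/A$ of degree $\le2$. Here $\alpha\neq0$ since $r_{-1},d>0$, $c=1-d\neq0$ (as $p>-1$), and $A\neq0$: indeed $A=0$ would force $c(x)\equiv a(x)$, i.e.\ a North--South symmetry, which is excluded in the infinite-group case by Thm.~\ref{thm:pitheta2:groupfinite}. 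All computations are elementary; the step I would be most careful about is establishing the three structural identities — in particular the divisibility $P(x)=(x-1)^2\tilde p(x)$ and the non-vanishing of $A$ — since these are precisely where $\pi/\theta=2$, zero drift and the infinite-group hypothesis are used, and both the claimed denominator $(x-1)^3$ and the normalisation break down without them.
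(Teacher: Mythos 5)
Your proof is correct and follows essentially the same route as the paper: factor $L$ into the rational bracket (which indeed collapses to $-c(-1+2d+x)/\big(d^2(x-1)\big)$ with $c=1-d$) times $Y_+/(Y_+-1)^2$, and use the two structural identities $a+b+c=K(x,1)=\rho_1(x-1)^2$ (zero drift) and $a-c=\rho(x-1)^2$ (zero drift plus $\pi/\theta=2$) to pull $(x-1)^2$ out of both the rational part and the coefficient of $\sqrt{b^2-4ac}$, exactly as the paper does via $s_1(1)=s_1'(1)=0$. Your explicit check that $\alpha\neq 0$ — identifying that the $\sqrt{D}$-coefficient vanishes precisely for a North--South symmetry, which is excluded here by Thm.~\ref{thm:pitheta2:groupfinite} — is a worthwhile addition that the paper relegates to a footnote.
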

\begin{proof}
We now utilize that we know the shape of $H_1(x,y)$, and rewrite (\ref{eq:pitheta2:shapeofL_1}) as\begin{align}
    \beta L(x)&=\left[\frac{x}{(x-1)^2}-\frac{r(x)}{(r(x)-1)^2}\right]\frac{Y_+}{(Y_+-1)^2}\\
    \label{eq:pitheta2:lemmashapeL:1}
    &=\frac{(d-1) (-1 + 2 d + x)}{d^2 (x-1)}\frac{Y_+}{(Y_+-1)^2},
\end{align}
where $\beta$ is some non-zero multiplicative constant we can ignore in the following. Note here that the factor $(-1+2d+x)$ has a zero at $x=1-2d=p$ (where $p$ was defined to be the second intersection, other than $1$, of $\mathcal{G}$ with the real axis), which will be important later on. In order to simplify $\frac{Y_+}{(Y_+-1)^2}$, seeing as $a(x)+b(x)+c(x)=K(x,1)=\rho_1(x-1)^2$ and $b(x)^2-4a(x)c(x)=(x-1)^2\tilde{p}(x)$ for a constant $\rho_1$ and a quadratic polynomial $\tilde{p}(x)$, we can see the following identities which we will use later on: \begin{align}
\label{eq:pitheta2_infinitegroup_identity1}
    a(1)=c(1)\\
    \label{eq:pitheta2_infinitegroup_identity2}
    a(1)+b(1)+c(1)=K(1,1)=0,\\
    \label{eq:pitheta2_infinitegroup_identity3}
    a'(1)+b'(1)+c'(1)=\left.\frac{\partial}{\partial x}K(x,1)\right|_{x=1},\\
    \label{eq:pitheta2_infinitegroup_identity4}
    \left.\frac{\partial}{\partial x}b(x)^2-4a(x)c(x)\right|_{x=1}=\left.\frac{\partial}{\partial x}\triangle(x)\right|_{x=1}=0.
\end{align} 
A direct simplification yields \begin{align}
    \frac{Y_+}{(Y_+-1)^2}&=\frac{\left[-b(x) + \sqrt{b(x)^2 - 4 a(x) c(x)}\right] \left[2 a(x) + b(x) + \sqrt{b(x)^2 - 4 a(x) c(x)}\right]^2}{8\left[a(x)+b(x)+c(x)\right]^2}\\
    &=\frac{\left[-b(x) + \sqrt{b(x)^2 - 4 a(x) c(x)}\right] \left[2 a(x) + b(x) + \sqrt{b(x)^2 - 4 a(x) c(x)}\right]^2}{8\rho_1^2a(x)(x-1)^4}.
\end{align}
In order to simplify the numerator, we write \begin{align}
    &\left[-b(x) + \sqrt{b(x)^2 - 4 a(x) c(x)}\right] \left[2 a(x) + b(x) + \sqrt{b(x)^2 - 4 a(x) c(x)}\right]^2\\
    \label{eq:pitheta2_infinitegroup_identity0}
    =&
    -4a(x)\left[a(x)b(x)+4a(x)c(x)+b(x)c(x)\right]+a(x)(a(x)-c(x))\sqrt{b(x)^2 - 4 a(x) c(x)}.
\end{align}
Note that the factor $a(x)-c(x)$ in front of the square root vanishes if and only if our walk has a North-South symmetry\footnote{In particular, had we picked a model with East-West, but no North-South symmetry here, then the computation here is much more complicated than necessary -- we could just have swapped the roles of $x$ and $y$, done the same calculations and at this point ended up with a purely rational expression.}, which is why everything stays rational in the finite group case.\\
We want to show that the first summand contains a factor of $(x-1)^2$. To do so, we notice that for $s_1:=a(x)b(x)+4a(x)c(x)+a(x)b(x)$, we have \begin{align}
    s_1(1)=&a(1)b(1)+4a(1)c(1)+b(1)c(1)\\
    &\stackrel{(\ref{eq:pitheta2_infinitegroup_identity1})}{=}2a(1)\left[b(1)+2a(1)\right]\\
    &\stackrel{(\ref{eq:pitheta2_infinitegroup_identity2})}{=}0,
\end{align}
as well as \begin{align}
    s_1'(1)&=a'(1)b(1)+b'(1)a(1)+4a'(1)c(1)+4a(1)c'(1)+b'(1)c(1)+b(1)c'(1)\\
    &=b(1)\left[a'(1)+c'(1)\right]+b'(1)\left[a(1)+c(1)\right]+4\left[a'(1)c(1)+a(1)c'(1)\right]\\
    &\stackrel{(\ref{eq:pitheta2_infinitegroup_identity3})}{=}
    -b(1)b'(1)-b'(1)b(1)+4\left[a'(1)c(1)+a(1)c'(1)\right]\\
    &\stackrel{(\ref{eq:pitheta2_infinitegroup_identity4})}{=}0.
\end{align}
Thus, $-4s_1(x)=(x-1)^2p_1(x)$ for some polynomial $p_1(x)$ of degree at most $2$. To do the same for the second summand of (\ref{eq:pitheta2_infinitegroup_identity0}), we notice that the fact that $a(1)-c(1)=0$ is nothing but (\ref{eq:pitheta2_infinitegroup_identity1}). To see that $a'(1)-c'(1)=0$, let $a(x)=a_0+a_1x+a_2x^2, c(x)=c_0+c_1x+c_2x^2$ (that is, $a_{0/1/2}=p_{-1/0/1,1}, c_{0/1/2}=p_{-1/0/1,-1}$). The expression $a'(1)-c'(1)$ thus simplifies to $2a_2+a_1-2c_2-c_1$. Utilizing that, as $\pi/\theta=2$, we have $a_2+c_0=c_2+a_0$, we can write the latter as $2(a_2-c_2)+a_1-c_1=a_2-c_2+a_0-c_0+a_1-c_1=(a_2+a_1+a_0)-(c_2+c_1+c_0)=0$, since we have zero drift. Therefore, we know that $a(1)-c(1)=a'(1)-c'(1)=0$, and therefore $a(x)-c(x)=\rho (x-1)^2$ (note that $a(x)-c(x)$ is quadratic in $x$). Thus we obtain \begin{align}
    \frac{Y_+}{(Y_+-1)^2}&=\frac{\left[-b(x) + \sqrt{b(x)^2 - 4 a(x) c(x)}\right] \left[2 a(x) + b(x) + \sqrt{b(x)^2 - 4 a(x) c(x)}\right]^2}{8\rho_1^2(x-1)^4}\\
    &=\frac{p_2(x)(x-1)^2+(x-1)^2\sqrt{b(x)^2-4a(x)c(x)}}{\rho_3(x-1)^4}\\
    &=\frac{p_2(x)+\sqrt{b(x)^2-4a(x)c(x)}}{\rho_3(x-1)^2}.
\end{align}
Substituting this into (\ref{eq:pitheta2:lemmashapeL:1}) yields the statement.\end{proof}
We now define \begin{footnotesize}\begin{align}\label{eq:pitheta2_infinite_L1}
    L_1(x)&:=\frac{(-1+2d+x)p_2(x)}{(x-1)^3},\\
    \label{eq:pitheta2_infinite_L2}
    L_2(x)&:=\frac{(-1+2d+x)\sqrt{b(x)^2-4a(x)c(x)}}{(x-1)^3}=\frac{(-1+2d+x)\sqrt{-(x-x_1)(x-x_4)}}{(x-1)^2}.
\end{align}
\end{footnotesize}\noindent
By construction, we have $\left[L_1(x)+L_2(x)\right]=\alpha L(x)$.\\
We would now like to proceed by computing decoupling functions of $L_1,L_2$ separately. For decoupling functions of $L_{1,2}(x)$ to exist, we must have $L_{1,2}(x)+L_{1,2}(r(x))=0$ for $x\in\Gamma$, due to (\ref{eq:pitheta2:decouplingcondition}). Note that $L(x)$ satisfies this condition by construction.\\
The first question to ask here is in which way we define the square root. This depends on the sign of $(x-x_1)(x-x_4)$; in order to utilize our methods later we will want the expression $L_2(x)$ to be continuous on the contour $\Gamma$. Due to \cite[Thm.~5.3.3]{Book}, we know that $x_1\in\mathcal{G}$, $x_4\in\mathcal{G}^c$. We select the branch cut such that the root singularity on $\Gamma$ is canceled out by the factor $(-1+2d+x)=(x-p)$, for $p$ the left intersection of $\Gamma$ with the real axis, i.e. we need to select the branch cut along the axis with the sign of $-(p-x_1)(p-x_4)$.\\
In both cases, there is a section of the contour $\Gamma$ which lies on the side of the branch cut. Therefore, on this section $\Gamma'$ of the contour we have \begin{align*}\sqrt{-(\bar{x}-x_1)(\bar{x}-x_4)}=-\overline{\sqrt{-(x-x_1)(x-x_4)}}.\end{align*} This implies that, on $\Gamma'$,\begin{align}
    L(x)+L(r(x))=L_1(x)+L_1(\overline{x})+L_2(x)+L_2(\overline{x})=\underbrace{L_1(x)+\overline{L_1(x)}}_{\in\R}+\underbrace{L_2(x)-\overline{L_2(x)}}_{\in\C}=0.
\end{align}
Consequently, we know that $L_1(x)+L_1(r(x))$ must be $0$ on $\Gamma'$, and as it is a rational function it must thus be $0$ everywhere. The same goes for $L_2(x)+L_2(r(x))$. 
This means that finding a decoupling function of $L(x)$ can be done in two parts:\begin{enumerate}
    \item We find a decoupling function of the rational function $L_1(x)$,
    \item We find a decoupling function of the non-rational function $L_2(x)$.
\end{enumerate}

\subsubsection*{Decoupling of the (rational) $L_1$}
As we already know that $L_1(x)+L_1(r(x))=0$, this turns out to be rather straightforward: we have \begin{align}
    L_1(x)=\frac{1}{2}\left[L_1(x)-L_1(r(x))\right],
\end{align}
which already gives us a rational decoupling function. One arguably gets a somewhat nicer form by utilizing an ansatz of the form \begin{align}
    L_1(x)=\alpha_3\left[\frac{1}{(x-1)^3}-\frac{1}{(r(x)-1)^3}\right]+\alpha_1\left[\frac{1}{x-1}-\frac{1}{r(x)-1}\right],
\end{align}
as will be done for Example~\ref{sec:pitheta2_infinite_example}.

\subsubsection*{Decoupling of the (irrational) $L_2$}\label{sec:pitheta2_infinite_decouplingnonrational}
Note that the previous approach is problematic here, as the resulting function would have singularities at $x=x_1$ and $x=x_4$, which might be inside of $\mathcal{G}$. Thus we compute a decoupling function via a contour integral, which is also the standard approach given the theory of complex boundary value problems. To utilize the theory as in e.g. \cite{Gakhov}, we need a function which is continuous on $\Gamma$. This is not the case for $L_2$, due to its pole at $x=1$. However, this can easily be remedied by considering instead of $L_2$ the function\begin{align}
    L_3(x):=(x-1)(r(x)-1)L_2(x)=-d\frac{(-1+2d+x)\sqrt{-(x-x_1)(x-x_4)}}{x-(1-d)}.
\end{align}
If we find a decoupling function $\Upsilon_3(x)$ of $L_3$, then $\Upsilon_2(x):=\frac{\Upsilon(x)_3}{(x-1)(r(x)-1)}$ will be a decoupling function of $L_2$, as the denominator is invariant under $x\mapsto r(x)$. However, due to \cite{Gakhov} we already know that such a $\Upsilon_3$ exists, seeing as $L_3(x)$ is continuous and bounded on $\Gamma$ (though not analytic near $x=p$, but this does not matter for us).\\
Since the theory guarantees us the existence of a $\Upsilon_3$, analytic in $\mathcal{G}^\circ$, which decouples $L_3$, we can utilise the same trick as in \cite{FR11}: we write the decoupling property, select a $t\in\mathcal{G}^\circ$, divide by $(x-t)$ and integrate over $\Gamma$ with respect to $x$. The resulting equation then reads \begin{align}\label{eq:pitheta2:irrationaldecoupling_1}
    \int_{\Gamma}\frac{\Upsilon_3(x)}{x-t}\mathrm{d}x-\int_{\Gamma}\frac{\Upsilon_3(r(x))}{x-t}\mathrm{d}x=\int_{\Gamma}\frac{L_3(x)}{x-t}\mathrm{d}x.
\end{align}
The leftmost term is, by Cauchy's integral formula, nothing but $2\pi i \Upsilon_3(t)$, and the rightmost term can be computed. The question is what to do with the middle term. We notice that $r(x)$ is an involution, sending $\Gamma$ to itself (only changing the direction along which $\Gamma$ is traversed), and that $r'(x)=-\frac{d^2}{(x-c)^2}$. Letting now $x=r(y)$, this integral can be written as \begin{align}
    \label{dcf_nonrational_eq1}
    \int_{\Gamma}\frac{\Upsilon_3(r(x))}{x-t}\mathrm{d}x&=d^2\int_\Gamma\frac{\Upsilon_3(y)}{r(y)-t}\frac{1}{(y-c)^2}\mathrm{d}y\\
    &=d^2\int_\Gamma\frac{\Upsilon_3(y)}{(r(y)-t)(y-c)}\frac{1}{y-c}\mathrm{d}y\\
    \label{dcf_nonrational_eq2}
    &=d^2\int_\Gamma\frac{\Upsilon_3(y)}{d^2+(c-t)(y-c)}\frac{1}{y-c}\mathrm{d}y.
\end{align}
From (\ref{dcf_nonrational_eq1}), we see that the only possible pole of the integrand is at $y=c$ (since $t\in\mathcal{G}^\circ$), and from (\ref{dcf_nonrational_eq2}) it follows that this is a simple pole. We can therefore, again by Cauchy, write \begin{align}
    \int_\Gamma\frac{\Upsilon_3(r(x))}{x-t}\mathrm{d}x=2\pi i \Upsilon(c).
\end{align}
Overall, we therefore get \begin{align}
    \Upsilon_3(t)-\Upsilon_3(c)=\frac{1}{2\pi i }\int_\Gamma \frac{L_3(x)}{x-t}\mathrm{d}x.
\end{align}
Noticing that with $\Upsilon_3$ any translation $\Upsilon_3+\operatorname{const}$ is also a solution for any constant, we can without loss of generality assume that $\Upsilon_3(c)=0$, and obtain \begin{align}
    \Upsilon_3(t)=\frac{1}{2\pi i}\int_\Gamma\frac{L_3(x)}{x-t}\mathrm{d}x.
\end{align} 
\textbf{Remark:} We can, unfortunately, not apply calculus of residues to the integral on the right-hand side of (\ref{eq:pitheta2:irrationaldecoupling_1}), since it is not analytic on the branch cut. We will take a closer look at this integral in the following section.

\subsubsection{Asymptotics of \texorpdfstring{$\Upsilon_3(t)$}{}}
The goal of this section is to compute the asymptotics of $\left[t^n\right]\Upsilon_3(t)$\footnote{Here, $\left[t^n\right]$ is the linear operator extracting the $n$-th coefficient of a power series around $0$.}, which will serve to show that $\Upsilon_3(t)$ cannot be an algebraic function. To do so, by standard methods about power series as presented for example in \cite{Flajolet}, we need to know the location of the singularity of $\Upsilon_3(t)$ closest to the origin.\\
Remember that the contour $\Gamma$ is a circle in $\C$, going through $\Gamma$ and intersecting the real axis at a second point $p$. For the asymptotics, we need a bit more information about the exact location of $p$. 

\begin{lemma}
We have
\begin{table}[ht]
\begin{tabular}{lll}
$|p|>1$ & if & $p_{1,1}>p_{1,-1},$ \\
$|p|<1$ & if & $p_{1,1}<p_{1,-1},$ \\
$|p|=1$ & if & $p_{1,1}=p_{1,-1}.$
\end{tabular}
\end{table}
\end{lemma}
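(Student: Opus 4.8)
The plan is to identify $p$ explicitly as the point where the two branches collide. Since $\Gamma=X_\pm([y_1,1])$ and $X_+(y)=\overline{X_-(y)}$ throughout that interval, the curve $\Gamma$ meets the real axis only at its two endpoints, $1=X_\pm(1)$ and $p:=X_+(y_1)=X_-(y_1)$, the latter because $\tilde{D}(y_1)=0$. Hence $p$ is the (real) double root of $K(x,y_1)=\tilde{a}(y_1)x^2+\tilde{b}(y_1)x+\tilde{c}(y_1)$, so that $p=-\tilde{b}(y_1)/(2\tilde{a}(y_1))$ and, by Vieta, $p^2=X_+(y_1)X_-(y_1)=\tilde{c}(y_1)/\tilde{a}(y_1)$; moreover $\tilde{a}(y_1)\tilde{c}(y_1)=\tfrac14\tilde{b}(y_1)^2\ge0$, so $\tilde{a}(y_1)$ and $\tilde{c}(y_1)$ share a sign and $|p|^2=\tilde{c}(y_1)/\tilde{a}(y_1)$. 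The whole statement then reduces to determining the sign of $\tilde{c}(y_1)-\tilde{a}(y_1)$, relative to that common sign.

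Next I would simplify $\tilde{c}(y)-\tilde{a}(y)$ using the two structural hypotheses. The relation (\ref{eq:pitheta2:condition_pitheta=2}), which characterises $\pi/\theta=2$, forces the constant and leading coefficients of the quadratic $\tilde{c}(y)-\tilde{a}(y)$ to coincide, and the zero-drift condition pins down its middle coefficient, so that $\tilde{c}(y)-\tilde{a}(y)=\kappa\,(y-1)^2$ for a constant $\kappa$ which is a single difference of two diagonal step weights and which, using (\ref{eq:pitheta2:condition_pitheta=2}) once more, one identifies with $p_{1,1}-p_{1,-1}$. As $y_1\in[-1,1)$ we have $(y_1-1)^2>0$, so $\tilde{c}(y_1)-\tilde{a}(y_1)$ has the sign of $\kappa$; in particular $\tilde{c}(y_1)=\tilde{a}(y_1)$ exactly when $p_{1,1}=p_{1,-1}$, which by Lemma~\ref{lemma:pitheta2_onesymmetryallsymmetry} is precisely the North--South symmetric case. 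Together with the first step this already yields $|p|=1\iff p_{1,1}=p_{1,-1}$.

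For the two strict cases I would avoid chasing the sign of $\tilde{a}(y_1)$ head-on and instead use the geometry of $\Gamma$. Writing $\Gamma$ as the circle of centre $c$ and radius $d$ (\cite[Lemma~6.5.1]{Book}) and using that the inversion $r(z)=c+d^2/(z-c)$ restricts to complex conjugation on $\Gamma$, the identity $X_-(y)=r(X_+(y))$ for $y\in[y_1,1]$ reads $(X_+(y)-c)(X_-(y)-c)=d^2$, which via Vieta becomes the polynomial identity $\tilde{c}(y)+c\,\tilde{b}(y)+p\,\tilde{a}(y)\equiv0$ with $p=c^2-d^2=2c-1$. Comparing the coefficients of $1,y,y^2$ gives a closed-form rational expression for $p$ in the $p_{i,j}$ that displays $p$ as minus a quotient of nonnegative quantities (so $p\le0$ and $|p|=-p$); inserting the zero-drift and $\pi/\theta=2$ relations collapses this to $|p|>1\iff\kappa>0$ and $|p|<1\iff\kappa<0$, provided one checks that the only degenerate sub-case --- a vanishing denominator, or $p_{0,1}=\tfrac12$ or $p_{0,-1}=\tfrac12$ --- is itself forced into $\kappa=0$ by zero $y$-drift together with $\sum_{i,j}p_{i,j}=1$.

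The main obstacle is this last identification. The quantity $|p|^2=\tilde{c}(y_1)/\tilde{a}(y_1)$ crosses the value $1$ with a direction that flips according to the sign of $\tilde{a}(y_1)$, and the location of $y_1$ in $[-1,1)$ relative to the (non-positive) roots of $\tilde{a}$ is not controlled beforehand, so one has to show that this conspires to make the regime of $|p|$ governed precisely by whether $p_{1,1}-p_{1,-1}$ is positive, zero or negative --- and not by some other comparison of weights. Passing through the closed form for $p$ obtained from the inversion $r$ is what makes this bookkeeping transparent, and it is the step I would spend the most effort getting right.
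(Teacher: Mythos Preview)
Your approach is genuinely different from the paper's, and the difference is instructive.

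The paper does not evaluate anything at $y_1$. Instead it exploits that $\Gamma$ is a \emph{circle} through $1$: if $f(t):=|X_\pm(t)|^2=\tilde c(t)/\tilde a(t)$ has a local maximum (resp.\ minimum) at $t=1$, then $\Gamma$ lies locally inside (resp.\ outside) the unit circle near $1$, and being a circle this forces the same globally, hence $|p|<1$ (resp.\ $|p|>1$). So the whole argument reduces to computing the sign of $f''(1)$, which is elementary since $\tilde a(1)>0$ is automatic. No sign of $\tilde a(y_1)$ is ever needed; this is precisely the obstacle you flagged as the hardest part, and the paper simply avoids it.

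Your route via $|p|^2=\tilde c(y_1)/\tilde a(y_1)$ and the identity $\tilde c(y)-\tilde a(y)=\kappa(y-1)^2$ can be made to work along the lines you sketch. The inversion identity $\tilde c(y)+c\,\tilde b(y)+p\,\tilde a(y)\equiv 0$ is correct and, combined with $c=p_{-1,0}+p\,p_{1,0}$ from the $y^1$-coefficient, gives a closed form $|p|=(p_{-1,-1}+p_{-1,0}p_{0,-1})/(p_{1,-1}+p_{1,0}p_{0,-1})$; from there $|p|>1\iff\kappa(1-2p_{0,-1})>0$, and $1-2p_{0,-1}>0$ by zero $y$-drift plus non-degeneracy. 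However, you have misidentified $\kappa$: reading off the leading coefficient of $\tilde c(y)-\tilde a(y)$ gives $\kappa=p_{-1,1}-p_{1,1}=p_{-1,-1}-p_{1,-1}$, \emph{not} $p_{1,1}-p_{1,-1}$. Carried out correctly, your argument yields $|p|>1\iff p_{1,1}<p_{-1,1}$, which is exactly what the paper's own proof establishes (note its final line compares $p_{1,1}$ with $p_{-1,1}$; the $p_{1,-1}$ in the lemma's display appears to be a typo for $p_{-1,1}$, and your $\kappa$ error is likely an artefact of matching that misprint).

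In short: your strategy is sound but heavier; the paper's second-derivative-at-$1$ trick is the cleaner way to sidestep the sign-of-$\tilde a(y_1)$ issue.
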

\begin{proof}
Let us parametrize the contour and consider the absolute value of $X_0(t)X_1(t)$, $t$ close to $1$. Note again that we have \begin{align}
    f(t):=\left|X_0(t)\right|^2=X_0(t)X_1(t)=2\frac{c(t)}{a(t)}.
\end{align}
As we have zero drift and thus $X_0(1)=X_1(1)=0$, we see that $f'(1)=0$. Now all that remains to do is to check whether at this point we have a local minimum or maximum; i.e. whether we have $f''(1)>0$ or $f''(1)<0$ respectively. Therefore, we can use the explicit forms of $a(x),b(x)$, and in the end we obtain (using, again, that $r=0$) \begin{align}
    f''(1)=-4\frac{p_{1,-1}-p_{-1,-1}}{p_{-1,1}+p_{-1,0}+p_{-1,-1}}.
\end{align}
Since the denominator is certainly $>0$, and since (again, due to the drift being zero) we have $p_{1,-1}-p_{-1,-1}=p_{1,1}-p_{-1,1}$, the statement follows.\\
Note lastly that if we have equality, i.e. $p_{1,1}=p_{-1,1}$, then we already know that we have an East-West symmetry, and that in this case the contour will be the unit circle.
\end{proof}

In the case $|p|=1$, there is no need for all these computations, as the group is finite by Lemma~\ref{lemma:pitheta2_onesymmetryallsymmetry} and Thm.~\ref{thm:pitheta2:groupfinite}, and we can directly compute all polyharmonic functions.
If $|p|>1$, the singularity of $\Upsilon_3$ closest to $0$ (and thus the one determining asymptotic behaviour) is at $x=1$.  In the case $|p|<1$, however, $\Upsilon$ will have exponential growth with base $\frac{1}{p}$. We will now compute the exact shape of the resulting terms in this case. 

\begin{lemma}\label{generalization:lemma2}
Let $d$ be the radius of the circle $X[y_1,1]$, $c$ its center and $p$ its left intersection with the real axis. We then have 
\begin{align}\label{generalization:eq:lemma2}
d=\frac{(1-x_1)(1-x_4)}{2-x_1-x_4},
\end{align}
or, equivalently 
\begin{align}
    p-c(x_1+x_4)+x_1x_4=0.
\end{align}

\end{lemma}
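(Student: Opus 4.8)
The plan is to work with the rational parametrization $s\mapsto(x(s),y(s))$ of the kernel curve from Section~\ref{sec:parametrization}, specialised to $\pi/\theta=2$ (so $\rho=-i$ and $q=-1$), and to pin down the two points at which the circle $\Gamma=X_\pm([y_1,1])$ meets the real axis. Since $\Gamma$ is a circle symmetric about $\R$ (by \cite[Lemma~6.5.1]{Book}), it has exactly two real points; their midpoint is the centre $c$ and half their distance is the radius $d$. One of these points is $x=1$; call the other one $p$. Once $p$ is known, $d=\tfrac12(1-p)$ and $c=\tfrac12(1+p)$, and the two asserted identities become equivalent: substituting $p=2c-1$ into $p-c(x_1+x_4)+x_1x_4=0$ gives $c=\frac{1-x_1x_4}{2-x_1-x_4}$, which is exactly $c=1-d$ for $d=\frac{(1-x_1)(1-x_4)}{2-x_1-x_4}$. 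Thus the whole statement reduces to computing $p$.

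First I would locate the arc of the $s$-sphere that parametrises $\Gamma$. A direct computation shows that the symmetric functions $s_0+1/s_0=\frac{2(2-x_1-x_4)}{x_4-x_1}$ and $s_1+1/s_1=\frac{2(x_1+x_4-2x_1x_4)}{x_4-x_1}$ are real; here one rationalises using the elementary identities $(2-x_1-x_4)^2-4(1-x_1)(1-x_4)=(x_1-x_4)^2$ and $(x_1+x_4-2x_1x_4)^2-4x_1x_4(1-x_1)(1-x_4)=(x_1-x_4)^2$. Hence $x(s)$ and, likewise, $y(s)$ have real coefficients. From this one reads off that $y(s)$ is real precisely for $s$ on the unit circle or on the imaginary axis; on the unit circle $x(s)$ is also real, which is the real section of the curve, whereas the conjugate pairs $X_+(y)=\overline{X_-(y)}$ with $y\in[y_1,1]$ correspond to $s$ on the imaginary axis (with $s=\infty$ adjoined). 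On this locus $x(s)$ takes real values only at $s\in\{0,\infty\}$, where $x=1$, and at $s\in\{i,-i\}$ (note $x(i)=x(1/i)=x(-i)$), where $x=p$.

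The evaluation is then short. Since the two roots of the numerator (resp.\ denominator) of $x(s)$ are reciprocal, $(i-s_j)(i-1/s_j)=i^2-i(s_j+1/s_j)+1=-i(s_j+1/s_j)$, hence
\[
p=x(i)=\frac{(i-s_1)(i-1/s_1)}{(i-s_0)(i-1/s_0)}=\frac{s_1+1/s_1}{s_0+1/s_0}=\frac{x_1+x_4-2x_1x_4}{2-x_1-x_4}.
\]
Therefore $d=\tfrac12(1-p)=\frac{(1-x_1)(1-x_4)}{2-x_1-x_4}$ (note $1-p=\frac{2(1-x_1)(1-x_4)}{2-x_1-x_4}$, which is positive), establishing \eqref{generalization:eq:lemma2}, and the equivalent form $p-c(x_1+x_4)+x_1x_4=0$ follows with $c=1-d=\frac{1-x_1x_4}{2-x_1-x_4}$ as noted above.

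I expect the only genuinely delicate point to be the middle step: identifying exactly which one-dimensional locus in the $s$-sphere parametrises $\Gamma$ and, in particular, checking that its two real images are $x(0)=1$ and $x(\pm i)=p$ and not some other pair. This is essentially bookkeeping with the correspondence between $\Gamma=X_\pm([y_1,1])$ and the parametrization, and can be supplied (or cited) from \cite{Book,FR11}; everything else is routine algebra. As a sanity check, the simple walk has $x_1=3-2\sqrt2$, $x_4=3+2\sqrt2$, giving $p=-1$, $c=0$, $d=1$: the unit circle, as expected.
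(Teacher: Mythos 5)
Your proposal is correct, but it takes a genuinely different route from the paper. The paper's proof stays entirely at the level of the transition probabilities: it computes $p=X(y_1)=-\tilde b(y_1)/(2\tilde a(y_1))$, $x_1x_4$ and $x_1+x_4$ explicitly as rational expressions in the $p_{i,j}$ (using zero drift and $\pi/\theta=2$) and then verifies the identity by direct algebra. You instead work on the uniformizing $s$-sphere: you observe that $\Gamma$ is the $x$-image of the imaginary axis, that its two real points are $x(0)=x(\infty)=1$ and $x(\pm i)=p$, and you evaluate $p=x(i)=\frac{x_1+x_4-2x_1x_4}{2-x_1-x_4}$ from the (real) symmetric functions $s_j+1/s_j$. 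Your route is more conceptual and stays in the natural variables $x_1,x_4$, at the cost of the bookkeeping you correctly flag (which locus in $s$ parametrises $\Gamma$); the paper's route needs no such identification but buries the structure in a probability-level computation. One sentence of yours should be repaired: $y(s)$ does \emph{not} have real coefficients, since with $\rho=-i$ one has $y(s)=\frac{s^2-i\sigma_3 s-1}{s^2-i\sigma_2 s-1}$ with $\sigma_j=s_j+1/s_j\in\R$, so the correct reality structure is $\overline{y(s)}=y(-\bar s)$ (whereas $\overline{x(s)}=x(\bar s)$). This is not cosmetic: if $y$ literally had real coefficients it would be real only on the real axis, contradicting the conclusion you need; it is precisely the twisted relation $\overline{y(s)}=y(-\bar s)$ combined with the fibre description $y(s')=y(s)\Leftrightarrow s'\in\{s,-1/s\}$ that yields reality of $y$ exactly on the imaginary axis and the unit circle. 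With that correction, your identification of the locus, the evaluation $x(\pm i)=p$, the positivity check, and the equivalence of the two stated forms via $c=1-d$, $p=2c-1$ are all sound.
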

\begin{proof}
One can check that, if $\pi/\theta=2$ and we have zero drift, then \begin{align}
    p=X(y_4)=X(y_1)=-\frac{\tilde{b}(y_1)}{2\tilde{a}(y_1)}=\frac{-1 + 2 p_{-1,0}}{1 + 4 p_{1,-1} - 4 p_{-1,-1} - 2 p_{-1,0}}.
\end{align}
Similarly, one sees that \begin{align}
    x_1x_4&=\frac{4 p_{-1,1} p_{-1,-1} - p_{-1,0}^2}{4 p_{-1,1} p_{1,-1} + 4 p_{1,-1} (p_{-1,-1} + p_{-1,0}) - (2 p_{-1,-1} + p_{-1,0})^2},\\
    x_1+x_4&=\frac{2 ((-1 + p_{-1,0}) p_{-1,0} + p_{-1,-1} (-1 + 2 p_{-1,0}) + p_{-1,1} (-1 + 4 p_{-1,-1} + 2 p_{-1,0}))}{
4 p_{-1,1} p_{1,-1} + 4 p_{1,-1} (p_{1,-1} + p_{-1,0}) - (2 p_{-1,-1} + p_{-1,0})^2}.
\end{align}
Putting the above together, (\ref{generalization:eq:lemma2}) follows immediately.
\end{proof}
The above Lemma~\ref{generalization:lemma2} now turns to be very useful for us: the term $L_3(x)$, which we want to integrate, contains a root of the form $\sqrt{-(x-x_1)(x-x_4)}$. We integrate along the contour $\Gamma$, thus we substitute $x\mapsto c+d e^{iu}$. It turns out that we now have, for any two constants $a,b$, \begin{align}
    -x^2-ax-b=-1 - b + a (-1 + d) + 2 d - 
 d e^{iu} (2 + a - 2 d + 2 d \cos{u}).
\end{align}
However, for $a=-(x_1+x_4)$ and $b=x_1x_4$, Lemma~\ref{generalization:lemma2} tells us that the constant term vanishes, i.e. we have \begin{align}
\label{generalization:eq5}
    -(x-x_1)(x-x_4)=-de^{iu}(2-x_1-x_4-2d+2d\cos{u}).
\end{align}
This will allow us to rewrite the square root, since the second factor is strictly real.\\
There are now two cases to consider: $x_4<p$ and $x_4>p$. As the computations are very similar, we will now only look at the former.\\
If $x_4<p$, then we have $-(p-x_1)(p-x_4)>0$, which means that we select the branch cut of the root to be along the positive real axis. Next, we will determine the sign of the real factor under the root. 

\begin{lemma}\label{generalization:lemma3}
    If $x_4<p$, then we have for any $u\in[0,2\pi]$ \begin{align}
        \label{generalization:eq:lemma3}
        2-x_1-x_4+2d(-1+\cos{u})\geq 0.
    \end{align}
\end{lemma}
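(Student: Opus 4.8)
The plan is to reduce Lemma~\ref{generalization:lemma3} to a one-line algebraic identity. First I would minimise the left-hand side of~(\ref{generalization:eq:lemma3}) over $u$. Setting
\begin{align*}
    g(u):=2-x_1-x_4+2d(-1+\cos u),
\end{align*}
note that $g$ is continuous and $2\pi$-periodic, and since $d>0$ the coefficient of $\cos u$ is positive, so the minimum on $[0,2\pi]$ is attained where $\cos u=-1$, i.e. at $u=\pi$. Hence it is enough to prove
\begin{align*}
    g(\pi)=2-x_1-x_4-4d\ \geq\ 0.
\end{align*}

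Next I would fix the sign of $2-x_1-x_4$ from the geometry of $\Gamma$. The circle $\Gamma$ is symmetric about the real axis and meets it exactly at $1$ and $p$, so the real points of the disk $\mathcal{G}$ it bounds form the segment with endpoints $p$ and $1$. Since $x_1\in\mathcal{G}$ is real with $x_1<1$ (recall $x_1\in[-1,1)$), this is only possible if $p<1$ and $p\leq x_1<1$. Together with the hypothesis $x_4<p$ this gives the chain $x_4<p\leq x_1<1$, whence $x_1+x_4<2$, i.e. $2-x_1-x_4>0$. (This is also consistent with $d>0$: in Lemma~\ref{generalization:lemma2} we then have $1-x_1>0$, $1-x_4>1-p>0$ and $2-x_1-x_4>0$.)

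Finally, substituting $d=\frac{(1-x_1)(1-x_4)}{2-x_1-x_4}$ from~(\ref{generalization:eq:lemma2}) and clearing the positive denominator,
\begin{align*}
    2-x_1-x_4-4d=\frac{(2-x_1-x_4)^2-4(1-x_1)(1-x_4)}{2-x_1-x_4}=\frac{(x_1-x_4)^2}{2-x_1-x_4}\ \geq\ 0,
\end{align*}
where the middle step is the identity $(a+b)^2-4ab=(a-b)^2$ with $a=1-x_1$, $b=1-x_4$ (so $a+b=2-x_1-x_4$, $a-b=x_4-x_1$). This establishes~(\ref{generalization:eq:lemma3}). The only point requiring any care is the second paragraph: one must genuinely use $x_4<p$ — and not the complementary case $x_4>p$, handled separately in the text — in order to be in the regime $2-x_1-x_4>0$; once that sign is known the rest is an elementary computation.
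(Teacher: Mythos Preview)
Your proof is correct and follows essentially the same route as the paper: minimise over $u$ to reduce to $2-x_1-x_4-4d\geq 0$, then substitute the formula $d=\frac{(1-x_1)(1-x_4)}{2-x_1-x_4}$ and use $(a+b)^2-4ab=(a-b)^2$ with $a=1-x_1$, $b=1-x_4$. The paper argues the positivity of the denominator simply from the a~priori ranges $x_4<-1<x_1$ (so $a,b>0$), whereas you derive it from the geometry of $\Gamma$ and the hypothesis $x_4<p$; both are fine, and your closing remark that the other regime $x_4>p$ (hence $x_4>1$) flips the sign of $2-x_1-x_4$ is a correct and useful observation.
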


\begin{proof}
First, notice that \begin{align}
\label{generalization:eq6}
    2-x_1-x_4+2d(-1+\cos{u})\geq (1-x_1)+(1-x_4)-4d.
\end{align}
Due to Lemma~\ref{generalization:lemma2}, we know that \begin{align}
    d=\frac{(1-x_1)(1-x_4)}{(1-x_1)+(1-x_4)}.
\end{align}
The right side of (\ref{generalization:eq6}) can therefore be written as \begin{align}
    a+b-4\frac{ab}{a+b},
\end{align}
for $a=1-x_1,b=1-x_4$. Since $x_4<-1<x_1$, we know that $a>b>0$, utilizing homogeneicity it therefore suffices to study the function \begin{align}
    f(a)=1+a-4\frac{a}{1+a}=\frac{(1-a)^2}{1+a}.
\end{align}
As $f(a)$ does not have any zeros for $a>1$, (\ref{generalization:eq:lemma3}) holds.
\end{proof}

Due to our choice of branch cut, we have 
\begin{small}\begin{align}
\sqrt{-(x-x_1)(x-x_4)}&=\sqrt{-de^{iu}(2-x_1-x_4-2d+2d\cos{u})}\\
&=\left|\sqrt{d(2-x_1-x_4-2d+2d\cos{u})}\right|\sqrt{e^{i(u+\pi)}}\\
\label{generalization:eq7}
&=\begin{cases}
    \left|\sqrt{d(2-x_1-x_4-2d+2d\cos{u})}\right|\cdot e^{i(u+\pi)/2},\quad\text{if}\quad u\in[0,\pi),\\
    \left|\sqrt{d(2-x_1-x_4-2d+2d\cos{u})}\right|\cdot e^{i(u-\pi)/2},\quad\text{if} \quad u\in[\pi,2\pi).
\end{cases}
   \end{align}
   \end{small}\noindent

If we now rewrite the integral \begin{align}
    \int_\Gamma \frac{L_3(x)}{x-t}\mathrm{d}x&=d\int_0^{2\pi}\frac{L_3(c+de^{iu})e^{iu}}{c+de^{iu}-t}\mathrm{d}u
\end{align}
using (\ref{generalization:eq7}), it turns out that the only term generating arbitrarily high powers of $t$ is 
\begin{small}\begin{multline}
    \sqrt{2b}(-1 + 2 c - 
    t) \sqrt{(a - b) (2 a (-1 + c) (c - t) + 
     b (1 + 2 (-1 + c) c - 2 c t + t^2))}\\
     \cdot\operatorname{ArcTan}\left(\frac{\sqrt{a-b} (-1 + t)}{\sqrt{
   4 a (-1 + c) (c - t) + 2 b (1 + 2 (-1 + c) c - 2 c t + t^2)}}\right),
\end{multline}
\end{small}
where $a=d(2-x_1-x_4+2d)$, $b=2d^2$. Up to a multiplicative constant, we can write this as \begin{align}\label{generalization:eq8}
    T(t):=(-1+2c-t)\sqrt{(t-x_1)(t-x_4)}\operatorname{ArcTan}\left(\frac{\sqrt{d(2-x_1-x_4-4d)}(t-1)}{2d\sqrt{(t-x_1)(t-x_4)}}\right).
\end{align}
From here on, it remains to do a standard computation, using the theory developed e.g. in \cite{Flajolet}. We know that $\operatorname{ArcTan}{x}$ has its singularities at $x=\pm i$; one can check that this is the case for $t=-p$. The singularities due to the $\sqrt{(t-x_1)(t-x_4)}$ cancel, since the series representation of $x\operatorname{ArcTan}{x}$ contains only even powers of $x$. Therefore, all we need to do is to consider the behaviour of the right-hand side of (\ref{generalization:eq8}) near $t=-p$.\\
One can check that, after considering $\tilde{T}(t):=T(pt)$ in order to shift the pole to $t=1$, we have an asymptotic expansion of the form 
\begin{align}
    T(t)=c_1+c_2\log{|1-t|}+\mathcal{O}(1-t)
\end{align}
for some constants $c_1,c_2$, and thus, according to e.g. \cite[Thm.~VI.3]{Flajolet}, we have \begin{align}
    [t^n]T(t)=\mathcal{O}\left[\frac{1}{p^n}\left(\frac{\log{n}}{n}\right)\right].
\end{align}
In particular, considering the $\log$-terms, we know that $T(t)$, and hence also $\Upsilon(x)$ and the resulting decoupling function, cannot be algebraic.

\subsection{An example with infinite group}\label{sec:pitheta2_infinite_example}
We will now look at how the computations above work in a concrete example with zero drift, $\pi/\theta=2$ and infinite group. As there are no models as famous as say the simple walk or the tandem walk which belong to this group, we pick the same model as in Example~\ref{sec:ansatz_infiniteexample}, which was defined by \begin{align}
 p_{1,0}=p_{0,1}=0,\\
 p_{1,1}=1/4,\\
 p_{1,-1}=p_{0,-1}=p_{-1,0}=p_{-1,1}=1/6,\\
 p_{-1,-1}=1/12.
\end{align}
Here, $\Gamma$ is the circle defined by $|z-1/6|^2=\left(\frac{5}{6}\right)^2$, and therefore complex conjugation on $\Gamma$ corresponds to the M\"obius transform $r: z\mapsto \left(\frac{5}{6}\right)^2\frac{1}{z-1/6}+\frac{1}{6}$. After some computation, one checks that $L_1(x)$ and $L_2(x)$ as defined in (\ref{eq:pitheta2_infinite_L1}) and (\ref{eq:pitheta2_infinite_L2}) now take the form
\begin{align}
    L_1(x)&=\frac{8(2+3x)(7+6x(1+2x))}{125(1-x)^3}\\
    L_2(x)&=\frac{8(2+3x)(x-1)\sqrt{(6x-1)(3+x)}}{125(1-x)^3}.
\end{align}
For a decoupling function of $L_1(x)$, one obtains \begin{align}
  \Upsilon_1(x)=\frac{4}{(x-1)^3}+\frac{96}{25(x-1)}.
\end{align}
For a decoupling function $\Upsilon_2(x)$ of $L_2(x)$, we proceed as in Section~\ref{sec:pitheta2_infinite_decouplingnonrational}. We start by letting $L_3(x)=\frac{-8(2+3x)\sqrt{-1-18x-6x^2}}{25(1-6x)}$, and compute in a first step a decoupling function $\Upsilon_3(x)$ of $L_3(x)$. Using the same integration trick as above, we finally arrive at \begin{align}
    2\pi i\Upsilon_3(t)+2\pi i\Upsilon_3\left(1/6\right)=\int_\Gamma \frac{L_3(x)}{x-t}\mathrm{d}x.
\end{align}
We remember that we can always add a constant to a decoupling function without changing the decoupling property, so we can assume that $\Upsilon_3(1/6)=0$ and thus have 
\begin{align}
    \Upsilon_3(t)=\frac{1}{2\pi i} \int_\Gamma\frac{L_3(x)}{x-t}\mathrm{d}x.
\end{align}
Combining everything until now, we have \begin{align}
    \Upsilon(x)=\frac{\frac{1}{2\pi i}\int_\Gamma\frac{L_3(t)}{t-x}\mathrm{d}t}{(x-1)(r(x)-1)}+\frac{4}{(x-1)^3}-\frac{96}{25(x-1)}.
\end{align} 
Here, the interesting part is clearly the contour integral. Thus, we want to know the asymptotics of the coefficients of \begin{align}
    \Upsilon_3(t)=\frac{1}{2\pi i}\int_\Gamma\frac{(x+2/3)\sqrt{-(1+18x+6x^2)}}{(x-1/6)(x-t)}.
\end{align}
By some computation, one finds that\begin{align}
    \left[t^n\right]\Upsilon_3(t)\sim\left[t^n\right](2+3t)\sqrt{1/3+6t+2t^2}\arctan\left(\frac{t-1}{\sqrt{1/3+6t+2t^2}}\right).
\end{align} 
The singularity closest to $0$ here is at $-\frac{2}{3}$, and utilizing that $\Upsilon_3\left(-\frac{2}{3}z\right)=\mathcal{O}\left(\log\frac{1}{1-z}\right)$, we can apply \cite[Thm.~VI.3]{Flajolet} and have 
\begin{align}
    \left[z^n\right]\Upsilon_3(z)=\left(-\frac{3}{2}\right)^n\left[z^n\right]\Upsilon_3\left(-\frac{2}{3}z\right)=\mathcal{O}\left(\left(-\frac{3}{2}\right)^n\frac{\log n}{n}\right).
\end{align}
As mentioned, we can therefore deduce that $\Upsilon_3(t)$, and therefore also $\Upsilon(t)$, is not algebraic.
\vskip\baselineskip
Accordingly, in case of an infinite group, the existence of a decoupling function as postulated by general theory about boundary value problems does not appear to be very useful in terms of actual computations. If one were to drop the condition that $\pi/\theta=2$ on top of this, then the calculations would yet again get much more complicated, as we were heavily relying on the fact that we can describe complex conjugation on the contour $\Gamma$ via a simple rational transformation, which is only due to the fact that $\Gamma$ is a circle. Seeing as the general method to compute polyharmonic functions as presented in Section~\ref{sec:generalSolution} do not correspond to a continuous solution, it would still be interesting to see if there is a natural way to construct polyharmonic functions in the infinite group case.

\section{Outlook/Open Questions}\label{sec:Outlook}
There are a number of open questions regarding discrete polyharmonic functions. \begin{itemize}
    \item It would be interesting to know if one can in general find asymptotics of the form as in Lemma~\ref{lemma:asymptotics_PHF} for the kind of counting problem stated in the introduction. For the one-dimensional case, this was recently shown in \cite{WachtelPhD}. It might be that there are wider classes of problems where discrete polyharmonic functions appear in the asymptotics as well.
\item Directly tied to this, it would be reasonable to expect that not all discrete polyharmonic functions would appear in a combinatorial context, but only those with a somewhat simple structure. For harmonic functions, for instance, in most combinatorial applications it is only the -- in the zero drift context unique up to multiples -- positive harmonic function which relevant. However, even for harmonic functions, while it is conjectured in \cite{Hung} that the positive harmonic function is always given by $H_1^1(x,y)$, this is to the author's knowledge not yet proven in general. It is also not clear at all whether this notion of positivity, or of combinatorial relevance, can be extended to biharmonic, or polyharmonic functions. Maybe this problem could be tackled by limit properties; as one would for many problems expect the discrete solution, together with the asymptotics, to converge towards to continuous limit. However, this could only serve as a starting point, seeing as a discrete polyharmonic function is not uniquely defined by its scaling limit.
\item  Another natural question would be whether there is a better, or more intuitive way to compute discrete polyharmonic functions in the infinite group case. While the method presented in Section~\ref{sec:generalSolution} works, one could argue that any method which does not give the most simple, rational basis if it exists is probably not ideal. It seems that it would be very difficult, for example, to express the polyharmonic functions (\ref{eq:as_SW_1}), (\ref{eq:as_SW_2}) via those obtained by the general method. Also, it is a bit inconvenient that the scaling limit yields only a formal solution of the continuous equation, of which we cannot perform an inverse Laplace transform.
\item 
The models considered in this article all have small steps and zero drift, but it might be possible to extend them to or find similar theorems for a more general settings, or to consider singular models. For harmonic functions where the condition on the steps being small is relaxed, see for example \cite{Hung2}. In case of a non-zero drift, one frequently obtains so-called $t$-harmonic functions, where the $f(i,j)$ on the right-hand side of (\ref{eq:intro_1}) is multiplied by some constant factor $t$, see e.g. \cite{tharmonic}. It might therefore be of interest to study $t$-polyharmonic functions as well. 
\item Compared to the quarter plane, the study of walks in the octant has turned out to be a lot more complicated, seeing as the resulting functional equations have one more boundary term. It might therefore be ambitious, but ultimately still of interest to try and figure out if one can extend the results of this article to a model with more than $2$ dimensions.

\end{itemize}

\printbibliography[heading=subbibintoc]

\end{document}